\documentclass[reqno]{amsproc}
\usepackage[latin1]{inputenc}
\usepackage{amssymb}
\usepackage{hyperref}
\usepackage{amsmath}
\usepackage{latexsym}
\usepackage{cite}
\usepackage{amssymb}
\usepackage{amsfonts}
\usepackage{amscd}
\usepackage{xcolor}
\usepackage{amstext,amsmath,amssymb,amsfonts}
\newtheorem{theorem}{Theorem}

\newtheorem{corollary}[theorem]{Corollary}

\newtheorem{definition}[theorem]{Definition}
\newtheorem{example}[theorem]{Example}

\newtheorem{lemma}[theorem]{Lemma}

\newtheorem{proposition}[theorem]{Proposition}
\newtheorem{remark}[theorem]{Remark}

\textwidth 149mm
\textheight 225mm
\topmargin -10mm
\oddsidemargin 8mm

\newcommand{\C}{\mathbb{C}}

\newcommand{\A}{\mathcal{A}}

\newcommand{\beq}{\begin{eqnarray}}
\newcommand{\eeq}{\end{eqnarray}}
\newcommand{\beqs}{\begin{eqnarray*}}
\newcommand{\eeqs}{\end{eqnarray*}}
\newcommand{\bpro}{\begin{pro}}
\newcommand{\epro}{\end{pro}}
\newcommand{\blem}{\begin{lem}}
\newcommand{\elem}{\end{lem}}
\newcommand{\bdfn}{\begin{dfn}}
\newcommand{\edfn}{\end{dfn}}
\newcommand{\bcor}{\begin{cor}}
\newcommand{\ecor}{\end{cor}}
\newcommand{\bthm}{\begin{thm}}
\newcommand{\ethm}{\end{thm}}
\newcommand{\bex}{\begin{ex}}
\newcommand{\eex}{\end{ex}}
\newcommand{\brmk}{\begin{rmk}}
\newcommand{\ermk}{\end{rmk}}
\newcommand{\bpr}{\begin{pr}}
\newcommand{\epr}{\end{pr}}
\newcommand{\benum}{\begin{enumerate}}
\newcommand{\eenum}{\end{enumerate}}
\newcommand{\bitem}{\begin{itemize}}
\newcommand{\eitem}{\end{itemize}}

\newcommand{\cqfd}{\hfill{\square}}
\chardef\bslash=`\\
\numberwithin{equation}{section}
\numberwithin{table}{section}
\numberwithin{theorem}{section}
\DeclareMathOperator{\id}{id}

\title[Quadratic and sympletic antiassociative algebras ]{Double constructions of quadratic and sympletic antiassociative algebras \footnote{Preprint: ICMPA-MPA/2015/08 } }

\author{Gb\^ev\`ewou Damien  Houndedji$^\dagger$}
\address[$\dagger$]{University of Abomey-Calavi,
International Chair in Mathematical Physics and Applications,
ICMPA-UNESCO Chair, 072 BP 50, Cotonou, Rep. of Benin}
\email{ houndedjid@gmail.com}

\author{Cyrille Essossolim Haliya$^\ast$}
\address[$\ast$]{University of ...,
}
\email{cyzille19@gmail.com}
\begin{document}
\maketitle

\today

\bigskip
\begin{abstract}
This work addresses some relevant characteristics and properties of $q$-generalized associative algebras and $q$-generalized dendriform algebras such as bimodules, matched pairs. We construct for the special case of $q=-1$ an antiassociative algebra with a decomposition into the direct sum of the underlying vector spaces of another antiassociative algebra and its dual such that both of them are subalgebras and the natural symmetric bilinear form is invariant or the natural antisymmetric bilinear form is sympletic. The former is called a double construction of quadratic antiassociative algebra and the later is a double construction of sympletic antiassociative algebra which is interpreted in terms of antidendrifom algebras. We classify the 2-dimensional antiassociative algebras and thoroughly give some double constructions of quadratic and sympletic antiassociative algebras
  \\
{
{\bf Keywords.}
Antiassociative algebra, antidendriform, sympletic form, $ \mathcal{O} $-operator}\\
{\bf  MSC2010.}  16T25, 05C25, 16S99, 16Z05.
\end{abstract}

\section{Introduction}

Antiassociative algebras first arise in the litterature  specially in \cite{[OK]}. In their paper Okubo and Kamiya gave the essential properties of antiassociative algebras and introduced Jordan-Lie (super) algebras which is  intimately  related  to both Jordan-super and antiassociative  algebras. In 2014, M. Markl and E. Remm in \cite{[MAR]} formulated and proved results concerning Koszulness of operads for n-ary algebras where they focused on the particular case of antiassociative operation, i.e. an operation $(a,b)\to ab$ satisfying $(ab)c+a(bc)=0$ for each $a,b$ and $c$. They showed that the  corresponding operad is not Koszul, hence while the standard cohomology has no sensible meaning, the deformation cohomology coincides with the triple cohomology \cite{[FOX],[FOX1]} and governs deformations of antiassociative algebras.

 Recently, P. Zummanovich  shows that there's a strong link in between antiassociative algebras and Mock-Lie algebras (see \cite{[Zumano]} and \cite{[CAMA]} for more details). 
 
  From the above mentionned litterature, succeding in providing an antiassociative algebraic structure particularly in low dimension may give a good framework for a better description to the above mathematical constructions. 

 In \cite{[C.Bai5]} C. Bai discusses double constructions of Frobenius algebras and antisymetric infinitesimlal bialgebras. Recall that a (symmetric) Frobenius algebra is an associative algebra  with a non-degenerate (symmetric) invariant bilinear form.

  Prior in this work, we will define the $q$-generalized associative algebras and the $q$-generalized dendriform algebras and derive their related algebraic structures such as bimodules and matched pairs. Then, we will discuss the double constructions of quadratic antiassociative algebras the way Bai treated double constructions of frobenius algebras. Furthermore, we will discuss the link between $ \mathcal{O} $-operator and antidendriform algebras. 
  
  An $ \mathcal{O} $-operator associated to a bimodule $(l,r,V)$ of an associative algebra A is a linear map $T:V\to  \mathcal{A} $ satisfying
  \beqs
  T(u)\cdot T(v)=T(l(T(u))v+r(T(v)u)), \quad\quad u,v\in V.
  \eeqs
  The notion of $ \mathcal{O} $-operator was introduced in  \cite{[BGN1]} (such a
structure appeared independently in \cite{[U]} under the name of generalized Rota-Baxter operator)
which is an analogue of the $ \mathcal{O} $-operator defined by Kupershmidt as a natural generalization of the operator form of the classical Yang-Baxter equation (\cite{[KUP]} and a further study in \cite{[Bai1]}). Conversely, the antisymmetric part of an $ \mathcal{O} $-operator satisfies the associative Yang-Baxter equation in a larger associative algebra.

	We will show that from an $ \mathcal{O} $-operator, one can get an antidendriform algebra the same way the dendriform algebra is derived from $ \mathcal{O} $-operator. Dendriform are algebras equiped with an associative product which can be written as a linear combination of nonassociative compositions. They were introduced by Loday with motivation in K-theory and have been studied extensively in several area of mathematics and physics, etc (see \cite{[C.Bai5]} and reference therein for more details). Analogously, we will show that antidendriform algebras are equiped with an antiassociative structure which can be written as a linear combinaition of nonassociative compositions.
	
	Furthermore, we find that there is a compatible antidendriform algebra structure on an antiassociative algebra $ \mathcal{A} $ if and only if there exists an invertible $ \mathcal{O} $-operator of $ \mathcal{A} $, or equivalently, there exists an invertible sympletic form associated to certain suitable bimodule of $ \mathcal{A} $.
	
  The paper is organized as follow. In Sec.2, we define the notion of the $q$-generalized associative algebras and study their related algebraic structures such as bimodules and matched pairs. Sec.3, we give an explicit and systematic study on the double constructions of quadratic antiassociative algebras. In Sect.4, we introduce the notion of $q$-generalized dendriform algebras and give an explicit study on their bimodules and matched pairs. In Sec.5, we introduce the close relations between $ \mathcal{O} $-operators and antidendriform algebras and explicitly study the double construction of sympletic antiassociative algebras. In Sec.6, we investigate the classification of the 2-dimensional antiassociative algebras and, quadratic and sympletic double constructions.
  

\section{$q$-generalized associative algebras}
\subsection{Preliminaries}
We consider two important non degenerate bilinear forms on an antiassociative algebra given as follows.
\begin{enumerate}
\item[1.] A symmetric bilinear form $B(,)$ on an antiassociative algebra $ \mathcal{A} $ is invariant if 
\beq
B(xy,z)=B(x,yz) \quad \forall x,y\in A.
\eeq
\item[2.] A skew-symmetric bilinear form $\omega(,)$ on an antiassociative algebra $ \mathcal{A} $ is said to be sympletic if $\omega$ satisfies the following identity:
\beq
\omega(xy, z) + \omega(yz, x) + \omega(zx, y)=0,
\eeq 
for all $x, y, z\in\A.$
\end{enumerate}
\begin{definition} 
Let "$\cdot$" be a bilinear product in a vector space $\A.$ Suppose that it satisfies the following law:
\beq\label{ant}
(x\cdot y)\cdot z= -x\cdot(y\cdot z).
\eeq
Then, we call the pair $(\A, \cdot)$ an \textbf{antiassociative algebra}. Combining both associative (q=1) and antiassociative (q=-1) cases, any algebra $\A$ satisfying 
\beqs
(x\cdot y)\cdot z= qx\cdot(y\cdot z), q=1, -1
\eeqs
is called a \textbf{$q$-associative algebra}.
\end{definition}
\begin{lemma}\cite{[OK]}\label{Lem1}
Let $(\A, \cdot)$ be an antiassociative algebra. Then any
product involving four or more elements of $(\A, \cdot)$ is identically zero. For example, we have
$$ (x\cdot y)\cdot(z\cdot w)=0$$
for any $x$, $y$, $z$, $w \in V$.

\end{lemma}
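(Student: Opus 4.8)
The statement to prove is that in an antiassociative algebra $(\A,\cdot)$, every product of four or more elements vanishes; as a concrete instance, $(x\cdot y)\cdot(z\cdot w)=0$. The natural approach is to exploit the defining relation $(x\cdot y)\cdot z=-x\cdot(y\cdot z)$ repeatedly, moving parentheses around and tracking signs until the same expression appears with two different signs, forcing it to be zero. The key is that there are only finitely many ways to parenthesize a product of four elements $a\cdot b\cdot c\cdot d$ (five of them), and antiassociativity relates them all up to sign; the subtlety is that applying the relation at different internal positions can produce $+$ and $-$ versions of the \emph{same} bracketing.

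First I would show that the ``left-combed'' product $((a\cdot b)\cdot c)\cdot d$ is zero. Apply antiassociativity to the outer pair, treating $(a\cdot b)$ as the first slot: $((a\cdot b)\cdot c)\cdot d=-(a\cdot b)\cdot(c\cdot d)$. Separately, apply antiassociativity to the inner triple $a\cdot b\cdot c$ inside the outermost product: $((a\cdot b)\cdot c)\cdot d=(-a\cdot(b\cdot c))\cdot d=-(a\cdot(b\cdot c))\cdot d=-(-a\cdot((b\cdot c)\cdot d))=a\cdot((b\cdot c)\cdot d)$. Now reduce $(b\cdot c)\cdot d=-b\cdot(c\cdot d)$, giving $((a\cdot b)\cdot c)\cdot d=-a\cdot(b\cdot(c\cdot d))$. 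But also $-(a\cdot b)\cdot(c\cdot d)=a\cdot(b\cdot(c\cdot d))$ by antiassociativity with $u=a$, $v=b$, $w=c\cdot d$. Hence $((a\cdot b)\cdot c)\cdot d$ equals both $-a\cdot(b\cdot(c\cdot d))$ and $+a\cdot(b\cdot(c\cdot d))$, forcing $2\,((a\cdot b)\cdot c)\cdot d=0$. If the ground field (or ring) has characteristic $\neq 2$ this already gives the claim; for the general statement one needs a sign cancellation that does not divide by $2$, so I would instead chain the relations to land on the \emph{same} bracketing with opposite signs directly — e.g. show $(a\cdot b)\cdot(c\cdot d)=-(a\cdot b)\cdot(c\cdot d)$ is not automatic, so one argues that $(a\cdot b)\cdot(c\cdot d) = -a\cdot(b\cdot(c\cdot d))$ on one hand and, regrouping the other way, $(a\cdot b)\cdot(c\cdot d)=((a\cdot b)\cdot c)\cdot(-d)$... which again routes through the left-combed term.

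The cleanest route avoiding any division is: from $(x\cdot y)\cdot z=-x\cdot(y\cdot z)$ applied twice, $(x\cdot y)\cdot(z\cdot w)=-x\cdot\big(y\cdot(z\cdot w)\big)$, and applying it inside on $y\cdot(z\cdot w)$ — rewriting $y\cdot(z\cdot w)=-(y\cdot z)\cdot w$ — yields $(x\cdot y)\cdot(z\cdot w)=x\cdot\big((y\cdot z)\cdot w\big)$. On the other hand, grouping $(x\cdot y)\cdot(z\cdot w)$ as a triple $(x\cdot y),\,z,\,w$ with the first slot being $x\cdot y$: antiassociativity gives $\big((x\cdot y)\cdot z\big)\cdot w=-(x\cdot y)\cdot(z\cdot w)$, and the left side is $\big(-x\cdot(y\cdot z)\big)\cdot w=-\big(x\cdot(y\cdot z)\big)\cdot w=x\cdot\big((y\cdot z)\cdot w\big)$, using antiassociativity once more. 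Comparing, $-(x\cdot y)\cdot(z\cdot w)=x\cdot\big((y\cdot z)\cdot w\big)=(x\cdot y)\cdot(z\cdot w)$, hence $(x\cdot y)\cdot(z\cdot w)=0$ with no characteristic assumption. The remaining four parenthesizations of a four-fold product each reduce, by a single application of antiassociativity at an internal node, to a two-fold product of the shape ``(something)$\cdot$(something)'' with at least one factor a product of two elements, and then collapse to $0$ by what was just shown; I would spell out one such reduction, say $a\cdot(b\cdot(c\cdot d))=-(a\cdot b)\cdot(c\cdot d)=0$, and note the rest are analogous. An induction on the number of factors then finishes: any product of $n\geq 4$ elements contains, after fully parenthesizing, an innermost four-fold subproduct (or reduces via antiassociativity to one), which vanishes.

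**Main obstacle.** The only real care needed is the sign bookkeeping in the chain of rewrites and making sure the final cancellation is of the form ``$E=-E$'' rather than ``$2E=0$'', so that the result holds over an arbitrary commutative ring of scalars as stated; this is exactly what the two-bracketing comparison above is arranged to achieve. Once the flagship identity $(x\cdot y)\cdot(z\cdot w)=0$ is in hand, everything else is routine reduction and an easy induction.
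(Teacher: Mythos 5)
Your central computation is correct and is essentially the same argument as the paper's: both proofs reduce $(x\cdot y)\cdot(z\cdot w)$ in two different ways to expressions of the form $\pm x\cdot((y\cdot z)\cdot w)$ (the paper writes these as $(x\cdot t)\cdot w$ and $x\cdot(t\cdot w)$ with $t=y\cdot z$), compare them, and conclude $2\,(x\cdot y)\cdot(z\cdot w)=0$.

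There is, however, one genuine error in your reasoning, namely the claim that arranging the cancellation as ``$E=-E$'' rather than ``$2E=0$'' lets you avoid the characteristic assumption and work over an arbitrary commutative ring. These two equations are literally the same: $E=-E$ \emph{is} $2E=0$, and it implies $E=0$ only when $2$ is invertible (or at least not a zero divisor). Your two displayed conclusions, $(x\cdot y)\cdot(z\cdot w)=x\cdot((y\cdot z)\cdot w)$ and $-(x\cdot y)\cdot(z\cdot w)=x\cdot((y\cdot z)\cdot w)$, add up to exactly $2\,(x\cdot y)\cdot(z\cdot w)=0$, which is the same obstruction you flagged in your first attempt. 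Indeed no rearrangement of signs can do better, because the lemma is \emph{false} in characteristic $2$: there the identity $(x\cdot y)\cdot z=-x\cdot(y\cdot z)$ coincides with associativity, so every associative algebra is antiassociative and four-fold products certainly need not vanish. The paper quietly divides by $2$ as well (and later declares $\car K\neq 2,3$), so your proof of the statement as intended is fine; only the purported strengthening to arbitrary scalars must be dropped. The remaining reductions of the other parenthesizations and the induction on the number of factors are routine and correct as sketched.
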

\begin{proof}
For simplicity, set $u=x\cdot y$, $v=z\cdot w$ and $t = y\cdot z$. We then compute
$$(x\cdot y)\cdot(z\cdot w)=u\cdot (z\cdot w)=-(u\cdot z)\cdot w=-[(x\cdot y)\cdot z]\cdot w=[x\cdot(y\cdot z)]\cdot w=(x\cdot t)\cdot w$$
and
$$(x\cdot y)\cdot(z\cdot w)=(x\cdot y)\cdot v=-x\cdot(y\cdot v)=-x\cdot[y\cdot(z\cdot w)]=x\cdot[(y\cdot z)\cdot w]=x\cdot (t\cdot w).$$
Adding both of them gives
$$2(x\cdot y)\cdot(z\cdot w)=(x\cdot t)\cdot w+x\cdot(t\cdot w)=0$$
which implies that
$$2\neq 0\quad and \quad (x\cdot y)\cdot(z\cdot w)=0.$$
It's also clear that 
$$[(x\cdot y)\cdot z]\cdot w=0\quad and \quad x\cdot[(y\cdot z)\cdot w]=0.$$
\end{proof}
\begin{corollary}\cite{[OK]}
Any antiassociative algebra cannot have idempotent element. In particular, it cannot possess the unit element.
\end{corollary}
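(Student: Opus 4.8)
The plan is to derive the conclusion straight from the antiassociativity law \eqref{ant} by feeding it a single repeated element. Suppose, for contradiction, that $e\in\A$ is an idempotent, i.e. $e\neq 0$ and $e\cdot e=e$. Putting $x=y=z=e$ in \eqref{ant} yields $(e\cdot e)\cdot e=-\,e\cdot(e\cdot e)$; simplifying each side with the relation $e\cdot e=e$ turns this into $e=-e$, hence $2e=0$.

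Then I would appeal to the same arithmetic hypothesis on the ground field that is already used in the proof of Lemma \ref{Lem1}, namely that $2\neq 0$ (equivalently $\car\neq 2$). Under this hypothesis $2e=0$ forces $e=0$, contradicting $e\neq 0$. This shows that no nonzero idempotent can exist in $\A$.

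For the assertion about the unit, I would observe that a unit element $1$ automatically satisfies $1\cdot 1=1$, so it is in particular an idempotent; by the previous paragraph $1=0$, whence $\A$ is the zero algebra and carries no genuine unit. Hence a nonzero antiassociative algebra is never unital.

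There is essentially no obstacle here: the entire content is the one-line substitution $x=y=z=e$ into \eqref{ant}. The only subtlety worth flagging is that the statement silently presupposes $\car\neq 2$ — exactly the setting in which Lemma \ref{Lem1} was established — since over a field of characteristic $2$ associativity and antiassociativity coincide and idempotents (and units) certainly do exist.
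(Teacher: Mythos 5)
Your proof is correct, but it follows a different route from the paper's. The paper deduces the corollary from Lemma \ref{Lem1}: setting $x=y=z=w=e$ there gives $e^{2}\cdot e^{2}=0$, and substituting $e^{2}=e$ yields $e\cdot e=0$, hence $e=e^{2}=0$. You instead substitute $x=y=z=e$ directly into the defining identity \eqref{ant}, obtaining $e=-e$ and thus $2e=0$. Both arguments ultimately lean on the hypothesis $2\neq 0$ — yours explicitly at the final step, the paper's inside the proof of Lemma \ref{Lem1} (where one divides by $2$ to conclude that four-fold products vanish) — so neither is more general in that respect, and your closing remark that the statement fails in characteristic $2$ (where antiassociativity degenerates to associativity) is a worthwhile observation that the paper leaves implicit. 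Your version is slightly more self-contained, since it does not route through the four-element vanishing lemma; the paper's version has the mild advantage of reusing a result it has already established and of not needing to invoke the characteristic assumption a second time. The treatment of the unit element as a special case of an idempotent is the same in both.
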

\begin{proof}
Setting $x=y=z=w=e$ in Lemma\ref{Lem1} we obtain $e^2\cdot e^2=0$. If we consider $e^2=e$ then this leads to $e^2=0$ implies that $e=0$.
\end{proof}
Let's assume the field $K$ is of characteristic $\neq2,3$.
\begin{definition}\cite{[Zumano]}
An algebra $(\A, \diamond)$ over $K$ is called mock Lie if it is commutative:
\beq
x\diamond y=y\diamond x,
\eeq
and satisfies the Jacobi identity:
\beq
(x\diamond y)\diamond z+(z\diamond x)\diamond y+(y\diamond z)\diamond x=0
\eeq
for any $x$,$y$,$z\in \A$.
\end{definition}
\begin{theorem}\cite{[Zumano]}
Given an antiassociative algebra $(\A,\cdot)$, the new algebra $\A^{\dagger}$ with multiplication give by the "anticommutator"
\beqs
a\diamond b=\frac{1}{2}\left(a\cdot b+b\cdot a\right),
\eeqs
is a mock-Lie algebra.
\end{theorem}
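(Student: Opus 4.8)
The claim is that if $(\A,\cdot)$ is an antiassociative algebra and we define $a\diamond b=\tfrac12(a\cdot b+b\cdot a)$, then $(\A^\dagger,\diamond)$ is a mock-Lie algebra. The plan is to verify the two defining axioms of a mock-Lie algebra directly: commutativity of $\diamond$, and the Jacobi identity $(x\diamond y)\diamond z+(z\diamond x)\diamond y+(y\diamond z)\diamond x=0$. Commutativity is immediate from the symmetry of the anticommutator, so the entire content is in the Jacobi identity.

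For the Jacobi identity, the first step is to expand $(x\diamond y)\diamond z$ using the definition: it equals $\tfrac14\big((x\cdot y)\cdot z+(y\cdot x)\cdot z+z\cdot(x\cdot y)+z\cdot(y\cdot x)\big)$. Doing the same for the other two cyclic terms produces a sum of twelve triple products (with various bracketings and orderings), each with coefficient $\tfrac14$. The key step is then to apply the antiassociativity law (\ref{ant}) to rewrite every right-bracketed product as a left-bracketed one, i.e. $x\cdot(y\cdot z)=-(x\cdot y)\cdot z$, so that all twelve terms become left-bracketed triple products of the form $\pm(a\cdot b)\cdot c$ where $(a,b,c)$ ranges over orderings of $(x,y,z)$. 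After collecting, one should find that the terms cancel in pairs: each left-bracketed product $(a\cdot b)\cdot c$ arising with a plus sign from one cyclic term is matched by the same product arising with a minus sign (after one application of antiassociativity) from another cyclic term. I expect the six distinct orderings of $x,y,z$ each to appear once with $+$ and once with $-$, giving total $0$.

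The main obstacle — really the only place care is needed — is the bookkeeping: one must track signs correctly through each application of antiassociativity and make sure the pairing of the twelve terms is exhausted with nothing left over. A clean way to organize this is to note that, by antiassociativity, for any permutation the quantity $(a\cdot b)\cdot c$ depends only on the ordered triple, and to tabulate the coefficient of each of the six ordered triples $(x\cdot y)\cdot z,\ (x\cdot z)\cdot y,\ (y\cdot x)\cdot z,\ (y\cdot z)\cdot x,\ (z\cdot x)\cdot y,\ (z\cdot y)\cdot x$ in the fully-expanded sum; each coefficient should come out to $0$. (One could also invoke Lemma~\ref{Lem1} to discard any accidental higher products, but in fact none arise here since every term is a genuine triple product.) Since $\car K\neq 2$, dividing by the overall factor is harmless, and the identity holds. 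This completes the verification that $(\A^\dagger,\diamond)$ is mock-Lie.
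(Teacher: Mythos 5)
Your proposal is correct and follows essentially the same route as the paper: expand each cyclic term of the Jacobi identity into four triple products, use antiassociativity to rewrite every right-bracketed product as a (signed) left-bracketed one, and observe that each of the six ordered left-bracketed products appears once with coefficient $+\tfrac14$ and once with $-\tfrac14$, so the sum vanishes. The sign bookkeeping you flag as the only delicate point does indeed work out exactly as you predict.
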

\begin{proof}
Let $x,y\in \A^{\dagger}$. Then
\beqs
x\diamond y=\frac{1}{2}(x\cdot y+ y\cdot x)
\eeqs
and
\beqs
y\diamond x=\frac{1}{2}(y\cdot x+ x\cdot y).
\eeqs
Therefore $x\diamond y -y\diamond x=0$ leadto the commutativity of $\diamond$.
On the other hand $\forall x,y,z\in \A^{\dagger}$
\begin{align*}
(x\diamond y)\diamond z&=\left(\frac{1}{2}(x\cdot y+y\cdot x)\right)\diamond z,\\
&=\frac{1}{4}(x\cdot y+y\cdot x)\cdot z+\frac{1}{4}z\cdot(x\cdot y+y\cdot x),\\
&=\frac{1}{4}(x\cdot y)\cdot z+\frac{1}{4}(y\cdot x)\cdot z+\frac{1}{4}z\cdot(x\cdot y)+\frac{1}{4}z\cdot(y\cdot x),\\
&=\frac{1}{4}(x\cdot y)\cdot z+\frac{1}{4}(y\cdot x)\cdot z-(z\cdot x)\cdot y-(z\cdot y)\cdot x\quad antiassociativity\,\, of\,\,\cdot. 
\end{align*}
Similarly,
\begin{align*}
(y\diamond z)\diamond x&=\frac{1}{4}(y\cdot z)\cdot x+\frac{1}{4}(z\cdot y)\cdot x+\frac{1}{4}x\cdot(y\cdot z)+\frac{1}{4}x\cdot(z\cdot y),\\
&=\frac{1}{4}(y\cdot z)\cdot x+\frac{1}{4}(z\cdot y)\cdot x-\frac{1}{4}(x\cdot y)\cdot z+\frac{1}{4}x\cdot(z\cdot y),\quad antiassociativity\,\, of\,\,\cdot
\end{align*}
and
\begin{align*}
(z\diamond x)\diamond y&=\frac{1}{4}(z\cdot x)\cdot y+\frac{1}{4}(x\cdot z)\cdot y+\frac{1}{4}y\cdot(z\cdot x)+\frac{1}{4}y\cdot(x\cdot z),\\
&=\frac{1}{4}(z\cdot x)\cdot y-\frac{1}{4}x\cdot( z\cdot y)-\frac{1}{4}(y\cdot z)\cdot x-\frac{1}{4}(y\cdot x)\cdot z, \quad antiassociativity\,\, of\,\,\cdot.
\end{align*}
Thus, we have 
\begin{align*}
(x\diamond y)\diamond z+(y\diamond z)\diamond x+(z\diamond x)\diamond y&=\frac{1}{4}(x\cdot y)\cdot z+\frac{1}{4}(y\cdot x)\cdot z-\frac{1}{4}(z\cdot x)\cdot y-\frac{1}{4}(z\cdot y)\cdot x\\
&+\frac{1}{4}(y\cdot z)\cdot x+\frac{1}{4}(z\cdot y)\cdot x-\frac{1}{4}(x\cdot y)\cdot z+\frac{1}{4}x\cdot(z\cdot y)\\
&+\frac{1}{4}(z\cdot x)\cdot y-\frac{1}{4}x\cdot( z\cdot y)-\frac{1}{4}(y\cdot z)\cdot x-\frac{1}{4}(y\cdot x)\cdot z,\\
 &=0,
\end{align*}
which proves the Jacobi identity.
\end{proof}
Now, let us give a generalized definition.
\begin{definition} 
Let $(\A, \cdot)$ be an algebra over field $\mathcal{K}.$ $(\A, \cdot)$ is called \textbf{$q$-generalized associative algebra} when it satisfies the following law:
\beq
(x\cdot y)\cdot z= qx\cdot(y\cdot z), q\in\mathcal{K}-\lbrace 0\rbrace.
\eeq
\end{definition}
\begin{example}
Let $(\A, \circ)$ be a Zinbiel algebra, that is for all $x, y, z\in\A,$ we have
\beqs
(x\circ y)\circ z= x\circ(y\circ z) + x\circ(z\circ y).
\eeqs
If "$\circ$" is commutative, we have
\beqs
(x\circ y)\circ z= x\circ(y\circ z) + x\circ(y\circ z)=2x\circ(y\circ z)
\eeqs
Hence, a commutative Zinbiel algebra is a $2$-associative algebra.
 \end{example}
 \subsection{Bimodules and matched pairs of $q$-generalized associative algebras}
\begin{definition}
Let $\mathcal{A}$ be a $q$-generalized associative algebra and let V be a vector space. Let $ l, r ,  \mathcal{A} \rightarrow gl(V) $ be two linear maps. $ V $ (or the pair $(l,r)$, or $ (l, r, V) $) is called a \textit{bimodule} of $  \mathcal{A} $ if 
\begin{center}
$ l(xy)v =ql(x)l(y)v, r(xy)v =q^{-1}r(y)r(x)v, l(x)r(y)v =q^{-1}r(y)l(x)v$
\end{center}
for all $ x, y \in  \mathcal{A}, v \in V $.
\end{definition}

\begin{remark}
Let $\mathcal{A}$ be a $q$-generalized associative algebra and $(l, r, V)$ the bimodule of $\A$ 
\begin{itemize}
\item For the particular case of $q=1$,   $ (l, r, V) $) is  a bimodule of associative algebra $\A$ ie
\beq
l(xy)v =l(x)l(y)v, r(xy)v =r(y)r(x)v, l(x)r(y)v =r(y)l(x)v,\quad \forall x,y\in \A,v\in V,
\eeq
which is well known in the litterature.
\item When $q=-1$,  $ (l, r, V) $) is  a bimodule of antiassociative algebra $\A$ ie 
\beq
l(xy)v =-l(x)l(y)v, r(xy)v =-r(y)r(x)v, l(x)r(y)v =-r(y)l(x)v
\eeq
for all $x,y\in \A,v\in V$.
\item When $q=2$, $ (l, r, V) $) is a bimodule of a commutative Zinbiel algebra of $\A$ ie
\beq
l(xy)v =2l(x)l(y)v, r(xy)v =\frac{1}{2}r(y)r(x)v, l(x)r(y)v =\frac{1}{2}r(y)l(x)v,\quad \forall x,y\in\A,v\in V.
\eeq
\end{itemize}
\end{remark}
\begin{proposition}
$ (l, r, V) $ is a bimodule of a $q$-generalized associative algebra $\mathcal{A}$ if and only if the direct sum $\mathcal{A}\oplus V$ of vectors spaces is turned into a $q$-generalized associative algebra  by defining multiplication in $  \mathcal{A} \oplus V $ by 
\beqs
(x + a) \ast (y + b) = x \cdot y + (l(x)b + r(y)a)
\eeqs
for all $ x, y \in  \mathcal{A}, a, b\in V$.
\end{proposition}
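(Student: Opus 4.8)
The plan is to verify directly that the bilinear product $\ast$ on $\A\oplus V$ satisfies the defining law $(X\ast Y)\ast Z = q\, X\ast(Y\ast Z)$ of a $q$-generalized associative algebra, and that this forces exactly the three bimodule axioms, giving both implications at once. Write a general element of $\A\oplus V$ as $X=x+a$, $Y=y+b$, $Z=z+c$ with $x,y,z\in\A$ and $a,b,c\in V$. First I would expand $(X\ast Y)\ast Z$: since $X\ast Y = x\cdot y + (l(x)b+r(y)a)$, applying $\ast$ again with $Z$ gives
\begin{align*}
(X\ast Y)\ast Z &= (x\cdot y)\cdot z + \bigl(l(x\cdot y)c + r(z)\bigl(l(x)b+r(y)a\bigr)\bigr)\\
&= (x\cdot y)\cdot z + l(xy)c + r(z)l(x)b + r(z)r(y)a.
\end{align*}
Symmetrically, $Y\ast Z = y\cdot z + (l(y)c + r(z)b)$, so
\begin{align*}
q\,X\ast(Y\ast Z) &= q\,x\cdot(y\cdot z) + q\bigl(l(x)\bigl(l(y)c+r(z)b\bigr) + r(y\cdot z)a\bigr)\\
&= q\,x\cdot(y\cdot z) + q\,l(x)l(y)c + q\,l(x)r(z)b + q\,r(yz)a.
\end{align*}

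Next I would compare the two expressions componentwise. In the $\A$-component, equality $(x\cdot y)\cdot z = q\,x\cdot(y\cdot z)$ holds precisely because $\A$ is itself a $q$-generalized associative algebra, so that part is automatic. In the $V$-component, matching the coefficients of $a$, $b$, and $c$ (which range over all of $V$, and for arbitrary independent choices of $x,y,z$) yields exactly
\[
l(xy)c = q\,l(x)l(y)c,\qquad r(xy)a = q^{-1}r(y)r(x)a,\qquad l(x)r(z)b = q^{-1}r(z)l(x)b,
\]
where the second identity comes from rewriting $r(z)r(y)a = q\,r(yz)a$ (note the $r$-axiom governs the term attached to $a$, with the roles of $y,z$ as in the product $y\cdot z$; relabelling gives the stated form) and the third from $r(z)l(x)b = q\,l(x)r(z)b$. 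These are precisely the three conditions in the definition of a bimodule of a $q$-generalized associative algebra. Conversely, if the bimodule axioms hold, reading the computation backwards shows $\ast$ satisfies the $q$-associative law, and bilinearity of $\ast$ is clear from its formula.

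The only genuinely delicate point is bookkeeping: one must be careful that the isolation of the coefficient of each of $a,b,c$ is legitimate, i.e. that the three scalar identities are not merely necessary in aggregate but individually. This is justified by choosing, for instance, $b=c=0$ to isolate the $r(xy)a = q^{-1}r(y)r(x)a$ relation, then $a=c=0$ for the mixed relation, then $a=b=0$ for the $l$-relation; and within each, letting $x,y,z$ vary freely over $\A$. A secondary subtlety is the index placement in the $r$-axiom: the term $r(z)r(y)a$ in $(X\ast Y)\ast Z$ must be matched with $q\,r(y\cdot z)a$, so one should double-check that the definition's convention $r(xy)v = q^{-1}r(y)r(x)v$ reproduces exactly $r(yz)a = q^{-1}r(z)r(y)a$ under the substitution $x\mapsto y,\ y\mapsto z$ — it does. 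With those checks in place the proposition follows, and I would present it as a short direct computation followed by the componentwise comparison. \cqfd
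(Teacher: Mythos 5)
Your proof is correct and follows essentially the same route as the paper: expand $(X\ast Y)\ast Z$ and $q\,X\ast(Y\ast Z)$ directly and match the $\A$-component and the terms attached to $a$, $b$, $c$ against the three bimodule axioms. You are in fact slightly more careful than the paper, which only writes out the forward direction; your remark on isolating each coefficient (by setting two of $a,b,c$ to zero) is exactly what is needed to make the converse rigorous.
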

\textbf{Proof:}
We have:
\beqs
 [(x_{1} + v_{1})\ast(x_{2} + v_{2})]\ast(x_{3} + v_{3})
&=&(x_{1}\cdot x_{2})\cdot x_{3} + l(x_{1}\cdot x_{2})v_{3} \cr
&&+  r(x_{3})(l(x_{1})v_{2}) + r(x_{3})(r(x_{2})v_{1}) \cr
&=&qx_{1}\cdot(x_{2}\cdot x_{3}) + ql(x_{1})l(x_{2})v_{3} \cr
&&+ql(x_{1})r(x_{3})v_{2} + qr(x_{2}\cdot x_{3})v_{1}\cr
&=&q(x_{1} + v_{1})\ast[(x_{2} + v_{2})\ast(x_{3} + v_{3})]
\eeqs
for all $x_{1}, x_{2}, x_{3}\in\A,\ v_{1}, v_{2}, v_{3}\in V.$

$ \hfill \square $

We denote such $q$-generalized associative algebra $(\mathcal{A}\oplus V, \ast) $ by $\mathcal{A} \ltimes_{l, r} V$ or simply $  \mathcal{A} \ltimes V $.	
\begin{lemma}
Let $ (l, r, V)$ be a bimodule of a $q$-generalized associative algebra $\mathcal{A}$.
\begin{enumerate}
\item[(i)] Let $ l^{\ast}, r^{\ast} $ : $  \mathcal{A} \rightarrow gl(V^{\ast}) $ be the linear maps given by 
\beq
\langle l^{\ast}(x)u^{\ast}, v \rangle = \langle l(x)v, u^{\ast} \rangle, \langle r^{\ast}(x)u^{\ast}, v \rangle = \langle r(x)v, u^{\ast} \rangle
\eeq
for all $ x \in  \mathcal{A} $, $ u^{\ast} \in V^{\ast}, v \in V $. Then,  $ (q^{-2}r^{\ast}, q^{2}l^{\ast}, V^{\ast}) $ is a bimodule of $  \mathcal{A} $.
\item[(ii)] $ (l, 0, V), (0, r, V), (q^{-2}r^{\ast}, 0, V^{\ast})$ and $ (0, q^{2}l^{\ast}, V^{\ast}) $ are bimodules.
\end{enumerate}
\end{lemma}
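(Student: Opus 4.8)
The plan is to verify directly the three defining identities of a bimodule of a $q$-generalized associative algebra, carrying out all computations in $V^{\ast}$ and transporting them to $V$ through the pairing $\langle\,,\,\rangle$. Write $\bar{l}:=q^{-2}r^{\ast}$ and $\bar{r}:=q^{2}l^{\ast}$; I must show that $\bar{l}(xy)=q\,\bar{l}(x)\bar{l}(y)$, $\bar{r}(xy)=q^{-1}\bar{r}(y)\bar{r}(x)$ and $\bar{l}(x)\bar{r}(y)=q^{-1}\bar{r}(y)\bar{l}(x)$ as operators on $V^{\ast}$, for all $x,y\in\A$. Collecting the scalar prefactors, these reduce respectively to $r^{\ast}(xy)=q^{-1}r^{\ast}(x)r^{\ast}(y)$, to $l^{\ast}(xy)=q\,l^{\ast}(y)l^{\ast}(x)$, and to $r^{\ast}(x)l^{\ast}(y)=q^{-1}l^{\ast}(y)r^{\ast}(x)$.

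Each of these three scalar-free identities is proved the same way: pair both sides against an arbitrary $v\in V$, repeatedly use the defining relations $\langle l^{\ast}(x)u^{\ast},v\rangle=\langle l(x)v,u^{\ast}\rangle$ and $\langle r^{\ast}(x)u^{\ast},v\rangle=\langle r(x)v,u^{\ast}\rangle$ to push the adjoint operators back to the $V$-side, and then invoke the bimodule axioms of $(l,r,V)$. For instance, for the first identity, $\langle r^{\ast}(xy)u^{\ast},v\rangle=\langle r(xy)v,u^{\ast}\rangle=q^{-1}\langle r(y)r(x)v,u^{\ast}\rangle=q^{-1}\langle r^{\ast}(x)r^{\ast}(y)u^{\ast},v\rangle$, using $r(xy)=q^{-1}r(y)r(x)$ together with two applications of the adjunction; the identity for $l^{\ast}$ is handled with $l(xy)=q\,l(x)l(y)$, and the mixed identity with $l(y)r(x)=q^{-1}r(x)l(y)$ (the axiom $l(x)r(y)=q^{-1}r(y)l(x)$ with the roles of $x$ and $y$ exchanged). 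The only delicate point is the bookkeeping of powers of $q$: since dualization reverses the order of composition, the ``left'' component $r^{\ast}$ inherits the behaviour of the ``right'' axiom of $(l,r,V)$ and conversely, which is exactly why the twists $q^{-2}$ and $q^{2}$ are the normalizations that restore the bimodule relations. I expect this order-reversal-versus-scalar-tracking to be the main (and essentially the only) source of error; the individual computations are one-liners.

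Part (ii) then follows from part (i) together with an elementary observation: if $(L,R,W)$ is any bimodule of $\A$, then so are $(L,0,W)$ and $(0,R,W)$, because the quadratic identity and the mixed identity involving the component that has been set to $0$ collapse to the trivial identity $0=0$, while the surviving identity is unchanged. Applying this to the bimodule $(l,r,V)$ shows that $(l,0,V)$ and $(0,r,V)$ are bimodules, and applying it to the bimodule $(q^{-2}r^{\ast},q^{2}l^{\ast},V^{\ast})$ established in (i) shows that $(q^{-2}r^{\ast},0,V^{\ast})$ and $(0,q^{2}l^{\ast},V^{\ast})$ are bimodules.
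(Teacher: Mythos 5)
Your proof is correct and follows essentially the same route as the paper: all three dual identities are verified by pushing the transposed operators back through the pairing (with the order reversal under dualization accounting for the $q^{\pm 2}$ normalizations), and your scalar bookkeeping checks out. Your treatment of part (ii) via the general observation that zeroing one component of any bimodule again yields a bimodule is a clean way of making precise what the paper dispatches with ``similarly.''
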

\textbf{Proof: }  Let $ (l, r, V)$ be a bimodule of a $q$-generalized associative algebra $\mathcal{A}$. Show that
 $(q^{-2}r^{\ast}, q^{2}l^{\ast}, V^{\ast}) $ is a bimodule of $\mathcal{A}$. Let $ x, y \in  \mathcal{A}, u^{\ast} \in V^{\ast}, v \in V $, we have
\begin{enumerate}
\item[(i)]
\beqs
 \langle q^{-2}r^{\ast}(xy)u^{\ast}, v \rangle =  \langle q^{-2}r(xy)v, u^{\ast} \rangle
                                        = \langle q^{-3}r(y)r(x)v, u^{\ast} \rangle 
                                        = \langle q(q^{-2}r^{\ast})(x)(q^{-2}r^{\ast})(y)u^{\ast}, v \rangle 
\eeqs
 leading to  $ q^{-2}r^{\ast}(xy)u^{\ast}= q(q^{-2}r^{\ast})(x)(q^{-2}r^{\ast})(y)u^{\ast}$;

\item[(ii)]
\beqs
 \langle q^{2}l^{\ast}(xy)u^{\ast}, v\rangle = \langle q^{2}l(xy)v, u^{\ast} \rangle
                                        = \langle q^{3}l(x)l(y)v, u^{\ast} \rangle
                                        = \langle q^{-1}(q^{2}l^{\ast})(y)(q^{2}l^{\ast})(x)u^{\ast}, v \rangle   
\eeqs
 giving  $q^{2}l^{\ast}(xy)u^{\ast} = q^{-1}(q^{2}l^{\ast})(y)(q^{2}l^{\ast})(x)u^{\ast}$;

\item[(iii)]
\beqs
\langle (q^{-2}r^{\ast})(x)(q^{2}l^{\ast})(y)u^{\ast}, v \rangle                                               = \langle l(y)r(x)v, u^{\ast} \rangle
                                                   = \langle q^{-1}r(x)l(y)v, u^{\ast} \rangle
                                                   =\langle q^{-1}(q^{2}l^{\ast})(y)(q^{-2}r^{\ast})(x)u^{\ast},v \rangle 
\eeqs
 providing  that  $(q^{-2}r^{\ast})(x)(q^{2}l^{\ast})(y)u^{\ast}= q^{-1}(q^{2}l^{\ast})(y)(q^{-2}r^{\ast})(x)u^{\ast}$.
Hence,  $ (q^{-2}r^{\ast}, q^{2}l^{\ast}, V^{\ast}) $ is a bimodule of $\mathcal{A}$.
\end{enumerate}
 Similarly, we can show also that $ (l, 0, V), (0, r, V), (q^{-2}r^{\ast}, 0, V^{\ast})$ and $ (0, q^{2}l^{\ast}, V^{\ast}) $
 are  well    bimodules of $  \mathcal{A} $.
\begin{remark}
\begin{itemize}
\item For $q=1$ we obtain a bimodule dual of an associative algebra which is well known in \cite{[C.Bai6]}. 
\item For $q=\pm 1$ the dual bimodule of a bimodule of an antiassociative algebra $\A$ and of a bimodule of an associative algebra $\A$ are equal ie $(r^{\ast}, l^{\ast}, V^{\ast})$.
\end{itemize}
\end{remark}
$ \hfill \square $
\begin{example}
Let $(\mathcal{A}, \cdot)$ be a $q$-generalized associative algebra. Let $ L_{\cdot}(x) $ and $ R_{\cdot}(x) $ denote the left and right multiplication operators, respectively, that is, 
$L_{\cdot}(x)(y)=x\cdot y, R_{\cdot}(x)(y) = y\cdot x$. For any $ x, y \in  \mathcal{A} $. Let $ L_{\cdot} :  \mathcal{A} \rightarrow gl( \mathcal{A}) $ with $ x \mapsto L_{\cdot}(x) $ and 
$ R_{\cdot} :  \mathcal{A} \rightarrow gl( \mathcal{A}) $ with $ x \mapsto R_{\cdot}(x) $ (for every $ x \in  \mathcal{A} $) be two linear maps. 
Then $ (L_{\cdot},0), (0,R_{\cdot}) $ and $ (L_{\cdot}, R_{\cdot})$ are bimodules of $  \mathcal{A} $ too.
\end{example}
\begin{theorem}\label{matched pair theorem}
Let $ ( \mathcal{A}, \cdot) $ and $(\mathcal{B}, \circ) $ be two $q$-generalized associative algebras. Suppose that there 
are linear maps $ l_{ \mathcal{A}}, r_{ \mathcal{A}} :  \mathcal{A} \rightarrow gl( \mathcal{B}) $ and $ l_{ \mathcal{B}}, r_{ \mathcal{B}} :  \mathcal{B} \rightarrow gl( \mathcal{A}) $ such that $ (l_{ \mathcal{A}}, r_{ \mathcal{A}} ) $ is

 a bimodule of $  \mathcal{A} $ and $ (l_{ \mathcal{B}}, r_{ \mathcal{B}}) $ is a bimodule of $  \mathcal{B}, $  
satisfying the following  conditions:
\beq \label{matched pair 1}
l_{ \mathcal{A}}(x)(a \circ b) = q^{-1}l_{ \mathcal{A}}(r_{ \mathcal{B}}(a)x)b + q^{-1}(l_{ \mathcal{A}}(x)a)\circ b, 
\eeq
\beq \label{matched pair 2}
r_{ \mathcal{A}}(x)(a \circ b) = qr_{ \mathcal{A}}(l_{ \mathcal{B}}(b)x)a + q a \circ (r_{ \mathcal{A}}(x)b), 
\eeq
\beq \label{matched pair 3}
l_{ \mathcal{B}}(a)(x \cdot y) = q^{-1}l_{ \mathcal{B}}(r_{ \mathcal{A}}(x)a)y + q^{-1}(l_{ \mathcal{B}}(a)x) \cdot y, 
\eeq
\beq \label{matched pair 4}
r_{ \mathcal{B}}(a)(x \cdot y) = qr_{ \mathcal{B}}(l_{ \mathcal{A}}(y)a)x + q x \cdot (r_{ \mathcal{B}}(a)y), 
\eeq
\beq\label{matched pair 5}
l_{ \mathcal{A}}(l_{ \mathcal{B}}(a)x)b + (r_{ \mathcal{A}}(x)a) \circ b - qr_{ \mathcal{A}}(r_{ \mathcal{B}}(b)x)a - q a \circ (l_{ \mathcal{A}}(x)b) = 0, 
\eeq
\beq \label{matched pair 6}
l_{ \mathcal{B}}(l_{ \mathcal{A}}(x)a)y + (r_{ \mathcal{B}}(a)x) \cdot y - qr_{ \mathcal{B}}(r_{ \mathcal{A}}(y)a)x - q x \cdot (l_{ \mathcal{B}}(a)y) = 0
\eeq
 for any $ x, y \in  \mathcal{A}, a,b \in  \mathcal{B} $. Then, 
 there is a $q$-generalized associative algebra structure on the direct sum $  \mathcal{A} \oplus  \mathcal{B} $ of 
the underlying vector spaces of $  \mathcal{A} $ and $  \mathcal{B} $ given by
\beq \label{product of matched pair}
(x + a) \ast (y + b) = (x \cdot y + l_{ \mathcal{B}}(a)y + r_{ \mathcal{B}}(b)x) + (a \circ b +  l_{ \mathcal{A}}(x)b +  r_{ \mathcal{A}}(y)a)
\eeq
for all $ x, y \in  \mathcal{A}, a,b \in  \mathcal{B} $. We denote this $q$-generalized associative algebra by
 $  \mathcal{A} \bowtie^{l_{ \mathcal{A}}, r_{ \mathcal{A}}}_{l_{ \mathcal{B}}, r_{ \mathcal{B}}}  \mathcal{B} $ 
or simply $  \mathcal{A} \bowtie  \mathcal{B} $. 
\end{theorem}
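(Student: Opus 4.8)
\noindent\emph{Proof strategy.}

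The plan is to check directly that the product $\ast$ defined in \eqref{product of matched pair} satisfies the $q$-generalized associativity law
\[
(u\ast v)\ast w \;=\; q\,\bigl(u\ast(v\ast w)\bigr)\qquad\text{for all }u,v,w\in\A\oplus\B .
\]
Since $\ast$ is bilinear and each element of $\A\oplus\B$ splits uniquely as a sum of an element of $\A$ and an element of $\B$, it suffices to verify this identity when each of $u,v,w$ lies entirely in $\A$ or entirely in $\B$; this produces $2^{3}=8$ cases. In each case I would expand both sides with \eqref{product of matched pair}, drop the summands that vanish because some argument is $0$, and then compare the $\A$-component and the $\B$-component of the two sides separately.

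First I would dispatch the two homogeneous cases. If $x,y\in\A$ then \eqref{product of matched pair} gives $x\ast y=x\cdot y$, hence $(x\ast y)\ast z=(x\cdot y)\cdot z$ and $q\,x\ast(y\ast z)=q\,x\cdot(y\cdot z)$, so the required identity is exactly the $q$-generalized associativity of $(\A,\cdot)$. The case $a,b,c\in\B$ is the mirror image, using the $q$-generalized associativity of $(\B,\circ)$.

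The substance lies in the six ``mixed'' cases, in which exactly one or exactly two of the three arguments lie in $\B$. The claim to be verified is that, after the obvious renaming of variables, each mixed case produces precisely two identities: one of them is a defining relation of the bimodule $(l_{\A},r_{\A})$ of $\A$ or of the bimodule $(l_{\B},r_{\B})$ of $\B$, and the other is exactly one of \eqref{matched pair 1}--\eqref{matched pair 6}; moreover, running over all six cases consumes each of the six conditions \eqref{matched pair 1}--\eqref{matched pair 6} and all three bimodule relations for each of $(l_{\A},r_{\A})$ and $(l_{\B},r_{\B})$ exactly once (the three cases with two $\A$-arguments exhaust the relations of $(l_{\A},r_{\A})$, and those with two $\B$-arguments exhaust the relations of $(l_{\B},r_{\B})$). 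As a representative computation, take $x\in\A$, $b\in\B$, $z\in\A$; then \eqref{product of matched pair} yields
\[
(x\ast b)\ast z=\bigl((r_{\B}(b)x)\cdot z+l_{\B}(l_{\A}(x)b)z\bigr)+r_{\A}(z)(l_{\A}(x)b),
\]
\[
q\,x\ast(b\ast z)=\bigl(q\,x\cdot(l_{\B}(b)z)+q\,r_{\B}(r_{\A}(z)b)x\bigr)+q\,l_{\A}(x)(r_{\A}(z)b),
\]
where in each line the bracketed term is the $\A$-component and the remaining term the $\B$-component. Hence the $\A$-components coincide if and only if \eqref{matched pair 6} holds (with $a=b$, $y=z$), while the $\B$-components coincide if and only if $r_{\A}(z)l_{\A}(x)b=q\,l_{\A}(x)r_{\A}(z)b$, which is the relation $l_{\A}(x)r_{\A}(y)v=q^{-1}r_{\A}(y)l_{\A}(x)v$ defining the bimodule $(l_{\A},r_{\A})$. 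The remaining five mixed cases are treated the same way, each contributing one bimodule relation together with one of \eqref{matched pair 1}--\eqref{matched pair 5}.

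I expect the only real obstacle to be bookkeeping rather than anything conceptual: one must keep track of the factors $q^{\pm1}$ introduced by the bimodule relations against the single overall factor $q$ on the right-hand side of the associativity law, and one must be sure that the correspondence between the six mixed cases and the six conditions \eqref{matched pair 1}--\eqref{matched pair 6} is genuinely a bijection. A convenient consistency check is that at $q=1$ the statement specializes to the matched-pair theorem for associative algebras, and at $q=-1$ to the matched-pair theorem for antiassociative algebras used in the rest of the paper.
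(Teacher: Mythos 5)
Your proposal is correct and follows essentially the same route as the paper: a direct verification that $\ast$ satisfies the $q$-generalized associativity law, with the paper expanding $((x+a)\ast(y+b))\ast(z+c)$ and $q(x+a)\ast((y+b)\ast(z+c))$ all at once while you organize the identical computation into the eight homogeneous cases via multilinearity. Your representative case $(x,b,z)$ is computed correctly and your claimed correspondence between the six mixed cases, the six conditions \eqref{matched pair 1}--\eqref{matched pair 6}, and the six bimodule relations does check out.
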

\textbf{Proof: }  We have:
\beqs
((x + a) \ast (y +b)) \ast (z +c)  &=& [ (x \cdot y + l_{ \mathcal{B}}(a)y + r_{ \mathcal{B}}(b)x) \cr
                                   && + (a \circ b + l_{ \mathcal{A}}(x)b + r_{ \mathcal{A}}(y)a) ] 
                                   \ast (z +c) \cr
                                   &=& (x \cdot y) \cdot z + (l_{ \mathcal{B}}(a)y)\cdot z + (r_{ \mathcal{B}}(b)x)\cdot z \cr 
                                 &&  + l_{ \mathcal{B}}(a \circ b)z + l_{ \mathcal{B}}(l_{ \mathcal{A}}(x)b)z
                                  + l_{ \mathcal{B}}(r_{ \mathcal{A}}(y)a)z \cr
                                 && + r_{ \mathcal{B}}(c)(x \cdot y) + r_{ \mathcal{B}}(c)
                                 (l_{ \mathcal{B}}(a)y) + r_{ \mathcal{B}}(c)(r_{ \mathcal{B}}(b)x) \cr
                                 &&  + a\circ (b \circ c) + (l_{\mathcal{A}}(x)b)\circ c \cr
                                 && + (l_{\mathcal{A}}(x)b)\circ c + (r_{ \mathcal{A}}(y)a)\circ c \cr
                                 &&  + l_{ \mathcal{A}}(x \cdot y)c + l_{ \mathcal{A}}(l_{ \mathcal{B}}(a)y)c
                                  \cr
                                 && + l_{ \mathcal{A}}(r_{ \mathcal{B}}(b)x)c + r_{ \mathcal{A}}(z)(a \circ b)
                                  \cr
                                 && + r_{ \mathcal{A}}(z)(l_{ \mathcal{A}}(x)b) + r_{ \mathcal{A}}(z)(r_{ \mathcal{A}}(y)a)
\eeqs
and
\beqs
q(x + a) \ast [(y + b)\ast (z + c)]  &=&q(x + a) \ast [ (y \cdot z + l_{ \mathcal{B}}(b)z +
 r_{ \mathcal{B}}(c)y) 
\cr
&& + ( b \circ c + l_{ \mathcal{A}}(y)c + r_{ \mathcal{A}}(z)b) ]
                                    \cr &=& qx\cdot (y\cdot z) + q x\cdot (l_{ \mathcal{B}}(b)z) + q x\cdot (r_{ \mathcal{B}}(c)y) \cr
                                   && ql_{ \mathcal{B}}(a)(y \cdot z) + q l_{ \mathcal{B}}(a)
                                   (l_{ \mathcal{B}}(b)z) + q l_{ \mathcal{B}}(a)(r_{ \mathcal{B}}(c)y) \cr
                                   &&  q r_{ \mathcal{B}}(b \circ c)x + q r_{ \mathcal{B}}
                                   (l_{ \mathcal{A}}(y)c)x + q r_{ \mathcal{B}}(r_{ \mathcal{A}}(z)b)x  \cr
                                   && q a\circ b\circ c + q a\circ (l_{ \mathcal{A}}(y)c) +q a\circ (r_{ \mathcal{A}}(z)b)  \cr
                                   && q l_{ \mathcal{A}}(x)(b\circ c) + q l_{ \mathcal{A}}(x)(l_{ \mathcal{A}}
                                   (y)c) q l_{ \mathcal{A}}(x)(r_{ \mathcal{A}}(z)b) \cr
                                  && + q r_{ \mathcal{A}}(y\cdot z)a + q r_{ \mathcal{A}}(l_{ \mathcal{B}}(b)z)a + q r_{ \mathcal{A}}(r_{ \mathcal{B}}(c)y)\circ a.
\eeqs
Then $ ((x + a) \ast (y +b)) \ast (z +c) = q(x + a) \ast ((y + b) \ast (z + c)) $.

$ \hfill \square $
\begin{definition}
Let $( \mathcal{A},\cdot)$ and $( \mathcal{B}, \circ) $ be two $q$-generalized associative algebras. Suppose that there are linear maps
 $ l_{ \mathcal{A}}, r_{ \mathcal{A}} :  \mathcal{A} \rightarrow gl( \mathcal{B}) $ and 
$ l_{ \mathcal{B}}, r_{ \mathcal{B}} :  \mathcal{B} \rightarrow gl( \mathcal{A}) $ such that 
$ (l_{ \mathcal{A}}, r_{ \mathcal{A}}) $ is a bimodule of $  \mathcal{A} $ and $ (l_{ \mathcal{B}}, r_{ \mathcal{B}}) $ 
is a bimodule of $  \mathcal{B} $. If   the equations  (\ref{matched pair 1}) - (\ref{matched pair 6}) are satisfied, then
 $ ( \mathcal{A},  \mathcal{B}, l_{ \mathcal{A}}, r_{ \mathcal{A}}, l_{ \mathcal{B}}, r_{ \mathcal{B}}) $ 
 is called a \textbf{matched pair of $q$-generalized associative algebras}.
\end{definition}

\begin{remark} In the previous definition
\begin{itemize}
\item for $q=1$,  $( \mathcal{A},  \mathcal{B}, l_{ \mathcal{A}}, r_{ \mathcal{A}}, l_{ \mathcal{B}}, r_{ \mathcal{B}})$ is called a matched pair of associative algebras which is well known in \cite{[C.Bai6]};
\item for $q=-1$, $( \mathcal{A},  \mathcal{B}, l_{ \mathcal{A}}, r_{ \mathcal{A}}, l_{ \mathcal{B}}, r_{ \mathcal{B}})$ is called a matched pair of antiassociative algebras;
\item when $q=2$, $( \mathcal{A},  \mathcal{B}, l_{ \mathcal{A}}, r_{ \mathcal{A}}, l_{ \mathcal{B}}, r_{ \mathcal{B}})$ is called a matched pair of a commutative Zinbiel algebras.
\end{itemize}
\end{remark}
\section{Double constructions of quadratic antiassociative algebras}
\begin{definition}
We call $(\mathcal{A}, B)$ a   double
construction of a quadratic antiassociative algebra associated to
$\mathcal{A}_1$ and ${\mathcal A}_1^*$ if it satisfies the conditions
 \begin{enumerate}
 \item[(1)] $ \mathcal{A} = \mathcal{A}_{1}
\oplus \mathcal{A}^{\ast}_{1} $ as the direct sum of vector
spaces; \item[(2)] $\mathcal{A}_1$ and ${\mathcal A}_1^*$ are antiassociative subalgebras of $\mathcal{A}$; \item[(3)] $B$ is the natural non-degerenate invariant symmetric
bilinear form on $ \mathcal{A}_{1} \oplus \mathcal{A}^{\ast}_{1} $
given by
 \beq \label{quadratic form}
 B(x + a^{\ast}, y + b^{\ast}) = \langle x, b^{\ast} \rangle +  \langle a^{\ast}, y \rangle
\eeq
for all  $x, y \in \mathcal{A}_{1}, a^{\ast}, b^{\ast} \in \mathcal{A}^{\ast}_{1}$
where $ \langle  , \rangle $ is the natural pair between the vector space $ \mathcal{A}_{1} $ and its dual space  $ \mathcal{A}^{\ast}_{1} $.
\end{enumerate}
\end{definition} 
Let $ (\mathcal{A}, \cdot) $ be an antiassociative algebra. 
Suppose that there is an antiassociative algebra structure $ " \circ " $ on its dual space $ \mathcal{A}^{\ast} $.
 We construct an antiassociative algebra structure on the direct sum $\mathcal{A}\oplus \mathcal{A}^{\ast}$ of the underlying vector spaces of $ \mathcal{A} $ and $ \mathcal{A}^{\ast} $ such that $ (\mathcal{A}, \cdot) $ and 
($ \mathcal{A}^{\ast}, \circ $) are subalgebras and the symmetric bilinear form on $ \mathcal{A}\oplus \mathcal{A}^{\ast} $ 
given by (\ref{quadratic form}) is invariant. That is, $ (\mathcal{A}\oplus \mathcal{A}^{\ast}, \mathcal{B}) $ is a quadratic antiassociative algebra.
 Such a construction is called a double construction of a 
quadratic antiassociative algebra associated to $ (\mathcal{A}, \cdot) $ and $ (\mathcal{A}^{\ast}, \circ)$ 
and we denote it by $ ( \mathcal{A}\oplus \mathcal{A}^{\ast}, B) $.
\begin{theorem}
Let $ (\mathcal{A}, \cdot) $ be an antiassociative algebra. Suppose that there is an antiassociative algebra structure $ " \circ " $ on its 
dual space $ \mathcal{A}^{\ast} $. Then, there is a double construction of a quadratic antiassociative algebra associated to $ (\mathcal{A}, \cdot) $ 
and $ (\mathcal{A}^{\ast}, \circ) $ if and only if $ (\mathcal{A}, \mathcal{A}^{\ast}, R^{\ast}_{\cdot}, L^{\ast}_{\cdot}, R^{\ast}_{\circ},  L^{\ast}_{\circ})$ 
is a matched pair of antiassociative algebras.
\end{theorem}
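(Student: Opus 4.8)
The plan is to deduce the statement from the matched pair Theorem~\ref{matched pair theorem} taken at $q=-1$. When $q=-1$ one has $q^{2}=q^{-2}=1$, so the lemma on dual bimodules, applied to the regular bimodule $(L_{\cdot},R_{\cdot},\A)$ of $\A$ (a bimodule by the Example above), gives that $(R^{\ast}_{\cdot},L^{\ast}_{\cdot},\A^{\ast})$ is a bimodule of $\A$; symmetrically $(R^{\ast}_{\circ},L^{\ast}_{\circ},\A)$ is a bimodule of $(\A^{\ast},\circ)$. Hence the bimodule clauses in the definition of a matched pair of antiassociative algebras are automatically satisfied by $(l_{\A},r_{\A},l_{\B},r_{\B})=(R^{\ast}_{\cdot},L^{\ast}_{\cdot},R^{\ast}_{\circ},L^{\ast}_{\circ})$, and the theorem reduces to the equivalence between the existence of a double construction and the six compatibility identities (\ref{matched pair 1})--(\ref{matched pair 6}) written out for these maps.

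For the implication ``$\Leftarrow$'' I would argue as follows. If $(\A,\A^{\ast},R^{\ast}_{\cdot},L^{\ast}_{\cdot},R^{\ast}_{\circ},L^{\ast}_{\circ})$ is a matched pair, then by Theorem~\ref{matched pair theorem} the formula (\ref{product of matched pair}) defines an antiassociative product $\ast$ on $\A\oplus\A^{\ast}$ for which $\A$ and $\A^{\ast}$ are visibly subalgebras. It remains to check that the form $B$ of (\ref{quadratic form}) is invariant for $\ast$. This is a direct computation: expanding $B\big((x+a^{\ast})\ast(y+b^{\ast}),\,z+c^{\ast}\big)$ and $B\big(x+a^{\ast},\,(y+b^{\ast})\ast(z+c^{\ast})\big)$ via (\ref{product of matched pair}) and the defining relations of $L^{\ast}_{\cdot},R^{\ast}_{\cdot},L^{\ast}_{\circ},R^{\ast}_{\circ}$, both sides collapse to the same cyclically symmetric expression
\beqs
\langle x\cdot y,c^{\ast}\rangle+\langle z\cdot x,b^{\ast}\rangle+\langle y\cdot z,a^{\ast}\rangle+\langle b^{\ast}\circ c^{\ast},x\rangle+\langle c^{\ast}\circ a^{\ast},y\rangle+\langle a^{\ast}\circ b^{\ast},z\rangle ,
\eeqs
so $B$ is invariant and $(\A\oplus\A^{\ast},B)$ is a double construction; notice that no hypothesis beyond the matched pair axioms enters here.

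For ``$\Rightarrow$'' I would start from a double construction $(\A\oplus\A^{\ast},\ast,B)$. Since $\A$ and $\A^{\ast}$ are subalgebras, the mixed products are controlled by linear maps $l_{\B},r_{\B}:\A^{\ast}\to gl(\A)$ and $l_{\A},r_{\A}:\A\to gl(\A^{\ast})$, so that $\ast$ has the shape (\ref{product of matched pair}) for some a priori unknown quadruple $(l_{\A},r_{\A},l_{\B},r_{\B})$. First I would use the invariance of $B$ to identify these maps: from $B(a^{\ast}\ast x,y)=B(a^{\ast},x\ast y)$ with $x,y\in\A$, $a^{\ast}\in\A^{\ast}$ one reads off $\langle r_{\A}(x)a^{\ast},y\rangle=\langle a^{\ast},x\cdot y\rangle=\langle L^{\ast}_{\cdot}(x)a^{\ast},y\rangle$, hence $r_{\A}=L^{\ast}_{\cdot}$; from $B(x\ast a^{\ast},y)=B(x,a^{\ast}\ast y)$ one gets $l_{\A}=R^{\ast}_{\cdot}$; and exchanging the roles of $\A$ and $\A^{\ast}$ gives $l_{\B}=R^{\ast}_{\circ}$ and $r_{\B}=L^{\ast}_{\circ}$. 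Thus $\ast$ is exactly the product (\ref{product of matched pair}) built from the dual actions. Then I would expand the antiassociativity law $(u\ast v)\ast w=-u\ast(v\ast w)$ for $\ast$ over the choices of $u,v,w\in\A\cup\A^{\ast}$ and separate the $\A$- and $\A^{\ast}$-parts: the homogeneous choices return the antiassociativity of $\cdot$ and of $\circ$, the choices with two arguments in one summand return (using the bimodule relations from the first paragraph) the bimodule axioms, and the genuinely mixed choices return precisely (\ref{matched pair 1})--(\ref{matched pair 6}). This is the reversal of the expansion in the proof of Theorem~\ref{matched pair theorem}, and it shows $(\A,\A^{\ast},R^{\ast}_{\cdot},L^{\ast}_{\cdot},R^{\ast}_{\circ},L^{\ast}_{\circ})$ is a matched pair.

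The step I expect to be the main obstacle is this last translation: one must check that the six identities are necessary, not just sufficient, i.e.\ that after identifying the structure maps the $\A$- and $\A^{\ast}$-components of $(u\ast v)\ast w+u\ast(v\ast w)$ break into independent pieces, each of which must vanish on its own. This holds because $\A$ and $\A^{\ast}$ are complementary in $\A\oplus\A^{\ast}$ and, within each component, the monomials can be sorted by which of the three arguments carries the outer multiplication; once this is organized, the remaining work is the routine bookkeeping already illustrated above. $\hfill\square$
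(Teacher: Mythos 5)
Your proposal is correct and follows essentially the same route as the paper: both directions reduce to Theorem~\ref{matched pair theorem}, the ``if'' part is the same direct computation showing that both $B\big((x+a^{\ast})\ast(y+b^{\ast}),z+c^{\ast}\big)$ and $B\big(x+a^{\ast},(y+b^{\ast})\ast(z+c^{\ast})\big)$ collapse to the six-term cyclic expression, and the ``only if'' part identifies the structure maps as $R^{\ast}_{\cdot},L^{\ast}_{\cdot},R^{\ast}_{\circ},L^{\ast}_{\circ}$ and reads the matched-pair identities off the antiassociativity of $\ast$. Your write-up is in fact somewhat more explicit than the paper's on the converse (the identification of $l_{\A},r_{\A},l_{\B},r_{\B}$ via invariance of $B$, and the separation of components), but the underlying argument is the same.
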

\textbf{Proof:}  
Let us consider the four maps
\beqs
&&L^{\ast}_{\cdot}: \mathcal{A} \rightarrow gl(\mathcal{A}^{\ast}), \langle L^{\ast}_{\cdot}(x)u^{\ast}, v \rangle = \langle
L_{\cdot}(x)v, u^{\ast} \rangle = \langle xv, u^{\ast} \rangle,\cr \cr
&&R^{\ast}_{\cdot} : \mathcal{A} \rightarrow gl(\mathcal{A}^{\ast}), \langle R^{\ast}_{\cdot}(x)u^{\ast}, v \rangle = \langle
R_{\cdot}(x)v, u^{\ast} \rangle = \langle vx, u^{\ast} \rangle,\cr \cr
&&R^{\ast}_{\circ} : \mathcal{A}^{\ast} \rightarrow gl(\mathcal{A}), \langle R^{\ast}_{\circ}(x^{\ast})u, 
v^{\ast} \rangle = \langle R_{\circ}(x^{\ast})v^{\ast}, u \rangle = \langle v^{\ast} \circ x^{\ast}, u\rangle, \cr \cr
&&L^{\ast}_{\circ} : \mathcal{A}^{\ast} \rightarrow gl(\mathcal{A}), \langle L^{\ast}_{\circ}(x^{\ast})u,
 v^{\ast} \rangle = \langle L_{\circ}(x^{\ast})v^{\ast}, u \rangle = \langle x^{\ast} \circ v^{\ast},
  u\rangle,
\eeqs 
for all $ x, v, u \in \mathcal{A} $, $ x^{\ast}, v^{\ast}, u^{\ast} \in \mathcal{A}^{\ast} $.
If $ (\mathcal{A}, \mathcal{A}^{\ast}, R^{\ast}_{\cdot}, L^{\ast}_{\cdot}, R^{\ast}_{\circ},  L^{\ast}_{\circ})$ is a matched pair of antiassociative algebras, then  the bilinear form 
$ B(\cdot, \cdot) $ defined by the 
equation (\ref{quadratic form}) is invariant on the antiassociative algebra 
$ \mathcal{A} \bowtie ^{R^{\ast}_{\cdot}, L^{\ast}_{\cdot}} _{R^{\ast}_{\circ},L^{\ast}_{\circ}} \mathcal{A}^{\ast} $ with
its product $ \ast $  given by the equation (\ref{product of matched pair}), that is  $ B[(x + a^{\ast})\ast 
(y + b^{\ast}),(z + c^{\ast})] = B[(x + a^{\ast}), (y + b^{\ast})\ast (z + c^{\ast})] $, where $ B(x + a^{\ast}, y + b^{\ast}) = \langle x, b^{\ast} \rangle + \langle a^{\ast}, y\rangle $, 
for all $ x, y \in \mathcal{A}^{\ast}, a^{\ast}, b^{\ast} \in \mathcal{A}^{\ast}$ and $ (x + a^{\ast})\ast
(y + b^{\ast}) = (x\cdot y + l_{\mathcal{B}}(a)y + r_{\mathcal{B}}(b)x ) + (a\circ b + l_{\mathcal{A}}(x)b + r_{\mathcal{A}}(y)a ) $ with $ l_{\mathcal{A}} = R^{\ast}, r_{\mathcal{A}} = L^{\ast}, l_{\mathcal{B}} = R^{\ast}_{\circ}, r_{\mathcal{B}} = L^{\ast} _{\circ} $. Indeed, we have
\beqs
B[(x + a^{\ast})\ast (y + b^{\ast}), (z + c^{\ast})]  &=& B[(x\cdot y + l_{A^{\ast}}
(a^{\ast})y + r_{A^{\ast}}(b^{\ast})x) + (a^{\ast} \circ b^{\ast} \cr 
&& + l_{\mathcal{A}}(x)b^{\ast} +  r_{\mathcal{A}}(y)a^{\ast}), z + c^{\ast}] 
                                                               \cr &=& \langle (x\cdot y +
                                                                l_{\mathcal{A}^{\ast}}(a^{\ast})y + r_{\mathcal{A}^{\ast}}(b^{\ast})x), c^{\ast} \rangle \cr 
                                                              &&  + \langle (a^{\ast} \circ b^{\ast} + l_{\mathcal{A}}(x)b^{\ast} +  r_{\mathcal{A}}(y)a^{\ast}), z \rangle
                                                              \cr &=& \langle x\cdot y, c^{\ast} \rangle 
                                                              + \langle l_{\mathcal{A}^{\ast}}(a^{\ast})y,
                               c^{\ast} \rangle + \langle r_{\mathcal{A}^{\ast}}(b^{\ast})x) , c^{\ast} \rangle \cr 
                                                              && + \langle a^{\ast} \circ b^{\ast}, z \rangle + \langle l_{\mathcal{A}}(x)b^{\ast}, z \rangle + \langle r_{\mathcal{A}}(y)a^{\ast}), z \rangle
                                                              \cr &=& \langle x\cdot y, c^{\ast} \rangle + 
                                                     \langle c^{\ast} \circ a^{\ast} , y \rangle + \langle b^{\ast} \circ c^{\ast} , x \rangle \cr
                                                             && + \langle a^{\ast} \circ b^{\ast} , z \rangle 
                                                             + \langle z\cdot x , b^{\ast} \rangle +
                                                              \langle y\cdot z , a^{\ast} \rangle
\eeqs 
and
\beqs
B[x + a^{\ast} ,(y + b^{\ast}) \ast (z + c^{\ast})]  &=& B[x + a^{\ast} ,(y\cdot z + l_{\mathcal{A}^{\ast}}(b^{\ast})z + r_{\mathcal{A}^{\ast}}(c^{\ast})y)  \cr
&& + (b^{\ast} \circ c^{\ast} + l_{A}(y)c^{\ast} +  r_{\mathcal{A}}(z)b^{\ast})]
                                                           \cr &=& \langle x ,(b^{\ast} \circ c^{\ast} + l_{\mathcal{A}}(y)c^{\ast} +  r_{\mathcal{A}}(z)b^{\ast}) \rangle \cr
                                                              && + \langle(y\cdot z + l_{\mathcal{A}^{\ast}}(b^{\ast})z + r_{\mathcal{A}^{\ast}}(c^{\ast})y, a^{\ast} \rangle 
                                                           \cr &=& \langle x, b^{\ast}\circ c^{\ast} \rangle + \langle x, l_{\mathcal{A}}(y)c^{\ast} \rangle + \langle x, r_{\mathcal{A}}(z)b^{\ast}) \rangle \cr
                                                              &&   + \langle y\cdot z , a^{\ast} \rangle + \langle l_{\mathcal{A}^{\ast}}(b^{\ast})z, a^{\ast} \rangle + 
                                                              \langle  r_{\mathcal{A}^{\ast}}(c^{\ast})y , a^{\ast} \rangle
                                                            \cr &=& \langle x ,b^{\ast}\circ c^{\ast} \rangle 
                                                            + \langle c^{\ast}, x\cdot y  \rangle + \langle b^{\ast}, z\cdot x \rangle \cr  
                                                             &&  + \langle y\cdot z , a^{\ast} \rangle 
                                                     + \langle a^{\ast}\circ b^{\ast}, z \rangle + \langle c^{\ast}\circ a^{\ast}, y \rangle. 
\eeqs 
Thus,  $ B $ is well invariant. Conversely, set 
\beqs
x \ast a^{\ast} = l_{\mathcal{A}}(x)a^{\ast} + r_{\mathcal{A}^{\ast}}(a^{\ast})x, a^{\ast} \ast x = l_{\mathcal{A}^{\ast}}(a^{\ast})x + r_{\mathcal{A}}(x)a^{\star},
\eeqs
for $ x \in \mathcal{A}, a^{\ast} \in \mathcal{A}^{\ast} $.
 Then, $ (\mathcal{A}, \mathcal{A}^{\ast}, R^{\ast}_{\cdot}, L^{\ast}_{\cdot}, R^{\ast}_{\circ},  L^{\ast}_{\circ}) $ is a matched pair of antiassociative algebras, since the  double construction of the quadratic antiassociative algebra associated to $ (\mathcal{A}, \cdot) $ and $ (\mathcal{A}^{\ast}, \circ) $ produces the equations (\ref{matched pair 1}) - (\ref{matched pair 6}).

$ \hfill \square $

\begin{theorem}
Let $ (\mathcal{A}, \cdot) $ be an antiassociative algebra. Suppose that there is an antiassociative algebra 
 structure $ "\circ" $ on its dual space $ \mathcal{A}^{\ast} $. Then,  $ (\mathcal{A}, \mathcal{A}^{\ast},
  R^{\ast}_{\cdot}, L^{\ast}_{\cdot}, R^{\ast}_{\circ},  L^{\ast}_{\circ}) $ is a matched pair of antiassociative algebras 
  if and only if for any $ x \in \mathcal{A}$ and $ a^{\ast}, b^{\ast} \in \mathcal{A}^{\ast} $,
\beq \label{match. pair dual 1}
R^{\ast}_{\cdot}(x)(a^{\ast} \circ b^{\ast}) = -R^{\ast}_{\cdot}(L^{\ast}_{\circ}(a^{\ast})x)b^{\ast} -
(R^{\ast}_{\cdot}(x)a^{\ast})\circ b^{\ast}, 
\eeq
\beq \label{match. pair dual 2}
 R^{\ast}_{\cdot}(R^{\ast}_{\circ}(a^{\ast})x)b^{\ast} + L^{\ast}_{\cdot}(x)a^{\ast}\circ b^{\ast}=  -L^{\ast}_{\cdot}(L^{\ast}_{\circ}(b^{\ast})x)a^{\ast} - a^{\ast}\circ (R^{\ast}_{\cdot}(x)b^{\ast}).
\eeq
\end{theorem}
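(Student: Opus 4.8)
The plan is to unwind what it means for $(\mathcal{A},\mathcal{A}^{\ast},R^{\ast}_{\cdot},L^{\ast}_{\cdot},R^{\ast}_{\circ},L^{\ast}_{\circ})$ to be a matched pair of antiassociative algebras --- that is, the $q=-1$ instance of the definition following Theorem~\ref{matched pair theorem}, taken with $l_{\mathcal{A}}=R^{\ast}_{\cdot}$, $r_{\mathcal{A}}=L^{\ast}_{\cdot}$, $l_{\mathcal{B}}=R^{\ast}_{\circ}$, $r_{\mathcal{B}}=L^{\ast}_{\circ}$ --- and to show that its six compatibility conditions collapse onto the two displayed identities. I would first dispose of the two bimodule requirements built into that definition: by the lemma on dual bimodules with $q=-1$, the dual of the regular bimodule $(L_{\cdot},R_{\cdot},\mathcal{A})$ of $(\mathcal{A},\cdot)$ is exactly $(R^{\ast}_{\cdot},L^{\ast}_{\cdot},\mathcal{A}^{\ast})$, hence automatically a bimodule of $\mathcal{A}$; symmetrically $(R^{\ast}_{\circ},L^{\ast}_{\circ},\mathcal{A})$ is the dual of the regular bimodule of $(\mathcal{A}^{\ast},\circ)$, hence a bimodule of $\mathcal{A}^{\ast}$ (identifying $\mathcal{A}^{\ast\ast}$ with $\mathcal{A}$). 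Thus the statement reduces to showing that, under this substitution with $q=-1$, the system (\ref{matched pair 1})--(\ref{matched pair 6}) is equivalent to the pair (\ref{match. pair dual 1})--(\ref{match. pair dual 2}).

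Next I would substitute $q=-1$ and the four maps directly into (\ref{matched pair 1})--(\ref{matched pair 6}). Equation (\ref{matched pair 1}) then becomes exactly (\ref{match. pair dual 1}), and equation (\ref{matched pair 5}), after moving its two terms with coefficient $-q$ to the other side, becomes (\ref{match. pair dual 2}). What remains is to prove that (\ref{matched pair 2}), (\ref{matched pair 3}) and (\ref{matched pair 4}) are each equivalent to (\ref{match. pair dual 1}), and that (\ref{matched pair 6}) is equivalent to (\ref{match. pair dual 2}). The mechanism is the same in every case: evaluate both sides of the operator identity on a test vector --- an element of $\mathcal{A}$ or of $\mathcal{A}^{\ast}$ according to which space the identity lives in --- and repeatedly apply the defining adjunctions $\langle L^{\ast}_{\cdot}(x)u^{\ast},v\rangle=\langle x\cdot v,u^{\ast}\rangle$, $\langle R^{\ast}_{\cdot}(x)u^{\ast},v\rangle=\langle v\cdot x,u^{\ast}\rangle$ and their $\circ$-counterparts, until the identity is rewritten entirely in terms of the natural pairing $\langle\,,\,\rangle$, the products $\cdot$ and $\circ$, and the maps $L^{\ast}_{\circ},R^{\ast}_{\circ}$ (respectively $L^{\ast}_{\cdot},R^{\ast}_{\cdot}$) applied to free elements $x,y\in\mathcal{A}$ and $a^{\ast},b^{\ast}\in\mathcal{A}^{\ast}$. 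One then observes that the resulting scalar forms of (\ref{matched pair 1}), (\ref{matched pair 2}), (\ref{matched pair 3}) and (\ref{matched pair 4}) are literally one and the same identity once the two free $\mathcal{A}$-arguments $x\leftrightarrow y$ and/or the two free $\mathcal{A}^{\ast}$-arguments $a^{\ast}\leftrightarrow b^{\ast}$ are relabelled, and that the scalar form of (\ref{matched pair 6}) is carried onto that of (\ref{matched pair 5}) by the swap $x\leftrightarrow y$. Reading this chain of equivalences in both directions --- and recalling that the bimodule axioms came for free --- gives the ``if and only if''. It is the exact analogue, for $q=-1$, of the pattern behind double constructions of Frobenius algebras in \cite{[C.Bai5],[C.Bai6]}.

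I expect the only real difficulty to be bookkeeping rather than anything conceptual. Each composite of the operators $L^{\ast}_{\cdot},R^{\ast}_{\cdot},L^{\ast}_{\circ},R^{\ast}_{\circ}$ has to be transposed in the correct order, and one must track, term by term, which slot of $\langle\,,\,\rangle$ carries the vector in $\mathcal{A}$ and which carries the covector in $\mathcal{A}^{\ast}$, since the natural pairing is being used ``in both directions'' simultaneously. The step where a sign or index slip would be fatal is the final relabelling of free variables that collapses four of the six conditions into one and the remaining two into one; apart from the adjunction formulas above and the dual-bimodule lemma, no further ingredient is required.
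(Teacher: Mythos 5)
Your proposal is correct and follows essentially the same route as the paper: substitute $l_{\mathcal{A}}=R^{\ast}_{\cdot}$, $r_{\mathcal{A}}=L^{\ast}_{\cdot}$, $l_{\mathcal{B}}=R^{\ast}_{\circ}$, $r_{\mathcal{B}}=L^{\ast}_{\circ}$ with $q=-1$, observe that two of the six matched-pair conditions are literally the two displayed identities, and then collapse the remaining four onto them by dualizing through the natural pairing and relabelling the free arguments. Your explicit appeal to the dual-bimodule lemma to dispose of the bimodule requirements is a point the paper leaves implicit, but it changes nothing structurally.
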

\textbf{Proof:}
Obviously, (\ref{match. pair dual 1}) gives (\ref{matched pair 1}) and
 (\ref{match. pair dual 2}) reduces to (\ref{matched pair 5})  when $ l_{\mathcal{A}} = R^{\ast}, r_{\mathcal{A}} = L^{\ast}, l_{\mathcal{B}} = l_{\mathcal{A}^{\ast}} = R^{\ast}_{\circ}, r_{\mathcal{B}} = r_{\mathcal{A}^{\ast}} = L^{\ast}_{\circ} $. Now, show that 
\beqs
(\ref{matched pair 1}) \Longleftrightarrow  (\ref{matched pair 2}) \Longleftrightarrow (\ref{matched pair 3})  \Longleftrightarrow (\ref{matched pair 4})  \cr {\mbox{and}}\;
(\ref{matched pair 5}) \Longleftrightarrow (\ref{matched pair 6}). 
\eeqs
 Suppose  (\ref{matched pair 1})  and (\ref{matched pair 5}) are satisfied and show that one has:
\beqs
L^{\ast}(x)(a^{\ast}\circ b^{\ast}) = -L^{\ast}(R^{\ast}_{\circ}(b^{\ast})x)a^{\ast} - a^{\ast}\circ (L^{\ast}(x)b^{\ast})\cr
R^{\ast}_{\circ}(x\cdot y) = -R^{\ast}_{\circ}(L^{\ast}(x)a^{\ast})y - (R^{\ast}_{\circ}(a)x)\cdot y \cr 
L^{\ast}_{\circ}(a^{\ast})(x\cdot y) = -L^{\ast}_{\circ}(R^{\ast}(y)a^{\ast})x - x\cdot (L^{\ast}_{\circ}(a^{\ast})y) \cr 
R^{\ast}_{\circ}(R^{\ast}(x)a^{\ast})y + (L^{\ast}_{\circ}(a^{\ast})x)\cdot y + L^{\ast}_{\circ}(L(y)a^{\ast})x + x\cdot (R^{\ast}_{\circ}(a)y) = 0.
\eeqs
We have :
\beqs
\langle R^{\ast}(x)a^{\ast}, y \rangle = \langle L^{\ast}(y)a^{\ast}, x \rangle = \langle    y\cdot x, a^{\ast} \rangle ;
\langle R^{\ast}_{\circ}(b^{\ast})x, a^{\ast} \rangle = \langle L^{\ast}_{\circ}(a^{\ast})x, b^{\ast} \rangle = \langle a^{\ast}\circ b^{\ast}, x \rangle
\eeqs
for all $ x, y \in \mathcal{A}, a^{\ast}, b^{\ast} \in \mathcal{A}^{\ast} $.Then
\benum
\item[(i)]
\beqs
\langle R^{\ast}(x)(a^{\ast}\circ b^{\ast}), y \rangle = \langle y\cdot x, a^{\ast}\circ b^{\ast} \rangle 
= \langle L^{\ast}(y)(a^{\ast}\circ b^{\ast}), x \rangle ;\cr 
\langle -R^{\ast}(L^{\ast}_{\circ}(a^{\ast})x)b^{\ast}, y \rangle  
= \langle -L^{\ast}(y)b^{\ast}, L^{\ast}_{\circ}(a^{\ast})x \rangle 
= \langle -a^{\ast}\circ(L^{\ast}(y)b^{\ast}), x\rangle \cr 
\langle -(R^{\ast}(x)a^{\ast})\circ b^{\ast}, y\rangle 
= \langle -R^{\ast}(x)a^{\ast},  R^{\ast}_{\circ}(b^{\ast})y \rangle 
= \langle -L^{\ast}( R^{\ast}_{\circ}(b^{\ast})y)a^{\ast}, x \rangle 
\eeqs
  leading to  (\ref{matched pair 1}) $ \Longleftrightarrow $ (\ref{matched pair 2});
\item[(ii)]
\beqs
\langle L^{\ast}(y)(a^{\ast} \circ b^{\ast}), x\rangle &=& \langle -a^{\ast}\circ (L^{\ast}(y)b^{\ast}), x \rangle + \langle -L^{\ast}(R^{\ast}_{\circ}(b^{\ast})y)\cdot x, a^{\ast} \rangle
                                                \cr    &=& \langle -R^{\ast}_{\circ}(L^{\ast}(y)b^{\ast})x, a^{\ast} \rangle + \langle -( R^{\ast}_{\circ}(b^{\ast})y)\cdot x, a^{\ast} \rangle
                                                \cr    &=& \langle  R^{\ast}_{\circ}(b^{\ast})(y\cdot x), a^{\ast} \rangle                                                   
\eeqs
  giving  (\ref{matched pair 2}) $ \Longleftrightarrow $ (\ref{matched pair 3}); 
\item[(iii)]
\beqs
\langle R^{\ast}(x)(a^{\ast}\circ b^{\ast}), y \rangle &=& \langle -R^{\ast}(L^{\ast}_{\circ}(a^{\ast})x)b^{\ast}, y \rangle + \langle -(R^{\ast}(x)a^{\ast})\circ b^{\ast}, y \rangle
                                                  \cr &=& \langle -y\cdot L^{\ast}_{\circ} (a^{\ast})x, b^{\ast} \rangle + \langle -L^{\ast}_{\circ}(R^{\ast}(x)a^{\ast})y, b^{\ast}\rangle
                                                  \cr &=& \langle L^{\ast}_{\circ}(a^{\ast})(y\cdot x), b^{\ast} \rangle
\eeqs
  providing that   (\ref{matched pair 1}) $  \Longleftrightarrow $ (\ref{matched pair 4}); 
\item[(iv)]
\beqs
&&\langle L^{\ast}(L^{\ast}_{\circ}(b^{\ast})x)a^{\ast}, y \rangle = \langle (L^{\ast}_{\circ}(b^{\ast})x)\cdot y, a^{\ast} \rangle ;
\langle a^{\ast}\circ (R^{\ast}(x)b^{\ast}), y \rangle = \langle R^{\ast}_{\circ}(R^{\ast}(x)b^{\ast})y, a^{\ast} \rangle ;\cr 
&&\langle (L^{\ast}(x)a^{\ast})\circ b^{\ast}, y \rangle = \langle R_{\circ}(b^{\ast})y, L^{\ast}(x)a^{\ast} \rangle 
                                                       = \langle x\cdot (R^{\ast}_{\circ}(b^{\ast})y), a^{\ast} \rangle ; \cr 
&& \langle R^{\ast}(R^{\ast}_{\circ}(a^{\ast})x)b^{\ast}, y \rangle  
     = \langle L^{\ast}(y)b^{\ast}, R^{\ast}_{\circ}(a^{\ast})x \rangle 
     = \langle L^{\ast}_{\circ}(L^{\ast}(y)b^{\ast})x, a^{\ast} \rangle                   
\eeqs
  implying that  (\ref{matched pair 5}) $  \Longleftrightarrow $ (\ref{matched pair 6}). 
\eenum  

$ \hfill \square $

\section{$q$-generalized dendriform algebras}
\subsection{Bimodule and matched pair of q-generalized dendriform algebras}
\begin{definition}
 Let $ \mathcal{A} $ be a vector space over a field $\mathbb{F}$ with two bilinear products denoted by $ \prec $ 
and $ \succ $. Then, $ (\mathcal{A}, \prec, \succ) $ is called a \textbf{q-generalized dendriform algebra} if, for any $ x, y, z \in \mathcal{A} $,
\beqs
&&(x \prec y) \prec z = q x \prec (y \ast z), \cr
&&(x \succ y) \prec z = q x \succ (y \prec z), \cr
&&x \succ (y \succ z ) = q^{-1} ( x \ast y) \succ z,
\eeqs
where 
\beq \label{antiassociative product}
x \ast y = x \prec y + x \succ y
\eeq .
\end{definition}

Let $ (\mathcal{A}, \prec, \succ) $ be a q-generalized dendriform algebra. For any $ x \in \mathcal{A} $, let $ L_{\succ}(x),  R_{\succ}(x) $ and $ L_{\prec}(x),$ $ R_{\prec}(x) $ denote the left and right multiplication operators of $(\mathcal{A}, \prec)$ and $(\mathcal{A}, \succ)$, respectively, that is,
\beq
L_{\succ}(x) y = x \succ y, R_{\succ}(x) y = y \succ x, L_{\prec}(x) y = x \prec y, 
R_{\prec}(x) y = y \prec x,
\eeq
for all $ x, y \in \mathcal{A} $. Moreover, let $ L_{\succ}, R_{\succ}, L_{\prec}, R_{\prec} : \mathcal{A} 
\rightarrow gl(\mathcal{A}) $ be four linear maps with $ x \rightarrow L_{\succ}(x),  x \rightarrow 
R_{\succ}(x),  x \rightarrow L_{\prec}(x), $ and $  x \rightarrow R_{\prec}(x)  $, respectively. 
\begin{proposition}  \label{pro2}
Let $(\mathcal{A}, \prec, \succ)$ be a q-generalized dendriform algebra. 
Then, $ (\mathcal{A}, \ast) $ is a q-generalized associative algebra. Moreover, $ (L_{\succ}, R_{\prec}) $ is a bimodule of the 
associated q-generalized associative algebra $ (\mathcal{A}, \ast) $.
\end{proposition}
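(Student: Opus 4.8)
The plan is to prove both assertions by directly reorganizing the three defining identities of a $q$-generalized dendriform algebra, namely
\beqs
&&(x \prec y) \prec z = q\, x \prec (y \ast z), \cr
&&(x \succ y) \prec z = q\, x \succ (y \prec z), \cr
&&x \succ (y \succ z) = q^{-1} (x \ast y) \succ z,
\eeqs
where $x \ast y = x \prec y + x \succ y$. First I would establish that $(\A, \ast)$ is $q$-generalized associative. Expanding with $\ast = \prec + \succ$, the left-hand side $(x\ast y)\ast z$ becomes $(x\prec y)\prec z + (x\succ y)\prec z + (x\prec y)\succ z + (x\succ y)\succ z$. The first dendriform identity rewrites $(x\prec y)\prec z$ as $q\,x\prec(y\prec z)+q\,x\prec(y\succ z)$; the second rewrites $(x\succ y)\prec z$ as $q\,x\succ(y\prec z)$; and the third identity, read as $(x\ast y)\succ z = q\,x\succ(y\succ z)$ and then split via $x\ast y = x\prec y + x\succ y$ (using bilinearity of $\succ$), rewrites the remaining pair $(x\prec y)\succ z + (x\succ y)\succ z$ as $q\,x\succ(y\succ z)$. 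Summing the four pieces gives $(x\ast y)\ast z = q(x\prec(y\prec z)+x\prec(y\succ z)+x\succ(y\prec z)+x\succ(y\succ z)) = q\,x\ast(y\ast z)$, which is exactly the $q$-generalized associativity law.

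For the bimodule assertion I would verify, against $(\A,\ast)$, the three defining conditions of a bimodule for the pair $(l,r)=(L_\succ, R_\prec)$, that is $l(x\ast y)v = q\,l(x)l(y)v$, $r(x\ast y)v = q^{-1}r(y)r(x)v$, and $l(x)r(y)v = q^{-1}r(y)l(x)v$. Each one is a single dendriform identity in disguise. Applying $L_\succ(x\ast y)$ to $v$ gives $(x\ast y)\succ v$, which by the third identity (with $z$ specialized to $v$) equals $q\,x\succ(y\succ v) = q\,L_\succ(x)L_\succ(y)v$, settling the first condition. Applying $R_\prec(x\ast y)$ to $v$ gives $v\prec(x\ast y)$, and the first identity with $(x,y,z)$ specialized to $(v,x,y)$ gives $q\,v\prec(x\ast y) = (v\prec x)\prec y = R_\prec(y)R_\prec(x)v$, i.e. $R_\prec(x\ast y)v = q^{-1}R_\prec(y)R_\prec(x)v$. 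Finally $L_\succ(x)R_\prec(y)v = x\succ(v\prec y)$, and the second identity with $(x,y,z)$ specialized to $(x,v,y)$ gives $q\,x\succ(v\prec y) = (x\succ v)\prec y = R_\prec(y)L_\succ(x)v$, i.e. $L_\succ(x)R_\prec(y)v = q^{-1}R_\prec(y)L_\succ(x)v$. Hence $(L_\succ,R_\prec)$ is a bimodule of $(\A,\ast)$.

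I do not expect any genuine obstacle: the whole content of the proposition is that the three dendriform relations repackage, on the one hand, into a single associativity-type relation for $\ast$, and on the other hand into the three bimodule axioms for $(L_\succ,R_\prec)$. The only point requiring care is the bookkeeping of the powers of $q$ --- in particular noticing that the \emph{same} third identity (the one carrying $q^{-1}$) is responsible both for the factor $q$ that appears after transposing in the associativity law and for the factor $q$ in the first bimodule condition --- together with keeping straight which variable slot each identity is being specialized to when checking the three bimodule equations.
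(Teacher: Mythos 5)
Your proposal is correct and follows essentially the same route as the paper: both decompose $(x\ast y)\ast z$ via $\ast=\prec+\succ$, match the resulting terms against the three dendriform identities (grouping $(x\prec y)\succ z+(x\succ y)\succ z$ as $(x\ast y)\succ z$), and then verify the three bimodule axioms for $(L_\succ,R_\prec)$ by exactly the same specializations of those identities. No gaps; the $q$-power bookkeeping is handled correctly.
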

\textbf{Proof: }
   We have :
\beqs
(x\ast y)\ast z &=& q x\ast (y\ast z) \Longleftrightarrow \cr
(x\ast y)\prec z + (x\ast y)\succ z &=& q x\prec(y\ast z) + q x\succ(y\ast z)\Longleftrightarrow \cr
(x\prec y)\prec z + (x\succ y)\prec z + (x\ast y)\succ z &=& q x\prec(y\ast z) + q x\succ(y\prec z) + q x\succ(y\succ z) \Longleftrightarrow\cr
&&\begin{cases}
~(x \prec y) \prec z = q x \prec (y \ast z),\\
~(x \succ y) \prec z = q x \succ (y \prec z),\\
~x \succ (y \succ z ) = q^{-1} ( x \ast y) \succ z.
\end{cases}
\eeqs
Then, $(\mathcal{A}, \ast)$ is well a q-generalized associative algebra.

Now, show that $ (L_{\succ}, R_{\prec}, \mathcal{A}) $ is a bimodule of q-generalized associative algebra $ (\mathcal{A}, \ast) $:
\beqs
L_{\succ}(x \ast y)v &=& (x \ast y) \succ v= q x\succ(y \succ v) \cr
                     &=& q L_{\succ}(x) L_{\succ}(y)v.                      
\eeqs
 Hence,  $ L_{\succ}(x \ast y)v = q L_{\succ}(x) L_{\succ}(y)v $.  Moreover, 
\beqs
R_{\prec}(x \ast y)v = v\prec (x \ast y) = q^{-1} (v \prec x)\prec y = q^{-1} R_{\prec}(y) 
R_{\prec}(x)v.
\eeqs 
 Then,  $ R_{\prec}(x \ast y)v = q^{-1} R_{\prec}(y) R_{\prec}(x)v. $  It follows that 
\beqs
L_{\succ}(x)R_{\prec}(y)v = x \succ(v \prec y) = q^{-1} (x \succ v)\prec y 
= q^{-1} R_{\prec}(y)L_{\succ}(x)v  .
\eeqs
 Therefore,  $ L_{\succ}(x)R_{\prec}(y)v = q^{-1} R_{\prec}(y)L_{\succ}(x)v $.

$ \hfill \square $

 We call $ (\mathcal{A}, \ast) $ the associated q-generalized associative algebra of $ 
(\mathcal{A}, \prec, \succ) $ and $ (\mathcal{A}, \succ, \prec) $ is called a compatible q-generalized dendriform algebra structure on the q-generalized associative algebra $ (\mathcal{A}, \ast) $.

\begin{definition} \label{q-DA definition}
 Let $(\mathcal{A}, \succ, \prec)$ be a q-generalized dendriform algebra and $ V $ a vector space. 
Let $ l_{\succ}, r_{\succ}, l_{\prec}, r_{\prec} : \mathcal{A} \rightarrow gl(V) $ be four linear maps. 
Then, $(l_{\succ}, r_{\succ}, l_{\prec}, r_{\prec}, V)$ is called a \textbf{bimodule} of $ \mathcal{A} $
 if the following equations hold for any $ x, y \in \mathcal{A} $:
\beq
l_{\prec}(x \prec y) = q l_{\prec}(x)l_{\ast}(y), r_{\prec}(x)l_{\prec}(y)= q l_{\prec}(y)r_{\ast}(x), r_{\prec}(x)r_{\prec}(y) = q r_{\prec}(y \ast x), \eeq
\beq
l_{\prec}(x \succ y) = q l_{\succ}(x)l_{\prec}(y), r_{\prec}(x)l_{\succ}(y) = q l_{\succ}(y)r_{\prec}(x), r_{\prec}(x)r_{\succ}(y) = q r_{\succ}(y\prec x), 
\eeq
\beq
l_{\succ}(x \ast y) = q l_{\succ}(x)l_{\succ}(y), r_{\succ}(x)l_{\ast}(y)= q l_{\succ}(y)r_{\succ}(x), r_{\succ}(x)r_{\ast}(y) = q r_{\succ}(y \succ x),
\eeq
where $ x \ast y = x \succ y + x \prec y, l_{\ast} = l_{\succ} + l_{\prec}, r_{\ast} = r_{\succ} + r_{\prec} $.
\end{definition}

By a direct computation, $(l_{\succ}, r_{\succ}, l_{\prec}, r_{\prec}, V)$ is a bimodule of a q-generalized dendriform algebra $ (\mathcal{A},\succ, \prec) $ if and only if there exists a q-generalized dendriform algebra structure on the direct 
sum $ \mathcal{A} \oplus V $ of the underlying vector spaces of $\mathcal{A}$ given by
\beqs
(x + u) \succ (y + v) = x \succ y + l_{\succ}(x)v + r_{\succ}(y)u, \cr
(x + u) \prec (y + v) = x \prec y + l_{\prec}(x)v + r_{\prec}(y)u,
\eeqs
for all $ x, y \in \mathcal{A}, u, v \in V $. We denote it by $ \mathcal{A} \ltimes_{l_{\succ},r_{\succ},
l_{\prec}, r_{\prec}} V$.

\begin{proposition} \label{Proposition of bimodules}
Let  $(l_{\succ}, r_{\succ}, l_{\prec}, r_{\prec}, V)$ be a bimodule of a q-generalized dendriform algebra $(\mathcal{A}, 
\succ, \prec)$. Let $(\mathcal{A}, \ast)$ be the associated q-generalized associative algebra. Then, we have the following
 results: 
\benum
\item[(1)] Both $(l_{\succ}, r_{\prec}, V) $ and $ (l_{\succ} + l_{\prec}, r_{\succ} + r_{\prec}, V) $ are bimodules of $ (\mathcal{A}, \ast); $ 
\item[(2)] For any bimodule $(l, r, V)$ of $(\mathcal{A}, \ast)$, $(l, 0, 0, r, V)$ is a bimodule of
 $(\mathcal{A}, \succ, \prec); $ 
\item[(3)] Both $(l_{\succ} + l_{\prec}, 0, 0,  r_{\succ} + r_{\prec}, V)$ and $(l_{\succ}, 0, 0, r_{\prec}, V)$ are bimodules
 of $(\mathcal{A}, \succ, \prec);$ 
\item[(4)]  The q-generalized dendriform algebras $ \mathcal{A} \ltimes_{l_{\succ}, r_{\succ}, l_{\prec}, r_{\prec}} V $ and  $ \mathcal{A} \ltimes_{l_{\succ} +  l_{\prec} , 0, 0, r_{\succ} + r_{\prec}} V $ have the same associated
q-generalized associative algebra $ \mathcal{A} \ltimes_{l_{\succ} +  l_{\prec}, r_{\succ} + r_{\prec} } V$. 
\eenum
\end{proposition}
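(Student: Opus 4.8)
The plan is to prove the four items in the order (1), (2), (3), (4), using only the nine defining identities of a bimodule of a $q$-generalized dendriform algebra recorded in Definition \ref{q-DA definition}, Proposition \ref{pro2}, and the two semidirect-product characterizations already established in the text (for $q$-generalized associative algebras and for $q$-generalized dendriform algebras). Items (3) and (4) will then be essentially formal consequences of (1) and (2).

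For (1), the assertion that $(l_{\succ}, r_{\prec}, V)$ is a bimodule of $(\mathcal{A},\ast)$ is a direct repackaging of three of the nine identities: the first identity of the third line of Definition \ref{q-DA definition} is already $l_{\succ}(x\ast y) = q\,l_{\succ}(x)l_{\succ}(y)$; rewriting the third identity of the first line gives $r_{\prec}(x\ast y) = q^{-1} r_{\prec}(y)r_{\prec}(x)$; and rewriting the second identity of the second line gives $l_{\succ}(x)r_{\prec}(y) = q^{-1} r_{\prec}(y)l_{\succ}(x)$. These are exactly the three defining relations of a bimodule of the $q$-generalized associative algebra $(\mathcal{A},\ast)$ (compare the computation in Proposition \ref{pro2} for the operators $L_{\succ}, R_{\prec}$). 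For the second assertion, about $(l_{\succ}+l_{\prec}, r_{\succ}+r_{\prec}, V) = (l_{\ast}, r_{\ast}, V)$, the quickest route is structural: $\mathcal{A}\ltimes_{l_{\succ}, r_{\succ}, l_{\prec}, r_{\prec}} V$ is itself a $q$-generalized dendriform algebra, so by Proposition \ref{pro2} its associated $\ast$-algebra is $q$-generalized associative; since that associated product on $\mathcal{A}\oplus V$ reads $(x+u)\ast(y+v) = x\ast y + l_{\ast}(x)v + r_{\ast}(y)u$, the proposition characterizing the semidirect product $\mathcal{A}\ltimes_{l,r}V$ in the associative setting forces $(l_{\ast}, r_{\ast}, V)$ to be a bimodule of $(\mathcal{A},\ast)$. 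Alternatively one checks the three relations by hand: substituting $l_{\ast} = l_{\succ}+l_{\prec}$ and summing the relevant dendriform-bimodule identities, the cross terms telescope, e.g. $l_{\ast}(x\ast y) = q\,l_{\succ}(x)l_{\ast}(y) + q\,l_{\prec}(x)l_{\ast}(y) = q\,l_{\ast}(x)l_{\ast}(y)$, and similarly for the other two.

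For (2), I would set $l_{\succ} = l$, $r_{\succ} = 0$, $l_{\prec} = 0$, $r_{\prec} = r$, so that $l_{\ast} = l$ and $r_{\ast} = r$, and run through the nine identities of Definition \ref{q-DA definition}. Six of them collapse to $0 = 0$ because one side carries a factor $0$; the surviving three — the third of the first line, the second of the second line, the first of the third line — reduce to $r(x)r(y) = q\,r(y\ast x)$, $r(x)l(y) = q\,l(y)r(x)$, and $l(x\ast y) = q\,l(x)l(y)$, which are precisely the three defining relations of the bimodule $(l, r, V)$ of $(\mathcal{A},\ast)$ after the obvious rearrangement. Item (3) is then immediate: apply (2) to the two bimodules of $(\mathcal{A},\ast)$ furnished by (1), namely $(l_{\ast}, r_{\ast}, V)$ and $(l_{\succ}, r_{\prec}, V)$, to conclude that $(l_{\ast}, 0, 0, r_{\ast}, V)$ and $(l_{\succ}, 0, 0, r_{\prec}, V)$ are bimodules of $(\mathcal{A},\succ,\prec)$.

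For (4), I would unwind the definition of the associated $q$-generalized associative algebra, whose product is $\ast = \succ + \prec$. On $\mathcal{A}\ltimes_{l_{\succ}, r_{\succ}, l_{\prec}, r_{\prec}} V$ this gives $(x+u)\ast(y+v) = x\ast y + (l_{\succ}+l_{\prec})(x)v + (r_{\succ}+r_{\prec})(y)u = x\ast y + l_{\ast}(x)v + r_{\ast}(y)u$; on $\mathcal{A}\ltimes_{l_{\ast}, 0, 0, r_{\ast}} V$ the $\succ$-part contributes $l_{\ast}(x)v$ and nothing on the right, the $\prec$-part contributes $r_{\ast}(y)u$ and nothing on the left, and the sum is again $x\ast y + l_{\ast}(x)v + r_{\ast}(y)u$. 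Hence both associated algebras coincide with $\mathcal{A}\ltimes_{l_{\ast}, r_{\ast}} V$. No step here presents a genuine obstacle; the only thing to watch is the correct shuffling of the $q^{\pm1}$ factors when turning the dendriform-bimodule identities into associative-bimodule form in (1) and (2), and the telescoping of the cross terms in the $(l_{\ast}, r_{\ast})$ computation.
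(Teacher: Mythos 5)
Your proposal is correct and follows essentially the same route as the paper: items (1) and (2) are verified by direct substitution into the nine identities of Definition \ref{q-DA definition} with the appropriate $q^{\pm 1}$ bookkeeping, and (4) by expanding $\ast=\succ+\prec$ on the semidirect product. Your treatment of (3) as an immediate corollary of (1) and (2) is a slightly cleaner organization than the paper's ``reason similarly to (2),'' and your optional structural argument for $(l_{\ast},r_{\ast},V)$ via the two semidirect-product characterizations is a valid shortcut, but neither constitutes a genuinely different approach.
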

\textbf{Proof: }
By computing, we have:
\benum
\item $l_{\succ}(x\ast y)= q l_{\succ}(x)l_{\succ}(y),$ $r_{\prec}(x\ast y)= q^{-1}r_{\prec}(y)r_{\prec}(x),$ $l_{\succ}(x)r_{\prec}(y)= q^{-1}r_{\prec}(y)l_{\succ}(x).$ Then $(l_{\succ}, r_{\prec}, V)$ is bimodule of $(\mathcal{A}, \ast)$.\\
$(l_{\succ} + l_{\prec})(x\ast y)= (l_{\succ})(x\ast y) + (l_{\prec})(x\ast y)= q l_{\succ}(x)l_{\succ}(y) 
+ (l_{\prec})(x\prec y) + (l_{\prec})(x\succ y)= q l_{\succ}(x)l_{\succ}(y) + q l_{\prec}(x)l_{\ast}(y) + 
q l_{\succ}(x)l_{\prec}(y)= q (l_{\succ} + l_{\prec})(x)(l_{\succ} + l_{\prec})(y).$ By the same 
procedure, we establish the other relationships. Hence, $ (l_{\succ} + l_{\prec}, r_{\succ} + r_{\prec}, 
V) $ is bimodule of $(\mathcal{A}, \ast)$.
\item Setting $l_{\succ}=l, r_{\succ}=0, l_{\prec}=0, r_{\prec}=r$ in the deninition \ref{q-DA definition}  we have : $r(y\ast x)= q^{-1}r(x)r(y), l(y)r(x)= q^{-1}r(x)l(y), l(x\ast y)= ql(x)l(y).$ Then, $(l, 0, 0, r, V)$ is a bimodule of $(\mathcal{A}, \succ, \prec)$.
\item We reason similar to (2).
\item Using $(x + u)\ast (y + v)= (x + u)\succ (y + v)  + (x + u)\prec (y + v), \forall x, y\in \A, u, v\in V,$ with $(x + u) \succ (y + v) = x \succ y + l_{\succ}(x)v + r_{\succ}(y)u, 
(x + u) \prec (y + v) = x \prec y + l_{\prec}(x)v + r_{\prec}(y)u, $ we establish the result.
\eenum
$ \hfill \square $

\begin{example}
Let $(\mathcal{A}, \prec, \succ)$ be a dendriform algebra. Then,  
\beqs
(L_{\succ}, R_{\succ}, L_{\prec}, R_{\prec}, \mathcal{A}), (L_{\succ}, 0, 0, R_{\prec}, \mathcal{A}),
 (L_{\succ} + L_{\prec}, 0, 0,  R_{\succ} + R_{\prec}, \mathcal{A}) 
\eeqs
are bimodules of $(\mathcal{A}, \prec, \succ)$. 
\end{example}.

\begin{theorem}
Let $(\mathcal{A}, \succ_{\mathcal{A}}, \prec_{\mathcal{A}})$ and $(\mathcal{B}, \succ_{\mathcal{B}}, \prec_{\mathcal{B}})$
 be two q-generalized dendriform algebras. Suppose that there are linear maps 
$ l_{\succ_{\mathcal{A}}},   r_{\succ_{\mathcal{A}}},  l_{\prec_{\mathcal{A}}},  r_{\prec_{\mathcal{A}}} : \mathcal{A} \rightarrow gl(\mathcal{B})$ 
and $ l_{\succ_{\mathcal{B}}},   r_{\succ_{\mathcal{B}}},  l_{\prec_{\mathcal{B}}},  r_{\prec_{\mathcal{B}}} : \mathcal{A} \rightarrow gl(\mathcal{A})$ 
such that ( $ l_{\succ_{\mathcal{A}}},   r_{\succ_{\mathcal{A}}},  l_{\prec_{\mathcal{A}}},  r_{\prec_{\mathcal{A}}}, \mathcal{B}$) is a bimodule of $ \mathcal{A}
 $ and  ( $ l_{\succ_{\mathcal{B}}},   r_{\succ_{\mathcal{B}}},  l_{\prec_{\mathcal{B}}},  r_{\prec_{\mathcal{B}}}, \mathcal{A}$) is a bimodule  of 
$ \mathcal{B} $ 
and they satisfy the following equations :
\beq \label{eq35}
r_{\prec_{\mathcal{A}}}(x)(a \prec_{\mathcal{B}} b) = q a \prec_{\mathcal{B}}( r_{\mathcal{A}}(x)b) + q r_{\prec_{\mathcal{A}}}(l_{\mathcal{B}}(x)a),
\eeq
\beq \label{eq36}
l_{\prec_{\mathcal{A}}}(l_{\prec_{\mathcal{B}}}(x))b + (r_{\prec_{\mathcal{A}}}(x)a) \prec_{\mathcal{B}} b = q a \prec_{\mathcal{B}}
 (l_{\mathcal{A}}(x)b) + q r_{\prec_{\mathcal{A}}}(r_{\mathcal{B}}(b)x)a,
\eeq
\beq \label{eq37}
l_{\prec_{\mathcal{A}}}(x)(a \ast_{\mathcal{B}} b) = q^{-1} (l_{\prec_{\mathcal{A}}}(x)a) \prec_{\mathcal{B}} b +
q^{-1} l_{\prec_{\mathcal{A}}}(r_{\prec_{\mathcal{B}}}(a)x)b, 
\eeq
\beq \label{eq38}
r_{\prec_{\mathcal{A}}}(x)(a \succ_{\mathcal{B}} b) = q r_{\succ_{\mathcal{A}}}(l_{\prec_{\mathcal{B}}}(b)x)a + q a \succ_{\mathcal{B}} (r_{\prec_{\mathcal{A}}}(x)b), 
\eeq
\beq \label{eq39}
l_{\prec_{\mathcal{A}}}(l_{\succ_{\mathcal{B}}}(a)x)b + (r_{\succ_{\mathcal{A}}}(x)a) \prec_{\mathcal{B}} b = 
q a \succ_{B} (l_{\prec_{\mathcal{A}}}(x)b) + q r_{\succ_{\mathcal{A}}}(r_{\prec_{\mathcal{B}}}(b)x)a 
\eeq
\beq \label{eq40}
l_{\succ_{\mathcal{A}}}(x)(a \prec_{\mathcal{B}} b) = q^{-1} ( l_{\succ_{\mathcal{A}}}(x)a) \prec_{\mathcal{B}} b + q^{-1} l_{\prec_{\mathcal{A}}}(r_{\succ_{\mathcal{B}}}(a)x)b,
\eeq
\beq \label{eq41}
r_{\succ_{\mathcal{A}}}(x)(a \ast_{\mathcal{B}} b) = q a \succ_{\mathcal{B}} (r_{\succ_{\mathcal{A}}}(x)b) + q r_{\succ_{\mathcal{A}}}(l_{\succ_{\mathcal{B}}}(b)x)a, 
\eeq
\beq \label{eq42}
a \succ_{\mathcal{B}} (l_{\succ_{\mathcal{A}}}(x)b) + r_{\succ_{\mathcal{A}}}(r_{\succ_{\mathcal{B}}}(b)x)a 
= q^{-1} l_{\succ_{\mathcal{A}}}(l_{\mathcal{B}}(a)x)b + q^{-1} (r_{\mathcal{A}}(x)a) \succ_{\mathcal{B}} b, 
\eeq
\beq \label{eq43}
l_{\succ_{\mathcal{A}}}(x)(a \succ_{\mathcal{B}} b) = q^{-1} (l_{\mathcal{A}}(x)a) \succ_{\mathcal{B}} b + q^{-1} l_{\succ_{\mathcal{A}}}(r_{\mathcal{B}}(a)x)b,
\eeq
\beq \label{eq44} 
r_{\prec_{\mathcal{B}}}(a)(x \prec_{\mathcal{A}} y) = q x \prec_{\mathcal{A}} (r_{\mathcal{B}}(a)y) + q r_{\prec_{\mathcal{B}}}(l_{\mathcal{A}}(y)a)x,\eeq
\beq \label{eq45}
l_{\prec_{B}}(l_{\prec_{\mathcal{A}}}(x)a)y + (r_{\prec_{\mathcal{B}}}(a)x) \prec_{\mathcal{A}} y = 
q x \prec_{\mathcal{A}} (l_{\mathcal{B}}(a)y) + q r_{\prec_{\mathcal{B}}}(r_{\mathcal{A}}(y)a)x, 
\eeq
\beq \label{eq46}
l_{\prec_{\mathcal{B}}}(a)(x \ast_{\mathcal{A}} y) = q^{-1} (l_{\prec_{\mathcal{B}}}(a)x) \prec_{\mathcal{A}} y + q^{-1} l_{\prec_{\mathcal{B}}}(r_{\prec_{\mathcal{A}}}(x)a)y, 
\eeq
\beq \label{eq47}
r_{\prec_{\mathcal{B}}}(a)(x \succ_{\mathcal{A}} y) = q r_{\succ_{\mathcal{B}}}(l_{\prec_{\mathcal{B}}}(y)a)x + q x \succ_{\mathcal{A}} (r_{\prec_{\mathcal{B}}}(a)y), 
\eeq
\beq \label{eq48}
l_{\prec_{\mathcal{B}}}(l_{\succ_{\mathcal{A}}}(x)a)y + (r_{\succ_{\mathcal{B}}}(a)x) \prec_{\mathcal{A}} y = q x \succ_{\mathcal{A}} (l_{\prec_{\mathcal{B}}}(a)y) + q r_{\succ_{\mathcal{B}}}(r_{\prec_{\mathcal{A}}}(y)a)x, 
\eeq
\beq \label{eq49}
l_{\succ_{\mathcal{B}}}(a)(x \prec_{\mathcal{A}} y) = q^{-1} (l_{\succ_{\mathcal{B}}}(a)x) \prec_{\mathcal{A}} y + q^{-1} l_{\prec_{\mathcal{B}}}(r_{\succ_{\mathcal{A}}}(x)a)y, 
\eeq
\beq \label{eq50}
 r_{\succ_{\mathcal{B}}}(a)(x \ast_{\mathcal{A}} y) = q x \succ_{\mathcal{A}} (r_{\succ_{\mathcal{B}}}(a)y) + q r_{\succ_{\mathcal{B}}}(l_{\succ_{\mathcal{A}}}(y)a)x, 
 \eeq
 \beq \label{eq51}
 x \succ_{\mathcal{A}} (l_{\succ_{\mathcal{B}}}(a)y) + r_{\succ_{\mathcal{B}}}(r_{\succ_{\mathcal{A}}}(y)a)x =
q^{-1} l_{\succ_{\mathcal{B}}}(l_{\mathcal{A}}(x)a)y + q^{-1} (r_{B}(a)x) \succ_{\mathcal{A}} y, 
 \eeq
 \beq \label{eq52}
l_{\succ_{\mathcal{B}}}(a)(x \succ_{\mathcal{A}} y) = q^{-1} (l_{\mathcal{B}}(a)x) \succ_{\mathcal{A}} y + 
q^{-1} l_{\succ_{\mathcal{B}}}(r_{\mathcal{A}}(x)a)y
\eeq
for any $ x, y \in \mathcal{A}, a, b \in \mathcal{B} $ and $ l_{\mathcal{A}} = l_{\succ_{\mathcal{A}}} + 
 l_{\prec_{\mathcal{A}}}, r_{\mathcal{A}} =  r_{\succ_{\mathcal{A}}} +  r_{\prec_{\mathcal{A}}}, l_{\mathcal{B}} =
 l_{\succ_{\mathcal{B}}} +  l_{\prec_{\mathcal{B}}} , r_{\mathcal{B}} =  r_{\succ_{\mathcal{B}}} +  r_{\prec_{\mathcal{B}}} $.
 Then, there is a q-generalzed dendriform algebra structure on the direct sum $ \mathcal{A} \oplus \mathcal{B} $ of the underlying vector spaces of
 $ \mathcal{A} $ and $ \mathcal{B} $ given by
\beqs
(x + a) \succ ( y + b ) = (x \succ_{\mathcal{A}} y + r_{\succ_{\mathcal{B}}}(b)x + l_{\succ_{\mathcal{B}}}(a)y) +
 (l_{\succ_{\mathcal{A}}}(x)b + r_{\succ_{\mathcal{A}}}(y)a + a \succ_{\mathcal{B}} b ), \cr \cr
(x + a) \prec ( y + b ) = (x \prec_{\mathcal{A}} y + r_{\prec_{\mathcal{B}}}(b)x + l_{\prec_{\mathcal{B}}}(a)y) + 
(l_{\prec_{\mathcal{A}}}(x)b + r_{\prec_{\mathcal{A}}}(y)a + a \prec_{\mathcal{B}} b )
\eeqs
for any $ x, y \in \mathcal{A}, a, b \in \mathcal{B} $. Let $ \mathcal{A} \bowtie^{l_{\succ_{\mathcal{A}}}, r_{\succ_{\mathcal{A}}}, 
l_{\prec_{\mathcal{A}}}, r_{\prec_{\mathcal{A}}}}_{l_{\succ_{\mathcal{B}}}, r_{\succ_{\mathcal{B}}}, l_{\prec_{\mathcal{B}}}, 
r_{\prec_{\mathcal{B}}}} \mathcal{B} $ or simply $ \mathcal{A} \bowtie \mathcal{B} $  denote this q- generaized dendriform algebra. On the other hand, every q-generlized dendriform algebra which is the direct sum of the underlying vector spaces of two subalgebra can be obtained in  this way.  
\end{theorem}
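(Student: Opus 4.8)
The plan is to check directly that the two products $\succ$ and $\prec$ displayed on $\mathcal{A}\oplus\mathcal{B}$ satisfy the three defining identities of a q-generalized dendriform algebra, and then to obtain the converse by reading the same computation backwards. A useful preliminary observation is that adding the two formulas, and writing $l_{\mathcal{A}}=l_{\succ_{\mathcal{A}}}+l_{\prec_{\mathcal{A}}}$, $r_{\mathcal{A}}=r_{\succ_{\mathcal{A}}}+r_{\prec_{\mathcal{A}}}$ and likewise for $\mathcal{B}$, gives
\[
(x+a)\ast(y+b)=\bigl(x\ast_{\mathcal{A}}y+l_{\mathcal{B}}(a)y+r_{\mathcal{B}}(b)x\bigr)+\bigl(a\ast_{\mathcal{B}}b+l_{\mathcal{A}}(x)b+r_{\mathcal{A}}(y)a\bigr),
\]
which is exactly the product (\ref{product of matched pair}) of the associative matched pair of $(\mathcal{A},\ast_{\mathcal{A}})$ and $(\mathcal{B},\ast_{\mathcal{B}})$ built from $(l_{\mathcal{A}},r_{\mathcal{A}})$ and $(l_{\mathcal{B}},r_{\mathcal{B}})$. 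By Proposition \ref{Proposition of bimodules} these sums are bimodules of the associated q-generalized associative algebras, and summing the appropriate triples among the equations (\ref{eq35})--(\ref{eq52}) yields (\ref{matched pair 1})--(\ref{matched pair 6}); hence Theorem \ref{matched pair theorem} already supplies all the $\ast$-level identities, and only the finer $\prec/\succ$ refinements remain.

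For those refinements I would, for each of the three dendriform axioms in turn, take general elements $x+a$, $y+b$, $z+c$ with $x,y,z\in\mathcal{A}$ and $a,b,c\in\mathcal{B}$, expand both sides using the definitions of $\succ$, $\prec$ and $\ast$ on $\mathcal{A}\oplus\mathcal{B}$, and then sort the resulting terms by the direct sum decomposition and by which of the maps $l_{\succ_{\mathcal{A}}},r_{\succ_{\mathcal{A}}},\dots$ occur. By multilinearity each axiom reduces to the family of identities obtained by letting each of its three slots be drawn from $\mathcal{A}$ or from $\mathcal{B}$. When all three lie in $\mathcal{A}$ one recovers the dendriform axiom of $(\mathcal{A},\succ_{\mathcal{A}},\prec_{\mathcal{A}})$, when all lie in $\mathcal{B}$ that of $(\mathcal{B},\succ_{\mathcal{B}},\prec_{\mathcal{B}})$, both true by hypothesis. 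When two slots lie in $\mathcal{A}$ and one in $\mathcal{B}$, the $\mathcal{B}$-component of the identity reproduces term by term one of the bimodule axioms of Definition \ref{q-DA definition} for $(l_{\succ_{\mathcal{A}}},r_{\succ_{\mathcal{A}}},l_{\prec_{\mathcal{A}}},r_{\prec_{\mathcal{A}}},\mathcal{B})$, while the $\mathcal{A}$-component is one of (\ref{eq44})--(\ref{eq52}); symmetrically, two slots in $\mathcal{B}$ and one in $\mathcal{A}$ produce in the $\mathcal{A}$-component a bimodule axiom for $(l_{\succ_{\mathcal{B}}},r_{\succ_{\mathcal{B}}},l_{\prec_{\mathcal{B}}},r_{\prec_{\mathcal{B}}},\mathcal{A})$ and in the $\mathcal{B}$-component one of (\ref{eq35})--(\ref{eq43}), distributed three to each of the three dendriform axioms. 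Since all of these hold by assumption, the three identities hold on $\mathcal{A}\oplus\mathcal{B}$, so $(\mathcal{A}\oplus\mathcal{B},\succ,\prec)$ is a q-generalized dendriform algebra whose associated q-generalized associative algebra is the matched pair of the first paragraph.

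For the converse, let $(\mathcal{D},\succ,\prec)$ be a q-generalized dendriform algebra whose underlying vector space is $\mathcal{A}\oplus\mathcal{B}$ with $\mathcal{A}$ and $\mathcal{B}$ subalgebras. I would define the eight maps by projecting the mixed products: for $a\in\mathcal{B}$ and $x\in\mathcal{A}$, take $l_{\succ_{\mathcal{B}}}(a)x$ and $r_{\succ_{\mathcal{A}}}(x)a$ to be the $\mathcal{A}$- and $\mathcal{B}$-components of $a\succ x$, and define $l_{\succ_{\mathcal{A}}},r_{\succ_{\mathcal{B}}},l_{\prec_{\mathcal{B}}},r_{\prec_{\mathcal{A}}},l_{\prec_{\mathcal{A}}},r_{\prec_{\mathcal{B}}}$ analogously, so that the two products of $\mathcal{D}$ take exactly the displayed form. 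Substituting these back into the three dendriform axioms of $\mathcal{D}$ and again sorting by components: the all-$\mathcal{A}$ and all-$\mathcal{B}$ pieces hold because $\mathcal{A}$ and $\mathcal{B}$ are subalgebras, and the mixed pieces are forced to be precisely the bimodule axioms of Definition \ref{q-DA definition} together with the relations (\ref{eq35})--(\ref{eq52}); thus the resulting data form a matched pair and $\mathcal{D}=\mathcal{A}\bowtie\mathcal{B}$. I expect the only genuine obstacle to be the bookkeeping: the three expansions generate a large number of terms, and keeping the correspondence with the eighteen numbered equations transparent is best done by handling the dendriform axioms one at a time and, within each, reading off the contribution of every choice of slots separately.
$ \hfill \square $
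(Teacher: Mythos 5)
Your proposal is correct and follows essentially the same route as the paper, which simply states that the proof is a direct verification analogous to the matched pair theorem for $q$-generalized associative algebras: expand the three dendriform axioms on $\mathcal{A}\oplus\mathcal{B}$ for all choices of slots and identify each component with a bimodule axiom, one of (\ref{eq35})--(\ref{eq52}), or an axiom of $\mathcal{A}$ or $\mathcal{B}$. Your case-by-case bookkeeping (and your treatment of the converse) is in fact more explicit than what the paper records.
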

\textbf{Proof: }
The Proof  is obtained in a similar way as for Theorem \ref{matched pair theorem}.

$ \hfill \square $

\begin{definition}
Let $ (\mathcal{A}, \succ_{\mathcal{A}}, \prec_{\mathcal{A}}) $ and $  (\mathcal{B}, \succ_{\mathcal{B}}, \prec_{\mathcal{B}}) $ 
be two q-generalized dendriform algebras. Suppose that there are linear maps 
$ l_{\succ_{\mathcal{A}}}, r_{\succ_{\mathcal{A}}}, l_{\prec_{\mathcal{A}}}, r_{\prec_{\mathcal{A}}} : \mathcal{A} \rightarrow gl(\mathcal{B}) $ 
and $ l_{\succ_{\mathcal{B}}}, r_{\succ_{\mathcal{B}}}, l_{\prec_{\mathcal{B}}}, r_{\prec_{\mathcal{B}}} : \mathcal{B} \rightarrow gl(\mathcal{A}) $
 such that $(l_{\succ_{\mathcal{A}}}, r_{\succ_{\mathcal{A}}}, l_{\prec_{\mathcal{A}}}, r_{\prec_{\mathcal{A}}})$ is a bimodule of $ \mathcal{A} $ 
and $(l_{\succ_{\mathcal{B}}}, r_{\succ_{\mathcal{B}}}, l_{\prec_{\mathcal{B}}}, r_{\prec_{\mathcal{B}}})$ is a bimodule of $ \mathcal{B} $. 
If Eqs.\ref{eq35} - \ref{eq52} are satisfied, then $(\mathcal{A}, \mathcal{B},l_{\succ_{\mathcal{A}}}, 
r_{\succ_{\mathcal{A}}}, l_{\prec_{\mathcal{A}}}, r_{\prec_{\mathcal{A}}},l_{\succ_{\mathcal{B}}}, r_{\succ_{\mathcal{B}}}, l_{\prec_{\mathcal{B}}},
 r_{\prec_{\mathcal{B}}})$ is called a \textbf{matched pair of q-generalized dendriform algebras}.   
\end{definition}

\begin{remark}
Obviously $ \mathcal{B} $ is an ideal of $ \mathcal{A} \bowtie \mathcal{B} $ if and only if
 $ l_{\succ_{\mathcal{B}}}= r_{\succ_{\mathcal{B}}}= l_{\prec_{\mathcal{B}}}= r_{\prec_{\mathcal{B}}} =0 $. If $ \mathcal{B} $ 
is a trivial ideal, then 
$ \mathcal{A} \bowtie^{l_{\succ_{\mathcal{A}}}, r_{\succ_{\mathcal{A}}}, l_{\prec_{\mathcal{A}}}, r_{\prec_{\mathcal{A}}}}_{0, 0, 0, 0} 
\mathcal{B} \cong \mathcal{A} \ltimes_{l_{\succ_{\mathcal{A}}}, r_{\succ_{\mathcal{A}}}, l_{\prec_{\mathcal{A}}}, r_{\prec_{\mathcal{A}}}} 
\mathcal{B} $. 
\end{remark}
\begin{corollary}\label{corollary 3.2.7}
Let $ (\mathcal{A}, \mathcal{B}, l_{\succ_{\mathcal{A}}}, r_{\succ_{\mathcal{A}}}, l_{\prec_{\mathcal{A}}}, r_{\prec_{\mathcal{A}}},
 l_{\succ_{\mathcal{B}}}, r_{\succ_{\mathcal{B}}}, l_{\prec_{\mathcal{B}}}, r_{\prec_{\mathcal{B}}}) $ be a matched pair of q-generalized dendriform algebras. 
Then, $(\mathcal{A}, \mathcal{B}, l_{\succ_{\mathcal{A}}} + l_{\prec_{\mathcal{A}}}, r_{\succ_{\mathcal{A}}} + r_{\prec_{\mathcal{A}}},  
l_{\succ_{\mathcal{B}}} + l_{\prec_{\mathcal{B}}},  r_{\succ_{\mathcal{B}}} + r_{\prec_{\mathcal{B}}})$ is a matched pair of the associated q-generalized associative algebras $ (\mathcal{A}, \ast_{\mathcal{A}}) $ and  $(\mathcal{B}, \ast_{\mathcal{B}}) $.
\end{corollary}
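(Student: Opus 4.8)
The plan is to reduce the statement to two facts already established in the excerpt. By the definition of a matched pair of $q$-generalized associative algebras, one must show: (a) that $l_{\A}:=l_{\succ_{\A}}+l_{\prec_{\A}}$ and $r_{\A}:=r_{\succ_{\A}}+r_{\prec_{\A}}$ give a bimodule of the associated $q$-generalized associative algebra $(\A,\ast_{\A})$, and symmetrically that $l_{\B}:=l_{\succ_{\B}}+l_{\prec_{\B}}$, $r_{\B}:=r_{\succ_{\B}}+r_{\prec_{\B}}$ give a bimodule of $(\B,\ast_{\B})$; and (b) that the compatibility identities (\ref{matched pair 1})--(\ref{matched pair 6}) hold for the quadruple $(l_{\A},r_{\A},l_{\B},r_{\B})$ with $\cdot=\ast_{\A}$ and $\circ=\ast_{\B}$. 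Part (a) is immediate: since $(l_{\succ_{\A}},r_{\succ_{\A}},l_{\prec_{\A}},r_{\prec_{\A}},\B)$ and $(l_{\succ_{\B}},r_{\succ_{\B}},l_{\prec_{\B}},r_{\prec_{\B}},\A)$ are bimodules of the $q$-generalized dendriform algebras $\A$ and $\B$ by hypothesis, Proposition~\ref{Proposition of bimodules}(1), applied with $V=\B$ and with $V=\A$ respectively, gives exactly that $(l_{\A},r_{\A},\B)$ is a bimodule of $(\A,\ast_{\A})$ and $(l_{\B},r_{\B},\A)$ is a bimodule of $(\B,\ast_{\B})$.

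For part (b) the plan is to recover each of (\ref{matched pair 1})--(\ref{matched pair 6}) as a plain sum of three of the eighteen identities (\ref{eq35})--(\ref{eq52}) defining the dendriform matched pair. Expanding $l_{\A}(x)=l_{\succ_{\A}}(x)+l_{\prec_{\A}}(x)$ (and likewise all the other maps) together with $a\ast_{\B}b=a\succ_{\B}b+a\prec_{\B}b$ and $x\ast_{\A}y=x\succ_{\A}y+x\prec_{\A}y$, the left side of (\ref{matched pair 1}) becomes $l_{\succ_{\A}}(x)(a\ast_{\B}b)+l_{\prec_{\A}}(x)(a\ast_{\B}b)$, whose first term is the left side of (\ref{eq40}) plus that of (\ref{eq43}) and whose second term is the left side of (\ref{eq37}); adding (\ref{eq37}), (\ref{eq40}), (\ref{eq43}) and collecting the $\succ_{\B}$- and $\prec_{\B}$-terms on the right into $\ast_{\B}$-terms, and the $l_{\succ_{\A}}$- and $l_{\prec_{\A}}$-parts into $l_{\A}$, reproduces (\ref{matched pair 1}) verbatim. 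The same bookkeeping gives, for instance, (\ref{matched pair 3}) as the sum of (\ref{eq46}), (\ref{eq49}), (\ref{eq52}), and (\ref{matched pair 5}) as the sum of (\ref{eq36}), (\ref{eq39}), (\ref{eq42}); the remaining three, (\ref{matched pair 2}), (\ref{matched pair 4}) and (\ref{matched pair 6}), are obtained symmetrically (interchanging the roles of $\A$ and $\B$, resp.\ of $\succ$ and $\prec$) from the corresponding triples among (\ref{eq35})--(\ref{eq52}). Since (\ref{eq35})--(\ref{eq52}) hold by assumption, so do (\ref{matched pair 1})--(\ref{matched pair 6}), and the proof is complete.

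As a conceptual cross-check I would also note the following alternative argument. By the matched pair theorem for $q$-generalized dendriform algebras, $\A\bowtie\B$ is a $q$-generalized dendriform algebra on $\A\oplus\B$, and by Proposition~\ref{pro2} its associated $q$-generalized associative algebra is $(\A\oplus\B,\ast)$ with $\ast=\succ+\prec$. Adding the two displayed dendriform products of $\A\bowtie\B$ shows that $(x+a)\ast(y+b)=(x\ast_{\A}y+l_{\B}(a)y+r_{\B}(b)x)+(a\ast_{\B}b+l_{\A}(x)b+r_{\A}(y)a)$, which is precisely the matched-pair product (\ref{product of matched pair}) for $(\A,\B,l_{\A},r_{\A},l_{\B},r_{\B})$, and $(\A,\ast_{\A})$, $(\B,\ast_{\B})$ sit inside it as subalgebras because $\A$ and $\B$ are each closed under both $\succ$ and $\prec$, hence under $\ast$. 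Decomposing the associativity law $((X)\ast(Y))\ast(Z)=q(X)\ast((Y)\ast(Z))$ of $(\A\oplus\B,\ast)$ along the splitting $\A\oplus\B$ and discarding the pieces already accounted for by the two subalgebra structures and by the bimodule axioms of (a) — that is, running the computation in the proof of Theorem~\ref{matched pair theorem} backwards — leaves exactly (\ref{matched pair 1})--(\ref{matched pair 6}).

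The only genuine difficulty here is organizational: correctly partitioning the eighteen dendriform identities (\ref{eq35})--(\ref{eq52}) into the six relevant triples and checking that the various powers of $q$ and $q^{-1}$ occurring in them line up with those in (\ref{matched pair 1})--(\ref{matched pair 6}) after the $\succ/\prec$ terms are recombined. No new idea is needed beyond Propositions~\ref{pro2} and~\ref{Proposition of bimodules} and bilinearity of the products and the module actions.
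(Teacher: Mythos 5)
Your proposal is correct, and its main line of argument is genuinely different from (and more detailed than) the paper's. The paper's entire proof is what you relegate to a ``conceptual cross-check'': it simply observes that the associated $q$-generalized associative algebra of $\mathcal{A}\bowtie\mathcal{B}$ is the algebra on $\mathcal{A}\oplus\mathcal{B}$ with product $(x+a)\ast(y+b)=x\ast_{\mathcal{A}}y+l_{\mathcal{B}}(a)y+r_{\mathcal{B}}(b)x+a\ast_{\mathcal{B}}b+l_{\mathcal{A}}(x)b+r_{\mathcal{A}}(y)a$, i.e.\ exactly the product of the matched pair $(\mathcal{A},\mathcal{B},l_{\mathcal{A}},r_{\mathcal{A}},l_{\mathcal{B}},r_{\mathcal{B}})$, and stops there. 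Your primary argument instead verifies the definition head-on: Proposition~\ref{Proposition of bimodules}(1) for the two bimodule conditions, and the partition of (\ref{eq35})--(\ref{eq52}) into six triples summing to (\ref{matched pair 1})--(\ref{matched pair 6}). I checked your triples $\{(\ref{eq37}),(\ref{eq40}),(\ref{eq43})\}\Rightarrow(\ref{matched pair 1})$, $\{(\ref{eq46}),(\ref{eq49}),(\ref{eq52})\}\Rightarrow(\ref{matched pair 3})$ and $\{(\ref{eq36}),(\ref{eq39}),(\ref{eq42})\}\Rightarrow(\ref{matched pair 5})$ (the last after multiplying (\ref{eq42}) by $q$ and transposing sides), and they do recombine correctly, with the powers of $q$ matching; note only that (\ref{eq35}) and (\ref{eq36}) as printed contain typographical slips (e.g.\ $l_{\prec_{\mathcal{A}}}(l_{\prec_{\mathcal{B}}}(x))b$ should read $l_{\prec_{\mathcal{A}}}(l_{\prec_{\mathcal{B}}}(a)x)b$), which your bookkeeping implicitly corrects. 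What each approach buys: the paper's argument is short and structural but leans on the (only sketched) matched-pair theorem for $q$-generalized dendriform algebras and on Proposition~\ref{pro2}, and it quietly uses that both arguments yield the same product on $\mathcal{A}\oplus\mathcal{B}$ and that a matched-pair structure can be read off from a direct-sum decomposition into subalgebras; your direct summation makes the corollary self-contained at the level of identities and exposes exactly which dendriform axioms feed into which associative axiom. Either route is acceptable; no gap.
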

\textbf{Proof: }
In fact, the associated q-generalized associative algebra $(\mathcal{A} \bowtie \mathcal{B}, \ast)$ is exactly the q-generalized associative algebra obtained 
from the matched pair $(\mathcal{A}, \mathcal{B}, l_{\mathcal{A}}, r_{\mathcal{A}}, l_{\mathcal{B}}, r_{\mathcal{B}})$ of q-generalized associative algebras:
\beqs
(x + a)\ast (y + b) = x\ast_{\mathcal{A}} y + l_{\mathcal{B}}(a)y + r_{\mathcal{B}}(b)x + a \ast_{\mathcal{B}} b +  l_{\mathcal{A}}(x)b + 
r_{\mathcal{A}}(y)a
\eeqs 
for all $ x, y \in \mathcal{A}, a, b \in \mathcal{B} $, where $ l_{\mathcal{A}} = l_{\succ_{\mathcal{A}}} + l_{\prec_{\mathcal{A}}}, r_{\mathcal{A}}
= r_{\succ_{\mathcal{A}}} + r_{\prec_{\mathcal{A}}}, l_{\mathcal{B}} = l_{\succ_{\mathcal{B}}} + l_{\prec_{\mathcal{B}}}, r_{\mathcal{B}}= 
r_{\succ_{\mathcal{B}}} + r_{\prec_{\mathcal{B}}}  $.

$ \hfill \square $

\begin{proposition}
Let ($l_{\succ}, r_{\succ}, l_{\prec}, r_{\prec}, V$) be a bimodule of a q-generalized dendriform algebra $(\mathcal{A}, \succ, \prec)$. Let $(\mathcal{A}, \ast)$ be the associated q-generalized hom-associative algebra.
 Let $ l^{\ast}_{\succ},r^{\ast}_{\succ},l^{\ast}_{\prec}, r^{\ast}_{\prec} : \mathcal{A} 
\rightarrow gl(V^{\ast}) $ be the linear maps given by 
\beqs
\langle l^{\ast}_{\succ}(x)a^{\ast}, y \rangle = \langle l_{\succ}(x)y, a^{\ast} \rangle, 
\langle r^{\ast}_{\succ}(x)a^{\ast}, y \rangle = \langle r_{\succ}(x)y, a^{\ast} \rangle, \cr
\langle l^{\ast}_{\prec}(x)a^{\ast}, y \rangle = \langle l_{\prec}(x)y, a^{\ast} \rangle, 
\langle r^{\ast}_{\prec}(x)a^{\ast}, y \rangle = \langle r_{\prec}(x)y, a^{\ast} \rangle.
\eeqs
Then,
\benum
\item[(1)]
  $(q^{-2}(r^{\ast}_{\succ} +  r^{\ast}_{\prec}), -q^{2}l^{\ast}_{\prec}, -q^{-2}r^{\ast}_{\succ}, q^{2}(l^{\ast}_{\succ} + l^{\ast}_{\prec}), V^{\ast})$ is a bimodule of $(\mathcal{A},\succ,\prec);$ 
\item[(2)] $(q^{-2}(r^{\ast}_{\succ} +  r^{\ast}_{\prec}), 0, 0, q^{2}(l^{\ast}_{\succ} + l^{\ast}_{\prec}), V^{\ast})$
 and $(q^{-2}r^{\ast}_{\prec}, 0, 0, q^{2}l^{\ast}_{\succ}, V^{\ast})$ are bimodules of $(\mathcal{A},\succ,\prec); $ 
\item[(3)] $(q^{-2}(r^{\ast}_{\succ} +  r^{\ast}_{\prec}), q^{2}(l^{\ast}_{\succ} + l^{\ast}_{\prec}), V^{\ast})$ and 
$(q^{-2}r^{\ast}_{\prec}, q^{2}l^{\ast}_{\succ}, V^{\ast})$ are bimodules of $(\mathcal{A}, \ast); $ 
\item[(4)] The q-generalized dendriform algebras  \beqs \mathcal{A} \ltimes_{q^{-2}(r^{\ast}_{\succ} + r^{\ast}_{\prec}), -q^{2}l^{\ast}_{\prec}, -q^{-2}r^{\ast}_{\succ}, q^{2}(l^{\ast}_{\succ} + l^{\ast}_{\prec})} V^{\ast}  \mbox{ and } \mathcal{A} \ltimes_{q^{-2}r^{\ast}_{\prec}, 0, 0, q^{2}l^{\ast}_{\succ}} V^{\ast}\eeqs have the same q-generalized associative algebra 
$\mathcal{A} \ltimes_{q^{-2}r^{\ast}_{\prec}, q^{2}l^{\ast}_{\succ}} V^{\ast} $.
\eenum
\end{proposition}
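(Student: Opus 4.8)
The plan is to establish (1) by a direct dualisation of the nine defining relations of a bimodule of a $q$-generalized dendriform algebra, and then to obtain (2), (3) and (4) as immediate formal consequences of (1) together with Proposition \ref{Proposition of bimodules}.

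First I would introduce the shorthand $\mathcal{L}_{\succ} = q^{-2}(r^{\ast}_{\succ} + r^{\ast}_{\prec})$, $\mathcal{R}_{\succ} = -q^{2}l^{\ast}_{\prec}$, $\mathcal{L}_{\prec} = -q^{-2}r^{\ast}_{\succ}$, $\mathcal{R}_{\prec} = q^{2}(l^{\ast}_{\succ} + l^{\ast}_{\prec})$ for the four maps appearing in (1), and record at once that $\mathcal{L}_{\ast} := \mathcal{L}_{\succ} + \mathcal{L}_{\prec} = q^{-2}r^{\ast}_{\prec}$ and $\mathcal{R}_{\ast} := \mathcal{R}_{\succ} + \mathcal{R}_{\prec} = q^{2}l^{\ast}_{\succ}$; thus the ``$\ast$-components'' of the candidate quadruple are exactly the pair occurring in (3). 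To prove (1) one must check the nine identities of Definition \ref{q-DA definition} for $(\mathcal{L}_{\succ}, \mathcal{R}_{\succ}, \mathcal{L}_{\prec}, \mathcal{R}_{\prec}, V^{\ast})$. This is carried out exactly as in the dual-bimodule lemma for $q$-generalized associative algebras stated earlier in the paper: evaluate each candidate identity on an arbitrary $a^{\ast} \in V^{\ast}$, pair with an arbitrary $v \in \mathcal{A}$, and use the defining relation $\langle f^{\ast}(x)u^{\ast}, v\rangle = \langle f(x)v, u^{\ast}\rangle$ together with $(fg)^{\ast} = g^{\ast}f^{\ast}$ to transport each identity into one of the nine defining relations of the given bimodule $(l_{\succ}, r_{\succ}, l_{\prec}, r_{\prec}, V)$, with its two $\mathcal{A}$-arguments in transposed order. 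Because transposition reverses composition, a ``left-type'' relation for $V$ turns into a ``right-type'' relation for $V^{\ast}$ and vice versa, while the scalar $q$ in each original relation is absorbed by the chosen powers $q^{\pm 2}$; tracking these exponents and signs is the whole content.

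Granting (1), part (2) follows by applying Proposition \ref{Proposition of bimodules}(3) to the bimodule $(\mathcal{L}_{\succ}, \mathcal{R}_{\succ}, \mathcal{L}_{\prec}, \mathcal{R}_{\prec}, V^{\ast})$: its reduced bimodule $(\mathcal{L}_{\succ}, 0, 0, \mathcal{R}_{\prec}, V^{\ast})$ is the first quadruple in (2), and $(\mathcal{L}_{\ast}, 0, 0, \mathcal{R}_{\ast}, V^{\ast}) = (q^{-2}r^{\ast}_{\prec}, 0, 0, q^{2}l^{\ast}_{\succ}, V^{\ast})$ is the second. Likewise, Proposition \ref{Proposition of bimodules}(1) applied to the same bimodule shows that $(\mathcal{L}_{\succ}, \mathcal{R}_{\prec}, V^{\ast})$ and $(\mathcal{L}_{\ast}, \mathcal{R}_{\ast}, V^{\ast})$ are bimodules of $(\mathcal{A}, \ast)$, which is precisely (3). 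Finally, (4) is Proposition \ref{Proposition of bimodules}(4) specialised to this bimodule: since $\mathcal{L}_{\succ} + \mathcal{L}_{\prec} = q^{-2}r^{\ast}_{\prec}$ and $\mathcal{R}_{\succ} + \mathcal{R}_{\prec} = q^{2}l^{\ast}_{\succ}$, the two $q$-generalized dendriform algebras listed in (4) have one and the same associated $q$-generalized associative algebra $\mathcal{A} \ltimes_{q^{-2}r^{\ast}_{\prec}, q^{2}l^{\ast}_{\succ}} V^{\ast}$.

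The main obstacle is entirely the bookkeeping in (1): with nine axioms, four families of operators together with their duals, and the twisting powers $q^{\pm 2}$, it is easy to mismatch which original relation dualises to which new one or to misplace a sign. I would organise the argument as a table --- first list the nine relations of Definition \ref{q-DA definition} for $(l_{\succ}, r_{\succ}, l_{\prec}, r_{\prec}, V)$, then for each of the nine relations of the candidate $V^{\ast}$-bimodule write the pairing against $v$ and read off the matching original relation --- so that the verification becomes one coherent computation rather than nine independent ones, and in particular one checks that a single consistent choice of $q$-powers satisfies all nine relations simultaneously.
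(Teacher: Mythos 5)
Your proposal follows essentially the same route as the paper: part (1) is proved by pairing each of the nine candidate identities against an arbitrary element of $\mathcal{A}$ and transporting it, via the dual pairing and the reversal of composition under transposition, into one of the nine defining relations of $(l_{\succ}, r_{\succ}, l_{\prec}, r_{\prec}, V)$, and parts (2)--(4) are then read off from Proposition \ref{Proposition of bimodules} applied to the bimodule obtained in (1), using $\mathcal{L}_{\succ}+\mathcal{L}_{\prec}=q^{-2}r^{\ast}_{\prec}$ and $\mathcal{R}_{\succ}+\mathcal{R}_{\prec}=q^{2}l^{\ast}_{\succ}$. Your reduction of (2)--(4) is in fact spelled out more carefully than the paper's one-line remark, and the bookkeeping you flag in (1) is exactly what the paper's items (i)--(ix) carry out.
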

\textbf{Proof: }
Show that $(q^{-2}(r^{\ast}_{\succ} +  r^{\ast}_{\prec}), -q^{2}l^{\ast}_{\prec}, -q^{-2}r^{\ast}_{\succ}, q^{2}(l^{\ast}_{\succ} + l^{\ast}_{\prec}), V^{\ast})$ is a bimodule of $(\mathcal{A},\succ,\prec)$. Let $ x, y \in  \mathcal{A}, u^{\ast} \in V^{\ast}, v \in V $, we have :
\begin{enumerate}
\item[(i)]
\beqs
 \langle -q^{-2}r^{\ast}_{\succ}(x\prec y)u^{\ast}, v \rangle = \langle -q^{-2}r_{\succ}(x\prec y)v, u^{\ast} \rangle = \langle -q^{-3}r_{\prec}(y)r_{\succ}(x)v, u^{\ast}\rangle\cr 
= \langle q(-q^{-2}r^{\ast}_{\succ})(x)(q^{-2}r^{\ast}_{\prec})(y)u^{\ast}, v\rangle 
\eeqs
 leading to $(-q^{-2}r^{\ast}_{\succ})(x\prec y)u^{\ast}= q(-q^{-2}r^{\ast}_{\succ})(x)(q^{-2}r^{\ast}_{\prec})(y)u^{\ast}$;

\item[(ii)]
\beqs
 \langle (q^{2}l^{\ast}_{\ast}(x))(-q^{-2}r^{\ast}_{\succ}(y))u^{\ast}, v\rangle = \langle -r_{\succ}(y)l_{\ast}(x)v, u^{\ast} \rangle\cr
= \langle -ql_{\succ}(x)r_{\succ}(y)v, u^{\ast}\rangle
                                        = \langle q(-q^{2}r^{\ast}_{\succ}(y))(q^{2}l^{\ast}_{\succ}(x))(u^{\ast}, v \rangle   
\eeqs
 giving  $(q^{2}l^{\ast}_{\ast}(x))(-q^{-2}r^{\ast}_{\succ}(y))u^{\ast} = q(-q^{-2}r^{\ast}_{\succ}(y))(q^{2}l^{\ast}_{\succ}(x))u^{\ast}$;

\item[(iii)]
\beqs
\langle (q^{2}l^{\ast}_{\ast})(x)(q^{2}l^{\ast}_{\ast})(y)u^{\ast}, v\rangle                                               = \langle q^{4}l_{\ast}(y)l_{\ast}(x)v, u^{\ast}\rangle\cr
=\langle q^{4}l_{\prec}(y)l_{\ast}(x)v, u^{\ast}\rangle + \langle q^{4}l_{\succ}(y)l_{\succ}(x)v, u^{\ast}\rangle + \langle q^{4}l_{\succ}(y)l_{\prec}(x)v, u^{\ast}\rangle\cr
   = \langle q^{3}l_{\prec}(y\prec x)v, u^{\ast} \rangle + \langle q^{3}l_{\succ}(y\ast x)v, u^{\ast} \rangle + \langle q^{3}l_{\prec}(y\succ x)v, u^{\ast} \rangle\cr
                =\langle q^{3}l_{\ast}(y\ast x)v, u^{\ast}\rangle= \langle q^{3}l^{\ast}_{\ast}(y\ast x)u^{\ast}, v\rangle
\eeqs
 providing  that  $(q^{2}l^{\ast}_{\ast})(x)(q^{2}l^{\ast}_{\ast})(y)u^{\ast}= q(q^{2}l^{\ast}_{\ast}(y\ast x))u^{\ast}$;
 \item[(iv)]
 \beqs
 \langle -q^{-2}r^{\ast}_{\succ}(x\succ y)u^{\ast}, v\rangle = \langle -q^{-2}r_{\succ}(x\succ y)v, u^{\ast} \rangle = \langle -q^{-3}r_{\succ}(y)r_{\ast}(x)v, u^{\ast}\rangle\cr 
= \langle q(q^{-2}r^{\ast}_{\ast})(x)(-q^{-2}r^{\ast}_{\succ})(y)u^{\ast}, v\rangle 
\eeqs
 hence $(-q^{-2}r^{\ast}_{\succ}(x\succ y)u^{\ast}= q(q^{-2}r^{\ast}_{\ast})(x)(-q^{-2}r^{\ast}_{\succ})(y)u^{\ast}$;
 \item[(v)]
\beqs
\langle (q^{2}l^{\ast}_{\ast})(x)(q^{-2}r^{\ast}_{\ast})(y)u^{\ast}, v\rangle                                               = \langle r_{\ast}(y)l_{\ast}(x)v, u^{\ast}\rangle\cr
=\langle r_{\succ}(y)l_{\ast}(x)v, u^{\ast}\rangle + \langle r_{\prec}(y)l_{\succ}(x)v, u^{\ast}\rangle + \langle r_{\prec}(y)l_{\prec}(x)v, u^{\ast}\rangle\cr
   = \langle ql_{\succ}(x)r_{\succ}(y)v, u^{\ast} \rangle + \langle ql_{\succ}(x)r_{\prec}(y)v, u^{\ast}\rangle + \langle ql_{\prec}(x)r_{\ast}(y)v, u^{\ast} \rangle\cr
                =\langle ql_{\ast}(x)r_{\ast}(y)v, u^{\ast}\rangle= \langle q(q^{-2}r^{\ast}_{\ast}(y))(q^{2}l^{\ast}_{\ast}(x))u^{\ast}, v\rangle
\eeqs
 providing  that  $(q^{2}l^{\ast}_{\ast})(x)(q^{-2}r^{\ast}_{\ast})(y)u^{\ast}= q(q^{-2}r^{\ast}_{\ast}(y))(q^{2}l^{\ast}_{\ast}(x))u^{\ast}$;
  \item[(vi)]
 \beqs
 \langle -q^{3}l^{\ast}_{\prec}(y\prec x)u^{\ast}, v\rangle = \langle -q^{3}l_{\prec}(y\prec x)v, u^{\ast}\rangle = \langle -q^{-4}l_{\prec}(y)l_{\ast}(x)v, u^{\ast}\rangle\cr 
= \langle(q^{2}l^{\ast}_{\ast})(x)(-q^{2}l^{\ast}_{\prec})(y)u^{\ast}, v\rangle 
\eeqs
then $(q^{2}l^{\ast}_{\ast})(x)(-q^{2}l^{\ast}_{\prec})(y)u^{\ast}= q(-q^{2}l^{\ast}_{\prec})(y\prec x)u^{\ast};$
\item[(vii)]
\beqs
 \langle q^{-2}r^{\ast}_{\ast}(x\ast y)u^{\ast}, v \rangle
  = \langle q^{-2}r_{\ast}(x\ast y)v, u^{\ast}\rangle\cr
   = \langle q^{-2}r_{\succ}(x\prec y)v + q^{-2}r_{\succ}(x\succ y)v + q^{-2}r_{\prec}(x\ast y)v, u^{\ast}\rangle\cr
\langle q^{-2}q^{-1}r_{\prec}(y)r_{\succ}(x)v + q^{-2}q^{-1}r_{\succ}(y)r_{\ast}(x)v + q^{-2}q^{-1}r_{\prec}(y)r_{\prec}(x)v, u^{\ast}\rangle\cr 
=\langle q^{-3}r_{\ast}(y)r_{\ast}(x)v, u^{\ast}\rangle= \langle q^{-3}r^{\ast}_{\ast}(y)r^{\ast}_{\ast}(x)u^{\ast}, v\rangle 
\eeqs
 leading to $(q^{-2}r^{\ast}_{\ast})(x\ast y)u^{\ast}= q(q^{-2}r^{\ast}_{\ast})(x)(q^{-2}r^{\ast}_{\ast})(y)u^{\ast}$;
 \item[(viii)]
\beqs
 \langle (-q^{2}l^{\ast}_{\prec}(x))(q^{-2}r^{\ast}_{\prec}(y))u^{\ast}, v\rangle = \langle -r_{\prec}(y)l_{\prec}(x)v, u^{\ast} \rangle\cr
= \langle ql_{\prec}(x)r_{\ast}(y)v, u^{\ast}\rangle
= \langle q(q^{2}r^{\ast}_{\ast}(y))(-q^{2}l^{\ast}_{\prec}(x))(u^{\ast}, v \rangle   
\eeqs
 giving  $(-q^{2}l^{\ast}_{\prec}(x))(q^{-2}r^{\ast}_{\prec}(y))u^{\ast} = q(q^{-2}r^{\ast}_{\ast}(y))(-q^{2}l^{\ast}_{\prec}(x))u^{\ast}$;
  \item[(ix)]
\beqs
 \langle (-q^{3}l^{\ast}_{\prec}(y\succ x))u^{\ast}, v\rangle = \langle (-q^{3}l^{\ast}_{\prec}(y\succ x))v, u^{\ast}\rangle\cr
= \langle -q^{4}l_{\succ}(y)l_{\prec}(x)v, u^{\ast}\rangle
= \langle (-q^{2}l^{\ast}_{\prec}(x))(q^{2}l^{\ast}_{\succ}(y))u^{\ast}, v\rangle   
\eeqs
 giving  $(-q^{2}l^{\ast}_{\prec}(x))(q^{2}l^{\ast}_{\succ}(y))u^{\ast} = q(-q^{2}l^{\ast}_{\prec}(y\succ x))u^{\ast}$;
\end{enumerate}
Similar, using the Propostion \ref{Proposition of bimodules} we show the other results.

$ \hfill \square $

\begin{example}
Let $(\mathcal{A}, \prec, \succ)$ be a q-generalized dendriform algebra. Then,  
\beqs
 (q^{-2}(R^{\ast}_{\succ} + R^{\ast}_{\prec}), -q^{-2}L^{\ast}_{\prec}, -q^{-2}R^{\ast}_{\succ},  q^{2}(L^{\ast}_{\succ} + L^{\ast}_{\prec}), \mathcal{A}^{\ast}),\cr  (q^{-2}R^{\ast}_{\prec}, 0, 0, q^{2}L^{\ast}_{\succ}, \mathcal{A}^{\ast}), (q^{-2}(R^{\ast}_{\succ} 
+ R^{\ast}_{\prec}), 0, 0, q^{2}(L^{\ast}_{\succ} + L^{\ast}_{\prec}), \mathcal{A}^{\ast})
\eeqs
are bimodules of $(\mathcal{A}, \succ, \prec)$  too. There are two compatible  q-generalized dendriform algebra structures, 
$
\mathcal{A} \ltimes_{R^{\ast}_{\succ} + R^{\ast}_{\prec}, -L^{\ast}_{\prec}, -R^{\ast}_{\succ},  
L^{\ast}_{\succ} + L^{\ast}_{\prec}} \mathcal{A}^{\ast} \mbox { and } \mathcal{A} \ltimes_{R^{\ast}_{\succ} + R^{\ast}_{\prec}, 0, 0,  L^{\ast}_{\succ} + L^{\ast}_{\prec}} \mathcal{A}^{\ast},
$
on the same associative algebra $  \mathcal{A} \ltimes_{ R^{\ast}_{\prec},  L^{\ast}_{\succ}}
 \mathcal{A}^{\ast} $.
\end{example}.

\section{Double construction of sympletic  antiassociative algebras}
\subsection{$ \mathcal{O} $-operators and antidendriform algebras } 
\begin{definition}
Let $(\mathcal{A}, \cdot)$ be an antiassociative algebra and $(l, r, V)$ a bimodule. A linear map $ T : V \rightarrow \mathcal{A} $ 
is called an \textbf{$ \mathcal{O} $-operator associated} to $(l, r, V)$,  if $ T $ satisfies
\beqs
T(u)\cdot T(v) = T(l(T(u))v + r(T(v))u) \mbox { for all } u, v \in V.
\eeqs
\end{definition}
\begin{example}
Let $(\mathcal{A}, \cdot)$ be an antiassociative algebra. Then,
 the identity map $ \id $ is an $ \mathcal{O} $-operator associated to the bimodule $(L,0)$ or $(0,R)$.
\end{example}

\begin{example}
Let $(\mathcal{A}, \cdot)$ be an antiassociative algebra. 
A linear map $ \tau : \mathcal{A} \rightarrow \mathcal{A} $ is called a \textbf{Rota-Baxter operator} on $ \mathcal{A} $ of weight zero if $ \tau $ 
satisfies
\beqs
\tau(x)\cdot \tau(y) = \tau(\tau(x)\cdot y + x\cdot \tau(y)) \mbox { for all } x, y \in \mathcal{A}.
\eeqs
In fact, a Rota-Baxter operator on $ \mathcal{A} $ is just an $ \mathcal{O} $-operator associated to the bimodule $(L, R)$.
\end{example}
\begin{theorem}\label{theo3.1.2}
 Let $ \mathcal{A} $ be an antiassociative algebra and $ (l, r, V) $ a bimodule. 
Let $ T : V \rightarrow \mathcal{A} $ be an $ \mathcal{O} $-operator associated to $ (l, r, V) $. Then, there exists an antidendriform 
algebra structure on $ V $ given by
\beqs
 u \succ v = l(T(u))v ,  u \prec v = r(T(v))u 
\eeqs
for all $ u, v \in V $. So, there is an associated antiassociative algebra structure on $ V $ given by the equation (\ref{antiassociative product})  and $ T $ 
is a homomorphism of antiassociative algebras. Moreover, $ T(V) = \lbrace { T(v) \setminus v \in V }  \rbrace  \subset \mathcal{A} $ is an antiassociative 
subalgebra of $ \mathcal{A} $ and there is an induced antidendriform algebra structure on $ T(V) $ given by
\beq \label{eq33}
T(u) \succ T(v) = T(u \succ v), T(u) \prec T(v) = T(u \prec v)
\eeq
for all $ u, v \in V $. Its corresponding associated antiassociative algebra structure on $ T(V) $ given by the equation (\ref{antiassociative product}) is 
just the antiassociative subalgebra structure of $ \mathcal{A} $ and $ T $ is a homomorphism of antidendriform algebras.
\end{theorem}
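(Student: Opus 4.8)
The plan is to verify directly that the operations $u\succ v=l(T(u))v$ and $u\prec v=r(T(v))u$ satisfy the three defining identities of an antidendriform algebra (the $q=-1$ specialization of the $q$-generalized dendriform axioms of Section~4), and then to transport the whole structure along $T$. The only ingredients needed are: the $\mathcal{O}$-operator identity $T(u)\cdot T(v)=T\big(l(T(u))v+r(T(v))u\big)=T(u\succ v+u\prec v)$, which already gives $T(u\ast v)=T(u)\cdot T(v)$ once $\ast$ is defined by (\ref{antiassociative product}); the $q=-1$ bimodule relations $l(xy)v=-l(x)l(y)v$, $r(xy)v=-r(y)r(x)v$, $l(x)r(y)v=-r(y)l(x)v$; and antiassociativity in $\mathcal{A}$.

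With these in hand each antidendriform identity is a one-line check. For the first, $(u\prec v)\prec w=r(T(w))r(T(v))u=-r\big(T(v)\cdot T(w)\big)u=-r\big(T(v\ast w)\big)u=-u\prec(v\ast w)$, using $r(xy)=-r(y)r(x)$ and then the $\mathcal{O}$-operator identity. For the second, $(u\succ v)\prec w=r(T(w))l(T(u))v=-l(T(u))r(T(w))v=-u\succ(v\prec w)$, from $l(x)r(y)=-r(y)l(x)$. For the third, $u\succ(v\succ w)=l(T(u))l(T(v))w=-l\big(T(u)\cdot T(v)\big)w=-l\big(T(u\ast v)\big)w=-(u\ast v)\succ w$, from $l(xy)=-l(x)l(y)$. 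Hence $(V,\succ,\prec)$ is antidendriform, and by Proposition~\ref{pro2} (taken at $q=-1$) the associated product $u\ast v=l(T(u))v+r(T(v))u$ makes $(V,\ast)$ an antiassociative algebra; the $\mathcal{O}$-operator identity is exactly the assertion that $T\colon(V,\ast)\to(\mathcal{A},\cdot)$ is a homomorphism, so $T(V)=\{T(v): v\in V\}$ is closed under $\cdot$ and is an antiassociative subalgebra of $\mathcal{A}$.

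For the induced structure on $T(V)$ the one genuinely substantive point is well-definedness of (\ref{eq33}): since $u\succ v$ depends on $v$ and not merely on $T(v)$, I must check that $T(u\succ v)$ and $T(u\prec v)$ depend only on $T(u)$ and $T(v)$, i.e.\ that $l(a)w\in\ker T$ and $r(a)w\in\ker T$ whenever $a\in T(V)$ and $w\in\ker T$. This follows from the $\mathcal{O}$-operator identity itself: if $w\in\ker T$ and $a=T(u)$, then $0=T(u)\cdot T(w)=T\big(l(T(u))w+r(T(w))w\big)=T\big(l(T(u))w\big)$ because $r(T(w))=r(0)=0$, and symmetrically $T\big(r(T(v))w\big)=0$. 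Granting this, $T\colon(V,\succ,\prec)\to(T(V),\succ,\prec)$ is a surjective homomorphism of antidendriform algebras, so the three identities on $T(V)$ are obtained by applying $T$ to those already established on $V$, and the associated product on $T(V)$ is $T(u)\succ T(v)+T(u)\prec T(v)=T(u\ast v)=T(u)\cdot T(v)$, which is precisely the antiassociative subalgebra structure inherited from $\mathcal{A}$.

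I expect the main obstacle to be organizational rather than conceptual: keeping the $q=-1$ signs consistent across the bimodule relations and the antidendriform axioms, and ensuring the well-definedness argument for (\ref{eq33}) is in place before (\ref{eq33}) is used to define the operations on $T(V)$. No individual step should require a lengthy computation.
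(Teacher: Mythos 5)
Your proposal is correct and follows essentially the same route as the paper: a direct verification of the three antidendriform identities on $V$ from the $q=-1$ bimodule relations together with the $\mathcal{O}$-operator identity $T(u\ast v)=T(u)\cdot T(v)$, followed by transport of the structure along $T$. You are in fact more thorough than the paper, which writes out only one of the three identities (declaring the others ``checked similarly'') and does not address the well-definedness of the induced operations on $T(V)$ that you settle with the kernel argument.
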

\textbf{Proof: }
For any $x, y, z\in V,$ we have
\beqs
&&(x\succ y)\prec z + x\succ(y\prec z)=l(T(x))y\prec z + x\succ r(T(z)y)\cr
&&=r(T(z))l(T(x))y+l(T(x))r(T(z))y=-l((T(x)))r(T(z))y + l((T(x)))r(T(z))y\cr
&&=0.
\eeqs
The other equalities are checked similarly.

$ \hfill \square $
\begin{corollary}\label{corollary 3.1.3}
 Let $ (\mathcal{A}, \ast) $ be an antiassociative algebra. There is a compatible antidendriform algebra structure on $ \mathcal{A} $ 
if and only if there exists an invertible $ \mathcal{O} $-operator of $ (\mathcal{A}, \ast) $.
\end{corollary}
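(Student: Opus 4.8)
The plan is to prove both implications by transporting structures along the operator, using Theorem~\ref{theo3.1.2} for the ``if'' part and the $q=-1$ case of Proposition~\ref{pro2} for the ``only if'' part; no genuinely hard step is expected, so the work is mainly in identifying which bimodule plays which role.

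For the ``if'' direction, suppose $T:V\to\mathcal{A}$ is an invertible $\mathcal{O}$-operator of $(\mathcal{A},\ast)$, i.e. associated to a bimodule $(l,r,V)$ of $(\mathcal{A},\ast)$; since $T$ is a bijection we have $T(V)=\mathcal{A}$. By Theorem~\ref{theo3.1.2} there is an antidendriform structure on $V$ with $u\succ v=l(T(u))v$ and $u\prec v=r(T(v))u$, and an induced antidendriform structure on $T(V)=\mathcal{A}$ characterised by $T(u)\succ T(v)=T(u\succ v)$ and $T(u)\prec T(v)=T(u\prec v)$; equivalently, for $x,y\in\mathcal{A}$ this reads $x\succ y=T\bigl(l(x)T^{-1}(y)\bigr)$ and $x\prec y=T\bigl(r(y)T^{-1}(x)\bigr)$, which is well defined because $T$ is invertible. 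Theorem~\ref{theo3.1.2} further tells us that the associated antiassociative product of this structure is exactly the antiassociative subalgebra structure carried by $T(V)=\mathcal{A}$, namely $\ast$ itself; the same thing is seen directly from the $\mathcal{O}$-operator identity, since $x\succ y+x\prec y=T\bigl(l(x)T^{-1}(y)+r(y)T^{-1}(x)\bigr)=T(T^{-1}x)\ast T(T^{-1}y)=x\ast y$. Hence $(\mathcal{A},\succ,\prec)$ is a compatible antidendriform structure on $(\mathcal{A},\ast)$.

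For the ``only if'' direction, let $(\mathcal{A},\succ,\prec)$ be a compatible antidendriform structure, so $x\ast y=x\succ y+x\prec y$ for all $x,y\in\mathcal{A}$. By Proposition~\ref{pro2} with $q=-1$, the pair $(L_{\succ},R_{\prec})$ is a bimodule of the antiassociative algebra $(\mathcal{A},\ast)$. Then $\id_{\mathcal{A}}$ is an $\mathcal{O}$-operator associated to $(L_{\succ},R_{\prec},\mathcal{A})$, because
\begin{align*}
\id(x)\ast\id(y)&=x\ast y=x\succ y+x\prec y\\
&=L_{\succ}(x)y+R_{\prec}(y)x=\id\bigl(L_{\succ}(\id(x))y+R_{\prec}(\id(y))x\bigr),
\end{align*}
and $\id_{\mathcal{A}}$ is obviously invertible. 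This yields an invertible $\mathcal{O}$-operator of $(\mathcal{A},\ast)$ and finishes the proof.

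The argument is bookkeeping layered on Theorem~\ref{theo3.1.2}; the one point deserving attention is that the structure obtained by transporting along an invertible $T$ really has $(\mathcal{A},\ast)$ as its associated antiassociative algebra rather than some other product, and this is precisely where the $\mathcal{O}$-operator identity enters. The antidendriform axioms themselves need no new verification, as they are supplied by Theorem~\ref{theo3.1.2} (whose proof displays the computation for one axiom, the others being analogous).
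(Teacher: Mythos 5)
Your proof is correct and takes essentially the same route as the paper: the forward direction transports the antidendriform structure from Theorem~\ref{theo3.1.2} along the invertible $T$ via $x\succ y=T(l(x)T^{-1}(y))$, $x\prec y=T(r(y)T^{-1}(x))$, and the converse exhibits $\id_{\mathcal{A}}$ as an invertible $\mathcal{O}$-operator for the bimodule $(L_{\succ},R_{\prec})$. Your explicit check that the associated product of the transported structure is $\ast$ itself is a useful detail the paper leaves implicit.
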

\textbf{Proof: }   
In fact, if $ T $ is an invertible $ \mathcal{O}- $operator associated to a bimodule $(l, r, V)$, then 
the compatible antidendriform algebra structure on $ \mathcal{A} $ is given by 
\beqs
x \succ y = T(l(x)T^{-1}(y)), x \prec y = T(r(y)T^{-1}(x)) \mbox { for all } x, y \in \mathcal{A}.
\eeqs
Conversely, let $(\mathcal{A}, \succ, \prec)$ be a antidendriform algebra and $(\mathcal{A}, \ast)$
 the associated antiassociative algebra. Then, the identity map $ \id $ is an $ \mathcal{O}- $operator 
associated to the bimodule $(L_{\succ}, R_{\prec})$ of $(\mathcal{A}, \ast)$. 

$ \cqfd$
\subsection{Double constructions of sympletic  antiassociative algebras }
\begin{definition}
Let $\A$ be an antiassociative algebra. We say that $(\A, \omega)$ is a sympletic antiassociative algebra if $\omega$ is a non-degenerate skew-symmetric bilinear form on $\A$ such that the following identity satisfied:
\beq
\omega(xy, z) + \omega(yz, x) + \omega(zx, y)=0\ \mbox{(invariance condition),}
\eeq 
for all $x, y, z\in\A.$
\end{definition}
\begin{theorem}\label{theo4.1.1}
Let $(\mathcal{A}, \ast)$ be an antiassociative algebra and let $ \omega $ be a non-degenerate skew-symmetric bilinear form. 
Then,  there exists a compatible antidendriform algebra structure $ \succ, \prec $ on $  \mathcal{A} $ given by 
\beq \label{eq53}
\omega(x \succ y, z) = \omega(y, z \ast x), \ \ \ \omega(x \succ y, z) = \omega(x, y \ast z) \mbox { for all } x, y \in \mathcal{A}.
\eeq 
\end{theorem}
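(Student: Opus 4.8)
The plan is to use non-degeneracy of $\omega$ to \emph{define} $\succ$ and $\prec$ by the displayed relations and then to verify compatibility with $\ast$ together with the three antidendriform axioms by transposing everything through $\omega$. For fixed $v\in\mathcal{A}$ the maps $z\mapsto\omega(v,z)$ and $z\mapsto\omega(z,v)$ are linear isomorphisms $\mathcal{A}\to\mathcal{A}^{\ast}$, so for each $x,y\in\mathcal{A}$ there is a unique $x\succ y\in\mathcal{A}$ with $\omega(x\succ y,z)=\omega(y,z\ast x)$ for all $z$, and a unique $x\prec y\in\mathcal{A}$ with $\omega(x\prec y,z)=\omega(x,y\ast z)$ for all $z$; these are the defining relations (\ref{eq53}), and bilinearity of $\succ,\prec$ is immediate. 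It then remains to prove (a) that $x\succ y+x\prec y=x\ast y$, so that $(\mathcal{A},\ast)$ is the associated antiassociative algebra of $(\mathcal{A},\succ,\prec)$ in the sense following Proposition \ref{pro2}, and (b) that $(\mathcal{A},\succ,\prec)$ satisfies $(x\prec y)\prec z=-x\prec(y\ast z)$, $(x\succ y)\prec z=-x\succ(y\prec z)$, and $x\succ(y\succ z)=-(x\ast y)\succ z$, i.e. the $q=-1$ instance of the dendriform axioms. Throughout I use that $(\mathcal{A},\omega)$ is a symplectic antiassociative algebra, so that $\omega$ satisfies the invariance identity $\omega(x\ast y,z)+\omega(y\ast z,x)+\omega(z\ast x,y)=0$ in addition to being non-degenerate and skew-symmetric.

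For (a), pair $x\succ y+x\prec y$ against an arbitrary $z$. By the defining relations and skew-symmetry,
\[
\omega(x\succ y+x\prec y,\,z)=\omega(y,z\ast x)+\omega(x,y\ast z)=-\omega(z\ast x,y)-\omega(y\ast z,x),
\]
and the invariance identity turns the right-hand side into $\omega(x\ast y,z)$. Since $z$ is arbitrary and $\omega$ is non-degenerate, $x\succ y+x\prec y=x\ast y$.

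For (b), each identity is obtained by pairing both sides against an arbitrary $w\in\mathcal{A}$, applying the defining relations of (\ref{eq53}) twice, and then invoking antiassociativity of $\ast$ in the form $(u\ast v)\ast w=-u\ast(v\ast w)$. For instance, $\omega((x\succ y)\prec z,w)=\omega(x\succ y,z\ast w)=\omega(y,(z\ast w)\ast x)=-\omega(y,z\ast(w\ast x))$, while $\omega(-x\succ(y\prec z),w)=-\omega(y\prec z,w\ast x)=-\omega(y,z\ast(w\ast x))$, and the two coincide; the other two axioms are verified by the same two-step substitution. Note that these three identities use only antiassociativity of $\ast$, not the invariance of $\omega$; the latter enters solely in step (a). Non-degeneracy of $\omega$ then lifts each pairing identity to an equality in $\mathcal{A}$, which completes the argument.

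I expect the genuine obstacle to be bookkeeping rather than ideas: one must track which slot of $\omega$ each element occupies and apply skew-symmetry consistently, since a single sign error would break the compatibility identity (a), while the antidendriform axioms themselves follow by a mechanical substitution once the defining relations are in place. It is also worth recording the conceptual content: non-degeneracy of $\omega$ is equivalent to the existence of an invertible linear map $T\colon\mathcal{A}^{\ast}\to\mathcal{A}$, and $T$ turns out to be an invertible $\mathcal{O}$-operator of $(\mathcal{A},\ast)$ associated to a suitable bimodule on $\mathcal{A}^{\ast}$ precisely when $\omega$ is invariant; feeding this $T$ into Corollary \ref{corollary 3.1.3} (equivalently Theorem \ref{theo3.1.2}) recovers the very same $\succ,\prec$, so the theorem may alternatively be read as the symplectic incarnation of the $\mathcal{O}$-operator construction.
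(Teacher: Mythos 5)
Your proposal is correct, and it takes a more direct route than the paper. The paper's proof defines $T\colon\mathcal{A}\to\mathcal{A}^{\ast}$ by $\langle T(x),y\rangle=\omega(x,y)$, asserts without verification that $T^{-1}$ is an invertible $\mathcal{O}$-operator for the bimodule $(R^{\ast}_{\ast},L^{\ast}_{\ast},\mathcal{A}^{\ast})$, and then invokes Corollary \ref{corollary 3.1.3} (hence Theorem \ref{theo3.1.2}) to produce $\succ,\prec$. You instead define $\succ,\prec$ directly by non-degeneracy and check everything by transposition through $\omega$; your computations are correct, including the signs. The two arguments are equivalent in content: after pairing against $z$, the paper's unproved claim that $T^{-1}$ is an $\mathcal{O}$-operator is exactly your step (a), i.e.\ the identity $\omega(x\ast y,z)=\omega(y,z\ast x)+\omega(x,y\ast z)$, which by skew-symmetry is the cyclic invariance condition; and the antidendriform axioms verified (partially) in Theorem \ref{theo3.1.2} are your step (b). So your write-up in effect unpacks and completes the paper's proof, at the cost of losing the conceptual packaging you restore in your closing remark. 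Two observations of yours deserve emphasis: first, the second relation in (\ref{eq53}) must define $\prec$, not $\succ$ (a typo in the statement that you silently correct); second, mere non-degeneracy and skew-symmetry of $\omega$ do not suffice for compatibility, since step (a) genuinely requires the invariance identity $\omega(x\ast y,z)+\omega(y\ast z,x)+\omega(z\ast x,y)=0$ --- the theorem's hypothesis should read ``symplectic'' rather than just ``non-degenerate skew-symmetric'', and the paper's own proof uses this silently in claiming $T^{-1}$ is an $\mathcal{O}$-operator. By contrast, as you note, the three $q=-1$ dendriform axioms need only antiassociativity of $\ast$.
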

\textbf{Proof: }
Define a linear map $ T : \mathcal{A} \rightarrow \mathcal{A}^{\ast} $ by $ \langle T(x), y \rangle = \omega(x, y) $ for all $ x, y \in \mathcal{A} $. 
Then, $ T $ is invertible and $ T^{-1} $ is an $  \mathcal{O}-$operator of the antiassociative algebra $(\mathcal{A}, \ast)$ associated to the 
bimodule $(R^{\ast}_{\ast}, L^{\ast}_{\ast})$. By Corollary \ref{corollary 3.1.3}, there is a compatible antidendriform algebra structure 
$ \succ, \prec $ on $(\mathcal{A}, \ast)$ given by
\beqs
x \succ y = T^{-1}R^{\ast}_{\ast}(x)T(y), \ \ x \prec y =  T^{-1}L^{\ast}_{\ast}(y)T(x)
\eeqs
for all $ x, y \in \mathcal{A} $, which gives exactly the equation (\ref{eq53}).
$ \hfill \square $

\begin{definition}
We call $(\mathcal{A},  \omega)$ a \textbf{double construction of sympletic antiassociative algebra} associated to $\mathcal{A}_1$ and ${\mathcal A}_1^*$ if it satisfies the conditions 
\benum
 \item[(1)] $ \mathcal{A} = \mathcal{A}_{1}
\oplus \mathcal{A}^{\ast}_{1} $ as the direct sum of vector
spaces; 
\item[(2)] $\mathcal{A}_1$ and ${\mathcal A}_1^*$ are antiassociative subalgebras of $
(\mathcal{A}$;
\item[(3)] $\omega$ is the natural non-degenerate antisymmetric invariant bilinear form on $\mathcal{A}_{1} \oplus \mathcal{A}^{\ast}_{1}$
given by \beq \label{sympletic form}
 \omega(x + a^{\ast}, y +  b^{\ast})&=& -\langle x, b^{\ast} \rangle + \langle a^{\ast}, y \rangle, 
\eeq
for all  $x, y \in \mathcal{A}_{1}, a^{\ast}, b^{\ast} \in \mathcal{A}^{\ast}_{1}$
where $ \langle  , \rangle $ is the natural pair between the vector space $ \mathcal{A}_{1} $ and its dual space  $ \mathcal{A}^{\ast}_{1} $.
\eenum
\end{definition} 

Let $(\mathcal{A}, \ast_{\mathcal{A}})$ be an
 antiassociative algebra and suppose that there is an antiassociative algebra structure $ \ast_{\mathcal{A}^{\ast}} $ on its dual 
space $ \mathcal{A}^{\ast} $. We construct an antiassociative algebra structure on the direct sum
 $ \mathcal{A} \oplus \mathcal{A}^{\ast} $ of the underlying vector spaces of $ \mathcal{A} $ and $ \mathcal{A}^{\ast} $ such
 that both $ \mathcal{A} $ and $ \mathcal{A}^{\ast} $ are subalgebras and 
 the antisymmetric bilinear form on $ \mathcal{A} \oplus \mathcal{A}^{\ast} $ given by Eq.(\ref{sympletic form}) is invariant on $ \mathcal{A} \oplus \mathcal{A}^{\ast} $. Such a construction is called double construction of sympletic antiassociative algebra associated to $ (\mathcal{A}, \ast_{\mathcal{A}}) $ and $ (\mathcal{A}^{\ast}, \ast_{\mathcal{A}^{\ast}}) $ and denoted by $ (T(\mathcal{A}) = \mathcal{A} \bowtie \mathcal{A}^{\ast}, \omega)  $.
\begin{corollary}
Let $(T(\mathcal{A}) = \mathcal{A} \bowtie \mathcal{A}^{\ast}, \omega)$ be a double construction of sympletic antiassociative algebra. Then, 
 there exists a compatible antidendriform algebra structure $ \succ, \prec $ on $ T(\mathcal{A}) $ 
defined by the equation (\ref{eq53}).

  Moreover, $ \mathcal{A} $ and $ \mathcal{A}^{\ast} $, endowed with this product,
 are antidendriform subalgebras. 
\end{corollary}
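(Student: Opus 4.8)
The plan is to obtain the first assertion immediately from Theorem~\ref{theo4.1.1} and then to read off the ``moreover'' part from the observation that both $\mathcal{A}$ and $\mathcal{A}^{\ast}$ are totally isotropic and maximal with respect to $\omega$.

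First I would note that, by the very definition of a double construction of a sympletic antiassociative algebra, $(T(\mathcal{A}),\ast)$ is an antiassociative algebra and $\omega$ is a non-degenerate skew-symmetric bilinear form on it satisfying the invariance identity $\omega(xy,z)+\omega(yz,x)+\omega(zx,y)=0$. Hence Theorem~\ref{theo4.1.1} applies verbatim to the pair $(T(\mathcal{A}),\omega)$ and yields a compatible antidendriform structure $\succ,\prec$ on $T(\mathcal{A})$, characterized by the relations of Eq.~(\ref{eq53}), i.e. $\omega(x\succ y,z)=\omega(y,z\ast x)$ and $\omega(x\prec y,z)=\omega(x,y\ast z)$ for all $x,y,z$. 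This settles the first claim.

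For the second claim, the key preliminary remark is that the explicit pairing (\ref{sympletic form}) forces $\omega(x,y)=0$ for all $x,y\in\mathcal{A}$ and $\omega(a^{\ast},b^{\ast})=0$ for all $a^{\ast},b^{\ast}\in\mathcal{A}^{\ast}$; moreover a vector $v\in T(\mathcal{A})$ with $\omega(v,x)=0$ for every $x\in\mathcal{A}$ must lie in $\mathcal{A}$, and dually one with $\omega(v,a^{\ast})=0$ for every $a^{\ast}\in\mathcal{A}^{\ast}$ must lie in $\mathcal{A}^{\ast}$ (each summand equals its own $\omega$-orthogonal complement). Granting this, I would take $x,y\in\mathcal{A}$ and an arbitrary $z\in\mathcal{A}$; by Eq.~(\ref{eq53}), $\omega(x\succ y,z)=\omega(y,z\ast x)$ and $\omega(x\prec y,z)=\omega(x,y\ast z)$, and since $\mathcal{A}$ is an antiassociative subalgebra of $T(\mathcal{A})$ both $z\ast x$ and $y\ast z$ lie in $\mathcal{A}$, so the right-hand sides vanish by isotropy. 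Thus $\omega(x\succ y,z)=\omega(x\prec y,z)=0$ for all $z\in\mathcal{A}$, whence $x\succ y,\,x\prec y\in\mathcal{A}$. Running the identical computation with the roles of $\mathcal{A}$ and $\mathcal{A}^{\ast}$ interchanged --- using that $\mathcal{A}^{\ast}$ is an antiassociative subalgebra and that $\mathcal{A}^{\ast}$ is isotropic --- gives $a^{\ast}\succ b^{\ast},\,a^{\ast}\prec b^{\ast}\in\mathcal{A}^{\ast}$ for all $a^{\ast},b^{\ast}\in\mathcal{A}^{\ast}$. Hence $\mathcal{A}$ and $\mathcal{A}^{\ast}$ are antidendriform subalgebras.

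There is no genuine obstacle in this argument: the only step deserving care is the bookkeeping with (\ref{sympletic form}) --- checking that the restriction of $\omega$ to either summand vanishes identically and that $\mathcal{A}^{\perp_{\omega}}=\mathcal{A}$, $(\mathcal{A}^{\ast})^{\perp_{\omega}}=\mathcal{A}^{\ast}$ --- which is exactly what powers the non-degeneracy step; once that is recorded, the rest is the two-line computation above from Eq.~(\ref{eq53}) together with the subalgebra property of $\mathcal{A}$ and $\mathcal{A}^{\ast}$.
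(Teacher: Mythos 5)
Your proposal is correct and follows essentially the same route as the paper: the first claim is quoted from Theorem \ref{theo4.1.1}, and the ``moreover'' part is obtained by computing $\omega(x\succ y,z)=\omega(y,z\ast x)=0$ and $\omega(x\prec y,z)=\omega(x,y\ast z)=0$ for $x,y,z\in\mathcal{A}$ via Eq.~(\ref{eq53}), the subalgebra property, and the isotropy of the two summands, then invoking non-degeneracy to kill the $\mathcal{A}^{\ast}$-component. The paper phrases this last step by writing $x\succ y=a+b^{\ast}$ and showing $b^{\ast}=0$, while you phrase it as $\mathcal{A}^{\perp_{\omega}}=\mathcal{A}$; these are the same argument.
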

\textbf{Proof: }
The first  half follows from Theorem \ref{theo4.1.1}. 
Let $ x, y \in \mathcal{A} $. Set $ x \succ y = a + b^{\ast} $, where 
$ a \in \mathcal{A}, b^{\ast} \in \mathcal{A}^{\ast} $. Since $ \mathcal{A} $ is an antiassociative subalgebra of $T(\mathcal{A})$ 
and $ \omega(\mathcal{A}, \mathcal{A}) = \omega(\mathcal{A}^{\ast}, \mathcal{A}^{\ast}) = 0 $, we have
\beqs
\omega(b^{\ast}, \mathcal{A}^{\ast}) = \omega(b^{\ast}, \mathcal{A}) = \omega(x \succ y, \mathcal{A}) = \omega(y, \mathcal{A} \ast x) = 0.
\eeqs
Therefore, 
 $ b^{\ast} = 0 $ due to the non-dependence of $ \omega $. Hence, $ x \succ y = a \in \mathcal{A}$. Similarly, $ x \prec y  \in \mathcal{A} $.
 Thus, $ \mathcal{A} $ is an antidendriform subalgebra of $ T(\mathcal{A}) $ with the product $ \prec, \succ $.
 By symmetry of $ \mathcal{A} $, $ \mathcal{A}^{\ast} $ is also an antidendriform subalgebra.

$ \hfill \square $
\begin{definition}
Let $(T(\mathcal{A}_{1}) = \mathcal{A}_{1} \bowtie \mathcal{A}^{\ast}_{1}, \omega_{1})$ and $(T(\mathcal{A}_{2}) = 
\mathcal{A}_{2} \bowtie \mathcal{A}^{\ast}_{2}, \omega_{2})$ be two double constructions of sympletic antiassociative algebra. 
They are \textbf{isomorphic} if there exists an isomorphism of antiassociative algebras $ \varphi : T(\mathcal{A}_{1}) \rightarrow T(\mathcal{A}_{2}) $ 
satisfying the conditions
\beq \label{eq54}
\varphi (\mathcal{A}_{1}) = \mathcal{A}_{2}, \ \ \varphi (\mathcal{A}^{\ast}_{1}) = \mathcal{A}^{\ast}_{2}, \ \ \omega_{1}(x, y)
 = \varphi^{\ast}  \omega_{2}(x, y) = \omega_{2}(\varphi(x), \varphi(y))  
\eeq
for all $ x, y \in \mathcal{A}_{1} $.
\end{definition}
\begin{proposition}
Two double constructions of sympletic antiassociative algebra  $(T(\mathcal{A}_{1}) = \mathcal{A}_{1} \bowtie 
\mathcal{A}^{\ast}_{1}, \omega_{1})$ and $(T(\mathcal{A}_{2}) = \mathcal{A}_{2} \bowtie
 \mathcal{A}^{\ast}_{2}, \omega_{2})$ are isomorphic if and  only if there exists a antidendriform algebra isomorphism $ \varphi : T(\mathcal{A}_{1}) \rightarrow T(\mathcal{A}_{1}) $ satisfying the equation (\ref{eq54}), where the antidendriform 
algebra structures on $ T(\mathcal{A}_{1}) $ and $  T(\mathcal{A}_{2}) $  and given by the equation (\ref{eq53}), respectively.  
\end{proposition}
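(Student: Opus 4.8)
The plan is to use the fact that, by Theorem \ref{theo4.1.1} (together with Corollary \ref{corollary 3.1.3}), the compatible antidendriform structure $(\succ,\prec)$ on a double construction $(T(\mathcal{A}),\omega)$ is not merely \emph{some} structure but is \emph{determined} by the pair $(\ast,\omega)$ through the identities \eqref{eq53}: since $\omega$ is non-degenerate, $x\succ y$ and $x\prec y$ are characterized as the unique elements whose $\omega$-pairings against an arbitrary $z$ have the prescribed values. Hence "isomorphic as double constructions of symplectic antiassociative algebras" and "isomorphic as antidendriform algebras compatibly with \eqref{eq54}" should be two phrasings of the same relation, and the proof reduces to transporting each structure across the given map $\varphi$.

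For the easy implication I would start from an antidendriform isomorphism $\varphi:T(\mathcal{A}_1)\to T(\mathcal{A}_2)$ satisfying \eqref{eq54}. Since the associated antiassociative product is $x\ast y=x\succ y+x\prec y$ (the compatibility asserted in Theorem \ref{theo4.1.1}, cf. \eqref{antiassociative product}) and $\varphi$ is linear and respects $\succ,\prec$, it automatically intertwines $\ast_1$ and $\ast_2$. Together with $\varphi(\mathcal{A}_1)=\mathcal{A}_2$, $\varphi(\mathcal{A}_1^{\ast})=\mathcal{A}_2^{\ast}$ and $\omega_1(x,y)=\omega_2(\varphi(x),\varphi(y))$ from \eqref{eq54}, this is exactly the data of an isomorphism of the two double constructions of symplectic antiassociative algebras, so this direction is immediate.

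For the converse I would take an isomorphism $\varphi$ of the double constructions, that is, an antiassociative algebra isomorphism with $\varphi(\mathcal{A}_1)=\mathcal{A}_2$, $\varphi(\mathcal{A}_1^{\ast})=\mathcal{A}_2^{\ast}$ and $\omega_1(x,y)=\omega_2(\varphi(x),\varphi(y))$, and show it already respects $\succ$ and $\prec$. Fixing $x,y$ and letting $z$ vary, I would chain the $\omega$-compatibility of $\varphi$, the identity \eqref{eq53} in $T(\mathcal{A}_1)$, and the homomorphism property of $\varphi$:
\beqs
\omega_2\bigl(\varphi(x\succ y),\varphi(z)\bigr)&=&\omega_1(x\succ y,z)=\omega_1\bigl(y,\,z\ast_1 x\bigr)\cr
&=&\omega_2\bigl(\varphi(y),\,\varphi(z)\ast_2\varphi(x)\bigr)=\omega_2\bigl(\varphi(x)\succ\varphi(y),\,\varphi(z)\bigr),
\eeqs
the last equality being \eqref{eq53} read now in $T(\mathcal{A}_2)$. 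Since $\varphi$ is bijective, $\varphi(z)$ runs over all of $T(\mathcal{A}_2)$, so the non-degeneracy of $\omega_2$ forces $\varphi(x\succ y)=\varphi(x)\succ\varphi(y)$; the same computation with the $\prec$-identity of \eqref{eq53} gives $\varphi(x\prec y)=\varphi(x)\prec\varphi(y)$. Thus $\varphi$ is an antidendriform isomorphism, and it satisfies \eqref{eq54} by hypothesis.

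The only delicate point is this final "test against all $z$" step in the converse: it rests entirely on the observation that \eqref{eq53}, read with the non-degeneracy of $\omega$, pins the antidendriform products down uniquely on $T(\mathcal{A})$, while everything else is formal transport of structure. I would also be careful with the bookkeeping in \eqref{eq53} (which of $\succ,\prec$ pairs with which defining identity — as stated there the second line should read $\omega(x\prec y,z)=\omega(x,y\ast z)$) so that the $z$-test argument indeed applies verbatim to $\prec$ as well as to $\succ$.
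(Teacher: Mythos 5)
Your proof is correct and is exactly the "straightforward" argument the paper omits: the forward direction follows because an antidendriform isomorphism intertwines the sums $\ast=\succ+\prec$, and the converse follows by testing $\omega_2\bigl(\varphi(x\succ y)-\varphi(x)\succ\varphi(y),\,\varphi(z)\bigr)=0$ for all $z$ and invoking non-degeneracy. You are also right that the second identity in (\ref{eq53}) must read $\omega(x\prec y,z)=\omega(x,y\ast z)$ (as the construction $x\prec y=T^{-1}L^{\ast}_{\ast}(y)T(x)$ in Theorem \ref{theo4.1.1} confirms); with that correction your argument applies verbatim to $\prec$.
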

\textbf{Proof: }
This is straightforward.
$ \hfill \square $

\begin{theorem}\label{theo4.1.5},
Let $ (\mathcal{A}, \succ_{\mathcal{A}}, \prec_{\mathcal{A}}) $ be an antidendriform algebra and $ (\mathcal{A}, \ast_{\mathcal{A}}) $ 
the associated antiassociative algebra. Suppose that there is an antidendriform algebra  structure 
$ " \succ_{\mathcal{A}^{\ast}}, \prec_{\mathcal{A}^{\ast}} " $ on its dual space $  \mathcal{A}^{\ast} $ and
 $ (\mathcal{A}^{\ast}, \ast_{\mathcal{A}^{\ast}}) $ is the associated antiassociative algebra. Then, there exists a double construction 
of sympletic antiassociative algebra associated to $(\mathcal{A}, \ast_{\mathcal{A}})$ and $(\mathcal{A}, \ast_{\mathcal{A}^{\ast}})$
if and only if 
$(\mathcal{A}, \mathcal{A}^{\ast}, R^{\ast}_{\prec_{\mathcal{A}}}, L^{\ast}_{\succ_{\mathcal{A}}}, R^{\ast}_{\prec_{\mathcal{A}^{\ast}}},
L^{\ast}_{\succ_{\mathcal{A}^{\ast}}})$ is a matched pair of antiassociative algebras. Moreover, every double construction of the sympletic antiassociative algebra can be obtained in this way.
\end{theorem}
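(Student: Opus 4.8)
The plan is to prove both directions by reducing to the matched-pair calculus for $q$-generalized (anti)associative algebras together with the antidendriform translation of Section 5; in each direction the real work is just a bookkeeping of components of $\mathcal{A}\oplus\mathcal{A}^{\ast}$ combined with the splitting $\ast=\succ+\prec$. First I would record that the data in the statement is always well posed: by Proposition \ref{Proposition of bimodules}(1) the pair $(L_{\succ_{\mathcal{A}}},R_{\prec_{\mathcal{A}}})$ is a bimodule of $(\mathcal{A},\ast_{\mathcal{A}})$, and dualizing it (for $q=-1$ the dual bimodule of $(l,r,V)$ is $(r^{\ast},l^{\ast},V^{\ast})$) shows that $(R^{\ast}_{\prec_{\mathcal{A}}},L^{\ast}_{\succ_{\mathcal{A}}})$ is a bimodule of $(\mathcal{A},\ast_{\mathcal{A}})$; likewise $(R^{\ast}_{\prec_{\mathcal{A}^{\ast}}},L^{\ast}_{\succ_{\mathcal{A}^{\ast}}})$ is a bimodule of $(\mathcal{A}^{\ast},\ast_{\mathcal{A}^{\ast}})$. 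So the assertion that $(\mathcal{A},\mathcal{A}^{\ast},R^{\ast}_{\prec_{\mathcal{A}}},L^{\ast}_{\succ_{\mathcal{A}}},R^{\ast}_{\prec_{\mathcal{A}^{\ast}}},L^{\ast}_{\succ_{\mathcal{A}^{\ast}}})$ is a matched pair is precisely the assertion that \eqref{matched pair 1}--\eqref{matched pair 6} hold for these four maps.

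For sufficiency, assume the matched pair. By Theorem \ref{matched pair theorem} with $q=-1$ one obtains an antiassociative algebra $\mathcal{A}\bowtie\mathcal{A}^{\ast}$ on $\mathcal{A}\oplus\mathcal{A}^{\ast}$ in which $\mathcal{A}$ and $\mathcal{A}^{\ast}$ are subalgebras, with product \eqref{product of matched pair} specialized to $l_{\mathcal{A}}=R^{\ast}_{\prec_{\mathcal{A}}}$, $r_{\mathcal{A}}=L^{\ast}_{\succ_{\mathcal{A}}}$, $l_{\mathcal{B}}=R^{\ast}_{\prec_{\mathcal{A}^{\ast}}}$, $r_{\mathcal{B}}=L^{\ast}_{\succ_{\mathcal{A}^{\ast}}}$. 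Conditions (1) and (2) of the definition of a double construction of symplectic antiassociative algebra are then automatic; the form $\omega$ of \eqref{sympletic form} is clearly nondegenerate and, since $\langle b^{\ast},x\rangle=\langle x,b^{\ast}\rangle$, skew-symmetric. For the invariance I would expand $\omega\big((x+a^{\ast})\ast(y+b^{\ast}),z+c^{\ast}\big)$ via \eqref{sympletic form}, push every $R^{\ast}$ and $L^{\ast}$ across the pairing by its defining adjunction, add the two cyclic rotations, and collect terms according to which of $x,y,z,a^{\ast},b^{\ast},c^{\ast}$ they are paired against. Each of the six resulting groups contains exactly three terms and telescopes; e.g. the $c^{\ast}$-group is $-\langle x\ast_{\mathcal{A}}y,c^{\ast}\rangle+\langle x\succ_{\mathcal{A}}y,c^{\ast}\rangle+\langle x\prec_{\mathcal{A}}y,c^{\ast}\rangle=0$, three of the remaining groups vanish by $\ast_{\mathcal{A}}=\succ_{\mathcal{A}}+\prec_{\mathcal{A}}$ and the other two by $\ast_{\mathcal{A}^{\ast}}=\succ_{\mathcal{A}^{\ast}}+\prec_{\mathcal{A}^{\ast}}$. (The matched-pair equations enter only through Theorem \ref{matched pair theorem}, guaranteeing that $\mathcal{A}\bowtie\mathcal{A}^{\ast}$ is antiassociative; the symplectic invariance is then automatic.) Hence $(\mathcal{A}\bowtie\mathcal{A}^{\ast},\omega)$ is a double construction of symplectic antiassociative algebra.

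For necessity and the ``moreover'' part, let $(T(\mathcal{A})=\mathcal{A}\bowtie\mathcal{A}^{\ast},\omega)$ be a double construction of symplectic antiassociative algebra. By the Corollary above (on the antidendriform structure induced by $\omega$), $\omega$ induces a compatible antidendriform structure $\succ,\prec$ on $T(\mathcal{A})$ given by \eqref{eq53}, with $\mathcal{A}$ and $\mathcal{A}^{\ast}$ as antidendriform subalgebras; let $\succ_{\mathcal{A}},\prec_{\mathcal{A}}$ and $\succ_{\mathcal{A}^{\ast}},\prec_{\mathcal{A}^{\ast}}$ be the restricted products, whose associated antiassociative algebras are $(\mathcal{A},\ast_{\mathcal{A}})$ and $(\mathcal{A}^{\ast},\ast_{\mathcal{A}^{\ast}})$. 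Then $T(\mathcal{A})$ is a vector-space direct sum of two antidendriform subalgebras, hence a matched pair of antidendriform algebras (the converse part of the matched-pair theorem for $q$-generalized dendriform algebras), and by Corollary \ref{corollary 3.2.7} the associated antiassociative algebras form a matched pair $(\mathcal{A},\mathcal{A}^{\ast},l_{\mathcal{A}},r_{\mathcal{A}},l_{\mathcal{B}},r_{\mathcal{B}})$, where $l_{\mathcal{A}}(x)a^{\ast}$ and $r_{\mathcal{A}}(x)a^{\ast}$ are the $\mathcal{A}^{\ast}$-components of $x\ast a^{\ast}$ and of $a^{\ast}\ast x$. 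The last step is to identify these maps: applying $\omega(u\prec v,w)=\omega(u,v\ast w)$ from \eqref{eq53} with $u,v\in\mathcal{A}$ and $w=c^{\ast}\in\mathcal{A}^{\ast}$, together with the isotropy $\omega(\mathcal{A},\mathcal{A})=\omega(\mathcal{A}^{\ast},\mathcal{A}^{\ast})=0$, yields $\langle u\prec_{\mathcal{A}}v,c^{\ast}\rangle=\langle l_{\mathcal{A}}(v)c^{\ast},u\rangle$, i.e. $l_{\mathcal{A}}=R^{\ast}_{\prec_{\mathcal{A}}}$; symmetrically $\omega(u\succ v,w)=\omega(v,w\ast u)$ gives $r_{\mathcal{A}}=L^{\ast}_{\succ_{\mathcal{A}}}$, and interchanging $\mathcal{A}\leftrightarrow\mathcal{A}^{\ast}$ gives $l_{\mathcal{B}}=R^{\ast}_{\prec_{\mathcal{A}^{\ast}}}$, $r_{\mathcal{B}}=L^{\ast}_{\succ_{\mathcal{A}^{\ast}}}$. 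Hence the matched pair is the stated one, and since the product \eqref{product of matched pair} rebuilt from it coincides with $\ast$ on $T(\mathcal{A})$, every double construction of symplectic antiassociative algebra arises in this way.

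The main obstacle will be twofold: in the sufficiency part, arranging the expansion of the cyclic sum so that the six telescoping groups appear cleanly — long but purely mechanical — and in the necessity part, the point that the bimodule maps must be read off by pairing against $\mathcal{A}^{\ast}$ rather than against $\mathcal{A}$, using that $\mathcal{A}$ and $\mathcal{A}^{\ast}$ are totally isotropic for $\omega$, which is exactly what forces the restricted antidendriform operations $\succ_{\mathcal{A}},\prec_{\mathcal{A}}$ and $\succ_{\mathcal{A}^{\ast}},\prec_{\mathcal{A}^{\ast}}$ — rather than the full products $\ast_{\mathcal{A}},\ast_{\mathcal{A}^{\ast}}$ — to be the ones appearing in $R^{\ast}_{\prec_{\mathcal{A}}}$ and $L^{\ast}_{\succ_{\mathcal{A}}}$.
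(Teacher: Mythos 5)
Your proposal is correct, and in the sufficiency direction it coincides with the paper's argument: build $\mathcal{A}\bowtie^{R^{\ast}_{\prec_{\mathcal{A}}},L^{\ast}_{\succ_{\mathcal{A}}}}_{R^{\ast}_{\prec_{\mathcal{A}^{\ast}}},L^{\ast}_{\succ_{\mathcal{A}^{\ast}}}}\mathcal{A}^{\ast}$ from the matched pair, then verify the cyclic invariance of $\omega$ by pushing each $R^{\ast}$ and $L^{\ast}$ across the pairing and cancelling via $\ast=\succ+\prec$; the paper performs exactly this expansion (without organizing it into your six telescoping groups, but the cancellation pattern is the same, and in both treatments the matched-pair equations are used only to guarantee antiassociativity of the big product, not the invariance itself). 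In the necessity direction your route differs from, and is more complete than, the paper's: the paper simply asserts that antiassociativity of $\ast_{\mathcal{A}\oplus\mathcal{A}^{\ast}}$ yields the matched-pair relations (\ref{eq1.28})--(\ref{eq1.33}), taking for granted that the cross-products $x\ast a^{\ast}$ are given by $R^{\ast}_{\prec_{\mathcal{A}}}(x)a^{\ast}+L^{\ast}_{\succ_{\mathcal{A}^{\ast}}}(a^{\ast})x$ and so on. Your argument supplies precisely the missing identification: you invoke the corollary producing a compatible antidendriform structure from $\omega$ via (\ref{eq53}), use the total isotropy of $\mathcal{A}$ and $\mathcal{A}^{\ast}$ to read off the bimodule maps from $\omega(u\prec v,w)=\omega(u,v\ast w)$, and then pass through Corollary \ref{corollary 3.2.7} to land on the matched pair of antiassociative algebras. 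What the paper's terser version buys is brevity; what yours buys is an actual justification of the "moreover" clause and of why the specific dual representations $R^{\ast}_{\prec}$, $L^{\ast}_{\succ}$ (rather than, say, $R^{\ast}_{\ast}$, $L^{\ast}_{\ast}$) appear. One caveat worth stating explicitly in your write-up: the antidendriform structures on $\mathcal{A}$ and $\mathcal{A}^{\ast}$ are part of the hypotheses, so in the necessity direction you should note (as the theorem implicitly assumes) that the structures induced on the subalgebras by $\omega$ via (\ref{eq53}) are the given ones; both you and the paper pass over this identification silently.
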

\textbf{Proof: }
If  $(\mathcal{A}, \mathcal{A}^{\ast}, R^{\ast}_{\prec_{\mathcal{A}}}, L^{\ast}_{\succ_{\mathcal{A}}}, 
R^{\ast}_{\prec_{\mathcal{A}^{\ast}}},L^{\ast}_{\succ_{\mathcal{A}^{\ast}}})$ is a matched pair of the antiassociative algebras, 
it is straightforward to show that the bilinear form given by Eq.(\ref{sympletic form}) is invariant on the antiassociative algebra 
$ \mathcal{A} \bowtie^{R^{\ast}_{\prec_{\mathcal{A}}}, L^{\ast}_{\succ_{\mathcal{A}}}}_{R^{\ast}_{\prec_{\mathcal{A}^{\ast}}}, 
L^{\ast}_{\succ_{\mathcal{A}^{\ast}}}} \mathcal{A}^{\ast} $ given by :
\beqs
(x + a^{\ast})\ast_{\mathcal{A} \oplus \mathcal{A}^{\ast}} (y + b^{\ast}) &=&(x \ast_{\mathcal{A}} y + R^{\ast}_{\prec_{\mathcal{A}^{\ast}}}(a^{\ast})y 
+ L^{\ast}_{\succ_{\mathcal{A}^{\ast}}}(b^{\ast})x)\cr 
&&+ (a^{\ast} \ast_{\mathcal{A}^{\ast}} b^{\ast} + R^{\ast}_{\prec_{\mathcal{A}}}(x)b^{\ast} + L^{\ast}_{\succ_{\mathcal{A}}}(y)a^{\ast}).
\eeqs
In fact, we have 
\beqs
&&\omega[(x + a^{\ast})\ast_{\mathcal{A} \oplus \mathcal{A}^{\ast}} (y + b^{\ast}), z + c^{\ast}] 
+ \omega[(y + b^{\ast})\ast_{\mathcal{A} \oplus \mathcal{A}^{\ast}} (z + c^{\ast}), x + a^{\ast}] \cr
&&+\omega[(z + c^{\ast})\ast_{\mathcal{A} \oplus \mathcal{A}^{\ast}} (x + a^{\ast}), y + b^{\ast}] \cr
&=& -\langle x \ast_{\mathcal{A}} y + R^{\ast}_{\prec_{\mathcal{A}^{\ast}}}(a^{\ast})y + 
L^{\ast}_{\succ_{\mathcal{A}^{\ast}}}(b^{\ast})x, c^{\ast} \rangle + \langle a^{\ast} 
\ast_{\mathcal{A}^{\ast}} b^{\ast} + R^{\ast}_{\prec_{\mathcal{A}}}(x)b^{\ast}
 + L^{\ast}_{\succ_{\mathcal{A}}}(y)a^{\ast}, z \rangle \cr
&& -\langle y \ast_{\mathcal{A}} z + R^{\ast}_{\prec_{\mathcal{A}^{\ast}}}(b^{\ast})z + 
L^{\ast}_{\succ_{\mathcal{A}^{\ast}}}(c^{\ast})y, a^{\ast} \rangle + \langle b^{\ast} 
\ast_{\mathcal{A}^{\ast}} c^{\ast} + R^{\ast}_{\prec_{\mathcal{A}}}(y)c^{\ast} 
+ L^{\ast}_{\succ_{\mathcal{A}}}(z)b^{\ast}, x \rangle \cr
&&-\langle z \ast_{\mathcal{A}} x + R^{\ast}_{\prec_{\mathcal{A}^{\ast}}}(c^{\ast})x + 
L^{\ast}_{\succ_{\mathcal{A}^{\ast}}}(a^{\ast})z, b^{\ast} \rangle 
+ \langle c^{\ast} \ast_{\mathcal{A}^{\ast}} a^{\ast} + R^{\ast}_{\prec_{\mathcal{A}}}(z)a^{\ast} + 
L^{\ast}_{\prec_{\mathcal{A}}}(x)c^{\ast},  y \rangle \cr
&=& -\langle x \prec_{\mathcal{A}} y, c^{\ast} \rangle  -\langle x \succ_{\mathcal{A}} y,
 c^{\ast} \rangle -\langle c^{\ast} \prec_{\mathcal{A}^{\ast}} a^{\ast}, y \rangle - 
\langle b^{\ast} \succ_{\mathcal{A}^{\ast}} c^{\ast}, x  \rangle + \langle a^{\ast} \prec_{\mathcal{A}^{\ast}} b^{\ast}, 
 z  \rangle \cr
&&+ \langle a^{\ast} \succ_{\mathcal{A}^{\ast}} b^{\ast}, z  \rangle + \langle z \prec_{\mathcal{A}} x, 
b^{\ast} \rangle + \langle y \succ_{\mathcal{A}} z, a^{\ast} \rangle
 - \langle y \succ_{\mathcal{A}} z, a^{\ast} \rangle - \langle y \prec_{\mathcal{A}} z, a^{\ast} \rangle\cr
&&   - \langle a^{\ast} \prec_{\mathcal{A}^{\ast}} b^{\ast}, z \rangle - \langle a^{\ast}
\succ_{\mathcal{A}^{\ast}} c^{\ast}, y \rangle + \langle b^{\ast} 
\prec_{\mathcal{A}^{\ast}} c^{\ast}, x \rangle + \langle b^{\ast} 
\succ_{\mathcal{A}^{\ast}} c^{\ast}, x \rangle \cr
&& + \langle x \prec_{\mathcal{A}} y, c^{\ast} \rangle  
+ \langle z \succ_{\mathcal{A}} x, b^{\ast} \rangle - \langle z \prec_{\mathcal{A}} x, b^{\ast} \rangle
 - \langle z \succ_{\mathcal{A}} x, b^{\ast} \rangle   \cr
&& -\langle b^{\ast} \prec_{\mathcal{A}^{\ast}} c^{\ast},
  x \rangle -  \langle a^{\ast}\succ_{\mathcal{A}^{\ast}} b^{\ast}, z \rangle  +  \langle c^{\ast} \prec_{\mathcal{A}^{\ast}} a^{\ast}, 
 y \rangle  \cr
&& +  \langle a^{\ast} \succ_{\mathcal{A}^{\ast}} c^{\ast}, y \rangle  + \langle y \prec_{\mathcal{A}} z,
 a^{\ast} \rangle  + \langle x \succ_{\mathcal{A}} y, c^{\ast} \rangle \cr
&=& 0.
\eeqs
Conversely, if there exists a double construction of the sympletic antiassociative algebra associated to $(\mathcal{A}, \ast_{\mathcal{A}})$ 
and $(\mathcal{A}, \ast_{\mathcal{A}^{\ast}})$, then  $(\mathcal{A}, \mathcal{A}^{\ast}, R^{\ast}_{\prec_{\mathcal{A}}}, 
L^{\ast}_{\succ_{\mathcal{A}}}, R^{\ast}_{\prec_{\mathcal{A}^{\ast}}},L^{\ast}_{\succ_{\mathcal{A}^{\ast}}})$ is a matched pair of the
 antiassociative algebras  given by the following equations:
\beq \label{eq1.28}
R^{\ast}_{\prec_{\mathcal{A}}}(x)(a^{\ast} \ast_{\mathcal{A}^{\ast}} b^{\ast}) 
= -R^{\ast}_{\prec_{\mathcal{A}}}(L_{\prec_{\mathcal{A}^{\ast}}}(a^{\ast})x)b^{\ast} -
(R^{\ast}_{\prec_{\mathcal{A}}}(x)a^{\ast}) \ast_{\mathcal{A}^{\ast}} b^{\ast},
\eeq
\beq \label{eq1.29}
L^{\ast}_{\succ_{\mathcal{A}}}(x)(a^{\ast} \ast_{\mathcal{A}^{\ast}} b^{\ast}) =  
-L^{\ast}_{\succ_{\mathcal{A}}}(R_{\prec_{\mathcal{A}}}(b^{\ast})x)a^{\ast} - 
a^{\ast} \ast_{\mathcal{A}^{\ast}} (L^{\ast}_{\succ_{\mathcal{A}}}(x)b^{\ast}), 
\eeq
\beq \label{eq1.30}
 R^{\ast}_{\prec_{\mathcal{A}^{\ast}}}(a^{\ast})(x \ast_{\mathcal{A}} y^{\ast}) =  -
R^{\ast}_{\prec_{\mathcal{A}^{\ast}}}(L_{\succ_{\mathcal{A}}}(x)a^{\ast})y -
 (R^{\ast}_{\prec_{\mathcal{A}^{\ast}}}(a^{\ast})x) \ast_{\mathcal{A}} y, 
\eeq
\beq \label{eq1.31}
 L^{\ast}_{\succ_{\mathcal{A}^{\ast}}}(a^{\ast})(x \ast_{\mathcal{A}} y^{\ast}) =  
-L^{\ast}_{\succ_{\mathcal{A}^{\ast}}}(R_{\prec_{\mathcal{A}}}(y)a^{\ast})x -
 x \ast_{\mathcal{A}} (L^{\ast}_{\succ_{\mathcal{A}^{\ast}}}(a^{\ast})y),  
\eeq
\beq \label{eq1.32}
&& R^{\ast}_{\prec_{\mathcal{A}}}(R^{\ast}_{\prec_{\mathcal{A}^{\ast}}}(a^{\ast})x)b^{\ast} + 
(L^{\ast}_{\prec_{\mathcal{A}}}(x)a^{\ast})\ast_{\mathcal{A}^{\ast}}b^{\ast} \cr
&& + L^{\ast}_{\succ_{A}} 
(L^{\ast}_{\succ_{\mathcal{A}^{\ast}}}(b^{\ast})x)a^{\ast} + a^{\ast} \ast_{\mathcal{A}^{\ast}}(R^{\ast}_{\prec_{\mathcal{A}}}(x)b^{\ast}) = 0, 
\eeq
\beq \label{eq1.33}
&& R^{\ast}_{\prec_{\mathcal{A}}}(R^{\ast}_{\prec_{\mathcal{A}^{\ast}}}(x)a^{\ast})y + 
(L^{\ast}_{\succ_{\mathcal{A}^{\ast}}}(a^{\ast})x)\ast_{A}y \cr
&& + L^{\ast}_{\succ_{A^{\ast}}}  (L^{\ast}_{\succ_{A}}(y)a^{\ast})x + x \ast_{A}(R^{\ast}_{\prec_{A^{\ast}}}(a^{\ast})y) = 0
\eeq
since the operation $ \ast_{A \oplus A^{\ast}} $ is antiassociative.
$ \hfill \square $
\begin{corollary}
Let $ (\mathcal{A}, \succ, \prec) $ be an antidendriform algebra and $ (R^{\ast}_{\prec}, L^{\ast}_{\succ}) $ 
the bimodule of the associated antiassociative algebra $ (\mathcal{A}, \ast) $. Then, 
 $ (T(\mathcal{A}) = \mathcal{A} \ltimes _{R^{\ast}_{\prec}, L^{\ast}_{\succ}} \mathcal{A}^{\ast}, \omega )
  $ is a double construction of the sympletic antiassociative algebra. Conversely, let $ (T(\mathcal{A}) = \mathcal{A} \bowtie \mathcal{A}^{\ast}, \omega) $ 
be a double construction of the sympletic antiassociative algebra. If $ \mathcal{A}^{\ast} $ is 
  an ideal of $ T(\mathcal{A}) $, then $ \mathcal{A}^{\ast} $ is a trivial antiassociative algebra and hence $ T(\mathcal{A}) $ is isomorphic 
to the semidirect product $ \mathcal{A} \ltimes_{L_{T(\mathcal{A})},R_{T(\mathcal{A})}} \mathcal{A}^{\ast} $. Furthermore, this double
 construction of the sympletic antiassociative algebra is isomorphic to the double construction of the sympletic antiassociative algebra $ (T(\mathcal{A}) = 
\mathcal{A} \ltimes _{R^{\ast}_{\prec}, L^{\ast}_{\succ}}, \omega ) $ and the antidendriform algebra structure on
   $  \mathcal{A} $ is given by $ \omega $ from the equation (\ref{eq53}).
\end{corollary}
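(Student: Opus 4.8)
\textbf{Proof: } The plan is to handle the two directions separately, in both cases reducing to Theorem~\ref{theo4.1.5} so as not to repeat the long invariance computation carried out there.

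For the direct statement, I would equip $\mathcal{A}^{\ast}$ with the \emph{trivial} antidendriform structure (all products zero); its associated antiassociative algebra $(\mathcal{A}^{\ast},\ast_{\mathcal{A}^{\ast}})$ is then trivial and $R^{\ast}_{\prec_{\mathcal{A}^{\ast}}}=L^{\ast}_{\succ_{\mathcal{A}^{\ast}}}=0$. By Proposition~\ref{pro2} the pair $(L_{\succ},R_{\prec})$ is a bimodule of $(\mathcal{A},\ast)$, hence, applying the dual-bimodule lemma in the case $q=-1$ (where the dual of $(l,r)$ is $(r^{\ast},l^{\ast})$), the pair $(R^{\ast}_{\prec},L^{\ast}_{\succ})$ is again a bimodule of $(\mathcal{A},\ast)$. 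Because every $\mathcal{A}^{\ast}$-side operator and $\ast_{\mathcal{A}^{\ast}}$ vanish, the compatibility equations \eqref{eq1.28}--\eqref{eq1.33} collapse to $0=0$, so $(\mathcal{A},\mathcal{A}^{\ast},R^{\ast}_{\prec},L^{\ast}_{\succ},0,0)$ is a matched pair of antiassociative algebras. Theorem~\ref{theo4.1.5} then produces a double construction of sympletic antiassociative algebra whose underlying algebra is the semidirect product $\mathcal{A}\ltimes_{R^{\ast}_{\prec},L^{\ast}_{\succ}}\mathcal{A}^{\ast}$ and whose form is the canonical $\omega$ of \eqref{sympletic form}; this is the claimed $(T(\mathcal{A}),\omega)$.

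For the converse, let $(T(\mathcal{A})=\mathcal{A}\bowtie\mathcal{A}^{\ast},\omega)$ be a double construction of sympletic antiassociative algebra in which $\mathcal{A}^{\ast}$ is an ideal. From \eqref{sympletic form} one reads off $\omega(\mathcal{A},\mathcal{A})=\omega(\mathcal{A}^{\ast},\mathcal{A}^{\ast})=0$. For $a^{\ast},b^{\ast}\in\mathcal{A}^{\ast}$ the product $a^{\ast}b^{\ast}$ lies in $\mathcal{A}^{\ast}$, so $\omega(a^{\ast}b^{\ast},\mathcal{A}^{\ast})=0$; and for $x\in\mathcal{A}$ the invariance identity $\omega(a^{\ast}b^{\ast},x)+\omega(b^{\ast}x,a^{\ast})+\omega(xa^{\ast},b^{\ast})=0$ together with $b^{\ast}x,\,xa^{\ast}\in\mathcal{A}^{\ast}$ forces $\omega(a^{\ast}b^{\ast},x)=0$. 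Thus $\omega(a^{\ast}b^{\ast},T(\mathcal{A}))=0$ and non-degeneracy of $\omega$ gives $a^{\ast}b^{\ast}=0$: the subalgebra $\mathcal{A}^{\ast}$ is a trivial antiassociative algebra. Moreover, since $x\ast a^{\ast}=l_{\mathcal{A}}(x)a^{\ast}+r_{\mathcal{B}}(a^{\ast})x$ must land in $\mathcal{A}^{\ast}$ for all $x$ while $r_{\mathcal{B}}(a^{\ast})x\in\mathcal{A}$, the $\mathcal{A}^{\ast}$-actions on $\mathcal{A}$ vanish, so $\mathcal{A}^{\ast}$ is a trivial ideal and $T(\mathcal{A})\cong\mathcal{A}\ltimes_{L_{T(\mathcal{A})},R_{T(\mathcal{A})}}\mathcal{A}^{\ast}$.

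To conclude, I would use the ``moreover'' part of Theorem~\ref{theo4.1.5}: $T(\mathcal{A})$ arises from the matched pair $(\mathcal{A},\mathcal{A}^{\ast},R^{\ast}_{\prec_{\mathcal{A}}},L^{\ast}_{\succ_{\mathcal{A}}},R^{\ast}_{\prec_{\mathcal{A}^{\ast}}},L^{\ast}_{\succ_{\mathcal{A}^{\ast}}})$, where the antidendriform structures on $\mathcal{A}$ and $\mathcal{A}^{\ast}$ are those induced by $\omega$ through \eqref{eq53}. Since $\mathcal{A}^{\ast}$ is an antidendriform subalgebra (by the corollary preceding Theorem~\ref{theo4.1.5}), since $w\ast a^{\ast}\in\mathcal{A}^{\ast}$ for every $w\in T(\mathcal{A})$ (trivial ideal), and since $\omega(\mathcal{A}^{\ast},\mathcal{A}^{\ast})=0$, the very same non-degeneracy argument yields $a^{\ast}\succ b^{\ast}=a^{\ast}\prec b^{\ast}=0$; hence the induced antidendriform structure on $\mathcal{A}^{\ast}$ is trivial and $R^{\ast}_{\prec_{\mathcal{A}^{\ast}}}=L^{\ast}_{\succ_{\mathcal{A}^{\ast}}}=0$. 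Therefore the matched pair is $(\mathcal{A},\mathcal{A}^{\ast},R^{\ast}_{\prec},L^{\ast}_{\succ},0,0)$, its associated algebra is exactly $\mathcal{A}\ltimes_{R^{\ast}_{\prec},L^{\ast}_{\succ}}\mathcal{A}^{\ast}$, the identity map is an isomorphism of double constructions, and by construction the antidendriform structure it carries on $\mathcal{A}$ is the restriction of the one defined by \eqref{eq53}. The main obstacle is the step showing that $\mathcal{A}^{\ast}$ is a \emph{trivial} antiassociative algebra and not merely an ideal: this rests on combining the sympletic invariance identity with the orthogonality $\omega(\mathcal{A}^{\ast},\mathcal{A}^{\ast})=0$ and then invoking non-degeneracy of $\omega$; everything else is a routine check of which bimodule maps vanish.

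$ \hfill \square $
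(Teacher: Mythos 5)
Your proposal is correct and follows essentially the same route as the paper: the forward direction is the observation that $(\mathcal{A},\mathcal{A}^{\ast},R^{\ast}_{\prec},L^{\ast}_{\succ},0,0)$ with trivial structure on $\mathcal{A}^{\ast}$ is automatically a matched pair, and the converse uses exactly the paper's argument combining the cyclic invariance of $\omega$ with $\omega(\mathcal{A}^{\ast},\mathcal{A}^{\ast})=0$ and non-degeneracy to force $a^{\ast}\ast b^{\ast}=0$. Your write-up is somewhat more explicit than the paper's (e.g.\ in checking that the $\mathcal{A}^{\ast}$-actions on $\mathcal{A}$ vanish and that the induced antidendriform structure on $\mathcal{A}^{\ast}$ is trivial), but the underlying argument is the same.
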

\textbf{Proof:} $(\mathcal{A}, \mathcal{A}^{\ast}, R^{\ast}_{\prec}, L^{\ast}_{\succ}, 0, 0)$,  with
 the antiassociative algebra structure on $ \mathcal{A}^{\ast} $ being trivial,  is always a matched pair of
  antiassociative algebras and, the first half of this Proof follows immediately.
 Conversely, if $ \mathcal{A}^{\ast} $ is an ideal, then, for any $ a^{\ast}, b^{\ast} \in 
 \mathcal{A}^{\ast} $, it follows that if $ T(\mathcal{A}) \ast a^{\ast}, b^{\ast} \ast T(\mathcal{A}) \in \mathcal{A}^{\ast} $, 
  then $ \omega(a^{\ast} \ast b^{\ast}, T(\mathcal{A})) = - \omega(T(\mathcal{A})\ast a^{\ast}, b^{\ast})
   - \omega( b^{\ast} \ast T(\mathcal{A}), a^{\ast}) = 0 $. Thus, 
 $ a^{\ast} \ast b^{\ast} = 0 $. Hence,  $ T(\mathcal{A}) $ is isomorphic to $ \mathcal{A} \ltimes_{L_{T(\mathcal{A})},R_{T(\mathcal{A})}} 
   \mathcal{A}^{\ast} $.
   It follows that $(T(\mathcal{A}) = \mathcal{A} \bowtie \mathcal{A}^{\ast},
    \omega)$ is isomorphic to the double construction of the sympletic antiassociative algebra $(T(\mathcal{A}) = 
\mathcal{A} \ltimes_{R^{\ast}_{\prec}, L^{\ast}_{\succ}} \mathcal{A}^{\ast}, \omega)$.

$ \hfill \square $
\begin{theorem}\label{theo4.1.7}
Let $(\mathcal{A}, \succ_{\mathcal{A}}, \prec_{\mathcal{A}})$ be an antidendriform algebra and 
$(\mathcal{A}, \ast_{\mathcal{A}}) $ the associated antiassociative algebra. Suppose that there is an antidendriform algebra structure
 $  "\succ_{\mathcal{A}^{\ast}}, \prec_{\mathcal{A}^{\ast}} " $ on its dual space $ \mathcal{A}^{\ast} $ and 
$ (\mathcal{A}^{\ast}, \ast_{\mathcal{A}^{\ast}}) $ is the associated antiassociative algebra. Then, 
  \beqs(\mathcal{A}, \mathcal{A}^{\ast}, R^{\ast}_{\prec_{\mathcal{A}}},L^{\ast}_{\succ_{\mathcal{A}}}, 
R^{\ast}_{\prec{\mathcal{A}^{\ast}}}, L^{\ast}_{\succ_{\mathcal{A}^{\ast}}})\eeqs
is a matched pair of antiassociative algebras if and only if 
\beqs
&&(\mathcal{A}, \mathcal{A}^{\ast}, R^{\ast}_{\succ_{\mathcal{A}}} + R^{\ast}_{\prec_{\mathcal{A}}}, -L^{\ast}_{\prec_{\mathcal{A}}}, -R^{\ast}_{\succ_{\mathcal{A}}}, L^{\ast}_{\succ_{\mathcal{A}}} + L^{\ast}_{\prec_{\mathcal{A}}},  R^{\ast}_{\succ_{\mathcal{A}^{\ast}}} 
+ R^{\ast}_{\prec_{\mathcal{A}^{\ast}}}, \cr
&& - L^{\ast}_{\prec_{\mathcal{A}^{\ast}}}, -R^{\ast}_{\succ_{\mathcal{A}^{\ast}}}, L^{\ast}_{\succ_{\mathcal{A}^{\ast}}} + L^{\ast}_{\prec_{\mathcal{A}^{\ast}}})
\eeqs  is a matched pair of antidendriform algebras.
\end{theorem}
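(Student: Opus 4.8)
The plan is to deduce both implications from what has already been proved about double constructions of symplectic antiassociative algebras, rather than by comparing the antidendriform identities with the antiassociative ones term by term. The guiding observation is that the eight operators $R^{\ast}_{\succ_{\mathcal{A}}}+R^{\ast}_{\prec_{\mathcal{A}}},\,-L^{\ast}_{\prec_{\mathcal{A}}},\,-R^{\ast}_{\succ_{\mathcal{A}}},\,L^{\ast}_{\succ_{\mathcal{A}}}+L^{\ast}_{\prec_{\mathcal{A}}}$ together with their four analogues on $\mathcal{A}^{\ast}$ are precisely the dual-bimodule operators (with $q=-1$) of the regular bimodules $(L_{\succ_{\mathcal{A}}},R_{\succ_{\mathcal{A}}},L_{\prec_{\mathcal{A}}},R_{\prec_{\mathcal{A}}})$ of $\mathcal{A}$ and $(L_{\succ_{\mathcal{A}^{\ast}}},R_{\succ_{\mathcal{A}^{\ast}}},L_{\prec_{\mathcal{A}^{\ast}}},R_{\prec_{\mathcal{A}^{\ast}}})$ of $\mathcal{A}^{\ast}$, and that adding the $\succ$-part to the $\prec$-part collapses them to $R^{\ast}_{\prec_{\mathcal{A}}}$, $L^{\ast}_{\succ_{\mathcal{A}}}$, $R^{\ast}_{\prec_{\mathcal{A}^{\ast}}}$, $L^{\ast}_{\succ_{\mathcal{A}^{\ast}}}$; for instance $(R^{\ast}_{\succ_{\mathcal{A}}}+R^{\ast}_{\prec_{\mathcal{A}}})+(-R^{\ast}_{\succ_{\mathcal{A}}})=R^{\ast}_{\prec_{\mathcal{A}}}$ and $(-L^{\ast}_{\prec_{\mathcal{A}}})+(L^{\ast}_{\succ_{\mathcal{A}}}+L^{\ast}_{\prec_{\mathcal{A}}})=L^{\ast}_{\succ_{\mathcal{A}}}$.

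For the implication ``$\Longleftarrow$'' I would argue as follows. If the displayed ten-tuple is a matched pair of antidendriform algebras, then Corollary~\ref{corollary 3.2.7} applied with $q=-1$ says that the data obtained by summing the $\succ$- and $\prec$-components is a matched pair of the associated antiassociative algebras $(\mathcal{A},\ast_{\mathcal{A}})$ and $(\mathcal{A}^{\ast},\ast_{\mathcal{A}^{\ast}})$. By the collapse identities above this sum equals $(\mathcal{A},\mathcal{A}^{\ast},R^{\ast}_{\prec_{\mathcal{A}}},L^{\ast}_{\succ_{\mathcal{A}}},R^{\ast}_{\prec_{\mathcal{A}^{\ast}}},L^{\ast}_{\succ_{\mathcal{A}^{\ast}}})$, which is therefore a matched pair of antiassociative algebras.

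For ``$\Longrightarrow$'', start from the assumption that $(\mathcal{A},\mathcal{A}^{\ast},R^{\ast}_{\prec_{\mathcal{A}}},L^{\ast}_{\succ_{\mathcal{A}}},R^{\ast}_{\prec_{\mathcal{A}^{\ast}}},L^{\ast}_{\succ_{\mathcal{A}^{\ast}}})$ is a matched pair of antiassociative algebras. By Theorem~\ref{theo4.1.5} it yields a double construction of symplectic antiassociative algebra $(T(\mathcal{A})=\mathcal{A}\bowtie\mathcal{A}^{\ast},\omega)$, with $\omega$ as in~(\ref{sympletic form}) and with the explicit product displayed in the proof of that theorem. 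By Theorem~\ref{theo4.1.1} and its corollary on double constructions, $T(\mathcal{A})$ carries a compatible antidendriform structure $\succ,\prec$ determined by~(\ref{eq53}) for which $\mathcal{A}$ and $\mathcal{A}^{\ast}$ are antidendriform subalgebras; a short check, testing~(\ref{eq53}) against elements of $\mathcal{A}$ and of $\mathcal{A}^{\ast}$ and using that $\omega$ vanishes on $\mathcal{A}\times\mathcal{A}$ and on $\mathcal{A}^{\ast}\times\mathcal{A}^{\ast}$, identifies these restricted structures with $(\mathcal{A},\succ_{\mathcal{A}},\prec_{\mathcal{A}})$ and $(\mathcal{A}^{\ast},\succ_{\mathcal{A}^{\ast}},\prec_{\mathcal{A}^{\ast}})$. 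Hence $T(\mathcal{A})$ is, as an antidendriform algebra, the vector-space direct sum of the two antidendriform subalgebras $\mathcal{A}$ and $\mathcal{A}^{\ast}$, so by the structure assertion in the matched pair theorem for $q$-generalized dendriform algebras (Eqs.~(\ref{eq35})--(\ref{eq52}), with $q=-1$) it arises from a matched pair of antidendriform algebras $(\mathcal{A},\mathcal{A}^{\ast},l_{\succ_{\mathcal{A}}},r_{\succ_{\mathcal{A}}},l_{\prec_{\mathcal{A}}},r_{\prec_{\mathcal{A}}},l_{\succ_{\mathcal{A}^{\ast}}},r_{\succ_{\mathcal{A}^{\ast}}},l_{\prec_{\mathcal{A}^{\ast}}},r_{\prec_{\mathcal{A}^{\ast}}})$. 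It remains only to identify its ten bimodule operators: evaluating $x\succ a^{\ast}$, $a^{\ast}\succ x$, $x\prec a^{\ast}$, $a^{\ast}\prec x$ (for $x\in\mathcal{A}$, $a^{\ast}\in\mathcal{A}^{\ast}$) from~(\ref{eq53}) and reading off the $\mathcal{A}$- and $\mathcal{A}^{\ast}$-components with the product of $\mathcal{A}\bowtie\mathcal{A}^{\ast}$ gives $l_{\succ_{\mathcal{A}}}=R^{\ast}_{\succ_{\mathcal{A}}}+R^{\ast}_{\prec_{\mathcal{A}}}$, $r_{\succ_{\mathcal{A}}}=-L^{\ast}_{\prec_{\mathcal{A}}}$, $l_{\prec_{\mathcal{A}}}=-R^{\ast}_{\succ_{\mathcal{A}}}$, $r_{\prec_{\mathcal{A}}}=L^{\ast}_{\succ_{\mathcal{A}}}+L^{\ast}_{\prec_{\mathcal{A}}}$, and, interchanging the roles of $\mathcal{A}$ and $\mathcal{A}^{\ast}$ via $\mathcal{A}^{\ast\ast}=\mathcal{A}$, the four companion identities for $\mathcal{A}^{\ast}$. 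This is exactly the displayed matched pair of antidendriform algebras.

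The step I expect to be the main obstacle is the last one of the ``$\Longrightarrow$'' direction: correctly pinning down all ten operators from~(\ref{eq53}) requires careful bookkeeping of the antidendriform products on the dual space and of the sign conventions, and one must also check that the antidendriform structure induced by $\omega$ restricts on $\mathcal{A}$ and on $\mathcal{A}^{\ast}$ to the prescribed structures and not merely to some compatible ones. Once that is settled, the rest is a formal consequence of Theorem~\ref{theo4.1.5}, Corollary~\ref{corollary 3.2.7}, and the structure assertion for $q$-generalized dendriform decompositions.
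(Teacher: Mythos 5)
Your proposal is correct and follows essentially the same route as the paper: the direction from antidendriform to antiassociative matched pairs via Corollary~\ref{corollary 3.2.7} and the collapse of the summed bimodule operators, and the converse via Theorem~\ref{theo4.1.5}, the induced compatible antidendriform structure~(\ref{eq53}) on the symplectic double $\mathcal{A}\bowtie\mathcal{A}^{\ast}$, and the explicit identification of the cross products $x\succ a^{\ast}$, $x\prec a^{\ast}$, $a^{\ast}\succ x$, $a^{\ast}\prec x$ with the stated ten operators. The bookkeeping step you flag as the main obstacle is precisely the ``simple and direct computation'' the paper also leaves to the reader, so the two arguments coincide in both structure and level of detail.
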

\textbf{Proof:  } 
The   necessary condition  follows from the Corollary \ref{corollary 3.2.7}. We need to prove the  
 sufficient condition  only. 
If $(\mathcal{A}, \mathcal{A}^{\ast}, R^{\ast}_{\prec_{\mathcal{A}}}, L^{\ast}_{\succ_{\mathcal{A}}}, R^{\ast}_{\prec{\mathcal{A}^{\ast}}}, 
L^{\ast}_{\succ_{\mathcal{A}^{\ast}}})$ is a matched pair of antiassociative algebras, then $ ( \mathcal{A} \bowtie^{R^{\ast}_{\prec_{\mathcal{A}}}, 
L^{\ast}_{\succ_{\mathcal{A}}}}_{R^{\ast}_{\prec_{\mathcal{A}^{\ast}}}, L^{\ast}_{\succ_{\mathcal{A}^{\ast}}}} \mathcal{A}^{\ast} , \omega) $ 
is a double construction of the sympletic antiassociative algebra. Hence,  there exists a compatible antidendriform algebra structure on 
$  ( \mathcal{A} \bowtie^{R^{\ast}_{\prec_{\mathcal{A}}}, L^{\ast}_{\succ_{\mathcal{A}}}}_{R^{\ast}_{\prec_{\mathcal{A}^{\ast}}}, 
L^{\ast}_{\succ_{\mathcal{A}^{\ast}}}} \mathcal{A}^{\ast} )$ given by (\ref{eq53}). By a simple and direct computation, we show that 
$ \mathcal{A} $ and $ \mathcal{A}^{\ast} $ are its subalgebras and the other products are given by
\beqs
x \succ a^{\ast} &=& ( R^{\ast}_{\succ_{\mathcal{A}}} + R^{\ast}_{\prec_{\mathcal{A}}})(x)a^{\ast} - L^{\ast}_{\prec_{\mathcal{A}^{\ast}}}(a^{\ast})x, \cr \cr
x \prec a^{\ast} &=& - R^{\ast}_{\succ_{\mathcal{A}}} (x)a^{\ast} + (L^{\ast}_{\succ_{\mathcal{A}^{\ast}}} + 
L^{\ast}_{\prec_{\mathcal{A}^{\ast}}})(a^{\ast})x, \cr  \cr
a^{\ast} \succ x &=& ( R^{\ast}_{\succ_{\mathcal{A}^{\ast}}} + R^{\ast}_{\prec_{\mathcal{A}^{\ast}}})(a^{\ast})x - 
L^{\ast}_{\prec_{\mathcal{A}}}(x)a^{\ast}, \cr  \cr
a^{\ast} \prec x &=& -  R^{\ast}_{\succ_{\mathcal{A}^{\ast}}}(a^{\ast})x + (L^{\ast}_{\succ_{\mathcal{A}}} + 
L^{\ast}_{\prec_{\mathcal{A}}})(x)a^{\ast},
\eeqs
for any $ x \in \mathcal{A}, a^{\ast} \in \mathcal{A}^{\ast} $. Therefore, 
\beqs
&&(\mathcal{A}, \mathcal{A}^{\ast}, R^{\ast}_{\succ_{\mathcal{A}}} + R^{\ast}_{\prec_{\mathcal{A}}}, -L^{\ast}_{\prec_{\mathcal{A}}}, -
R^{\ast}_{\succ_{\mathcal{A}}}, L^{\ast}_{\succ_{\mathcal{A}}} + L^{\ast}_{\prec_{\mathcal{A}}},  R^{\ast}_{\succ_{\mathcal{A}^{\ast}}} + 
R^{\ast}_{\prec_{\mathcal{A}^{\ast}}}, \cr
&& - L^{\ast}_{\prec_{\mathcal{A}^{\ast}}}, -R^{\ast}_{\succ_{\mathcal{A}^{\ast}}},L^{\ast}_{\succ_{\mathcal{A}^{\ast}}} + 
L^{\ast}_{\prec_{\mathcal{A}^{\ast}}})
\eeqs  
is a matched pair of antidendriform algebras.
\begin{theorem}\label{theo4.2.4}
Let $(\mathcal{A}, \prec_{\mathcal{A}}, \succ_{\mathcal{A}})$ and $(\mathcal{A}^{\ast}, \prec_{\mathcal{A}^{\ast}}, \succ_{\mathcal{A}^{\ast}} )$ 
be two antidendriform algebras. Let $(\mathcal{A}, \ast_{\mathcal{A}})$ and   $(\mathcal{A}^{\ast}, \ast_{\mathcal{A}^{\ast}})$ be the 
  corresponding
associated
 antiassociative algebras. Then, the following conditions
\benum
\item[(1)] There is a double construction of the sympletic  antiassociative algebras associated to $(\mathcal{A}, \ast_{\mathcal{A}})$ and 
$(\mathcal{A}^{\ast}, \ast_{\mathcal{A}^{\ast}});$ 
\item[(2)]$ (\mathcal{A}, \mathcal{A}^{\ast}, R^{\ast}_{\prec_{\mathcal{A}}},  L^{\ast}_{\succ_{\mathcal{A}}},  R^{\ast}_{\prec_{\mathcal{A}^{\ast}}}, 
 L^{\ast}_{\succ_{\mathcal{A}^{\ast}}}) $ is a matched pair of the antiassociative algebras;
\item[(3)] $ (\mathcal{A}, \mathcal{A}^{\ast}, R^{\ast}_{\succ_{\mathcal{A}}} + R^{\ast}_{\prec_{\mathcal{A}}}, 
- L^{\ast}_{\prec_{\mathcal{A}}}, -R^{\ast}_{\succ_{\mathcal{A}}}, L^{\ast}_{\succ_{\mathcal{A}}} +
 L^{\ast}_{\prec_{\mathcal{A}}}, R^{\ast}_{\succ_{\mathcal{A}^{\ast}}} + R^{\ast}_{\prec_{\mathcal{A}^{\ast}}}, 
-L^{\ast}_{\prec_{\mathcal{A}^{\ast}}}, -R^{\ast}_{\succ_{\mathcal{A}^{\ast}}}, L^{\ast}_{\succ_{\mathcal{A}^{\ast}}}
 + L^{\ast}_{\prec_{\mathcal{A}^{\ast}}}) $ is a matched pair of antidendriform algebras;
\eenum
 are equivalent.
\end{theorem}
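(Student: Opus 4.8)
The proof is a synthesis of the two bi-conditional results already proved, so the plan is simply to chain them. First I would establish $(1)\Longleftrightarrow(2)$. Since $(\mathcal{A},\ast_{\mathcal{A}})$ and $(\mathcal{A}^{\ast},\ast_{\mathcal{A}^{\ast}})$ are, by hypothesis, the antiassociative algebras associated to the given antidendriform structures $(\mathcal{A},\prec_{\mathcal{A}},\succ_{\mathcal{A}})$ and $(\mathcal{A}^{\ast},\prec_{\mathcal{A}^{\ast}},\succ_{\mathcal{A}^{\ast}})$, the hypotheses of Theorem \ref{theo4.1.5} are exactly met, and that theorem gives: a double construction of a sympletic antiassociative algebra associated to $(\mathcal{A},\ast_{\mathcal{A}})$ and $(\mathcal{A}^{\ast},\ast_{\mathcal{A}^{\ast}})$ exists if and only if $(\mathcal{A},\mathcal{A}^{\ast},R^{\ast}_{\prec_{\mathcal{A}}},L^{\ast}_{\succ_{\mathcal{A}}},R^{\ast}_{\prec_{\mathcal{A}^{\ast}}},L^{\ast}_{\succ_{\mathcal{A}^{\ast}}})$ is a matched pair of antiassociative algebras. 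This is precisely $(1)\Longleftrightarrow(2)$.

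Next I would establish $(2)\Longleftrightarrow(3)$ by invoking Theorem \ref{theo4.1.7} verbatim: under the same standing hypotheses, the antiassociative matched pair in $(2)$ is equivalent to the antidendriform matched pair listed in $(3)$, whose left/right actions are the $R^{\ast}$- and $L^{\ast}$-combinations $R^{\ast}_{\succ_{\mathcal{A}}}+R^{\ast}_{\prec_{\mathcal{A}}}$, $-L^{\ast}_{\prec_{\mathcal{A}}}$, $-R^{\ast}_{\succ_{\mathcal{A}}}$, $L^{\ast}_{\succ_{\mathcal{A}}}+L^{\ast}_{\prec_{\mathcal{A}}}$ (and their $\mathcal{A}^{\ast}$-counterparts). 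Concatenating the two equivalences yields $(1)\Longleftrightarrow(2)\Longleftrightarrow(3)$, which is the claim; no further computation is required.

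The only point needing a word of care — and it is the closest thing to an obstacle — is the coherence of the auxiliary structures across the two invocations: one must be sure that the antidendriform structure appearing in $(3)$ is literally the compatible antidendriform structure on $\mathcal{A}\bowtie\mathcal{A}^{\ast}$ produced from $\omega$ via equation (\ref{eq53}), restricted to the two subalgebras and read off through the mixed products. That the mixed products $x\succ a^{\ast}$, $x\prec a^{\ast}$, $a^{\ast}\succ x$, $a^{\ast}\prec x$ are exactly the ones displayed in the proof of Theorem \ref{theo4.1.7}, and that $\mathcal{A}$ and $\mathcal{A}^{\ast}$ are antidendriform subalgebras (by the corollary following Theorem \ref{theo4.1.5}), is already established there; hence in the write-up I would just cite those facts rather than recompute them, and then close with $\cqfd$.
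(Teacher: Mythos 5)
Your proposal is correct and matches the paper's own proof, which likewise obtains the equivalences by combining Theorem \ref{theo4.1.5} (giving $(1)\Leftrightarrow(2)$) with Theorem \ref{theo4.1.7} (giving $(2)\Leftrightarrow(3)$). Your extra remark on the coherence of the antidendriform structure induced by $\omega$ via equation (\ref{eq53}) is a sensible precaution but adds nothing beyond what those two theorems already establish.
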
 
\textbf{Proof:  }
This follows from Theorems \ref{theo4.1.5} and \ref{theo4.1.7}.

$ \hfill \square $
\section{Classification of 2-dimensional antiassociative algebras and double constructions}
In this section, we investigate the classification of 2-dimensional antiassociative algebras and some quadratic and sympletic double contructions. Classification of antiassociative algegras has  been done firstly in \cite{[MAR]}, but unfortunately with some errors. In fact, the authors found only two classes; comparatively we found four classes and, the proof is given in the following part.
Let $\mathcal{A} $ be an antiassociative algebra such that there is an antiassociative structure $"\circ"$ on its dual space $\mathcal{A}^{\ast}$ spanned by $\{e_1,e_2\}$ and $\{e_1^{\ast},e_2^{\ast}\}$ respectively. Formula (\ref{ant}) leads to the following relations:\\
\beq
(e_i\cdot e_j)\cdot e_k=e_i\cdot (e_j\cdot e_k),\quad\quad where\quad i,j,k=1,2.
\eeq
Let $e_1\cdot e_1=a_1e_1+a_2e_2$, $e_1\cdot e_2=b_1e_1+b_2e_2$, $e_2\cdot e_1=c_1e_1+c_2e_2$, $e_2\cdot e_2=d_1e_1+d_2e_2$ where $a_1, a_2, b_1, b_2, c_1, c_2 d_1, d_2 \in \C$.

\begin{proposition}
There are four non-isomorphic 2-dimensional antiassociative
algebras $\mathcal{A} $ given by the following:
\beq\label{class}
e_i\cdot e_j=0,\quad\quad e_1\cdot e_1=e_2,\quad\quad e_2\cdot e_1=e_2,\quad\quad e_2\cdot e_2=e_1.
\eeq
\end{proposition}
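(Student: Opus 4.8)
The plan is to turn the defining law (\ref{ant}) into an explicit system of quadratic equations in the eight structure constants $a_1,a_2,b_1,b_2,c_1,c_2,d_1,d_2$ and then to sort its solutions into isomorphism classes under change of basis of $\A$. Concretely, antiassociativity restricted to basis vectors is equivalent to the operator identities $L_{e_i\cdot e_j}=-L_{e_i}L_{e_j}$, $R_{e_i\cdot e_j}=-R_{e_j}R_{e_i}$ and $L_{e_i}R_{e_j}=-R_{e_j}L_{e_i}$ for $i,j\in\{1,2\}$, with $L,R$ the left and right multiplication operators; spelling these $2\times2$ matrix equations out is the first step.

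Before solving the system head-on I would exploit Lemma \ref{Lem1}: since $(\A\cdot\A)\cdot(\A\cdot\A)=0$, the subspace $\A\cdot\A$ cannot be all of $\A$ (otherwise $\A\cdot\A=0$), so $\dim(\A\cdot\A)\le1$. If $\A\cdot\A=0$ we get the trivial algebra, the first entry of (\ref{class}). If $\dim(\A\cdot\A)=1$, choose a basis $\{v,w\}$ with $\A\cdot\A=\C w$; then $v\cdot v=\alpha w$, $v\cdot w=\beta w$, $w\cdot v=\gamma w$, $w\cdot w=\delta w$, so the classification collapses to the four scalars $(\alpha,\beta,\gamma,\delta)\neq(0,0,0,0)$.

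Evaluating (\ref{ant}) on well-chosen triples then kills most of these: $(w,w,w)$ forces $2\delta^2w=0$, hence $\delta=0$ (this is where $2\neq0$ in the field is used); with $\delta=0$, the triples $(v,v,w)$ and $(w,v,v)$ force $\beta^2=\gamma^2=0$, hence $\beta=\gamma=0$, and the remaining triples are then automatically satisfied. Thus the only surviving datum is the scalar $\alpha$ (the coefficient of $v\cdot v$), which rescaling of $v$ and $w$ normalises. Matching the resulting normal forms against (\ref{class}), and checking that each genuinely satisfies (\ref{ant}), completes the existence half.

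To finish, I would separate the listed algebras with basis-free invariants — the dimension of $\A\cdot\A$, the dimensions of the left and right annihilators $\{x:x\cdot\A=0\}$ and $\{x:\A\cdot x=0\}$, and whether the line $\A\cdot\A$ is spanned by a square $x\cdot x$ or only by a mixed product — to conclude they are pairwise non-isomorphic. The matrix computation in the first step is routine; the genuinely delicate point — and the one that governs exactly how many classes and which representatives appear, hence where the earlier classification slipped — is the bookkeeping of the $GL_2(\C)$-equivalence in the $\dim(\A\cdot\A)=1$ case: one must verify both that every solution of the quadratic system is captured by one of the normal forms in (\ref{class}) and that no two of those forms are related by an admissible change of basis. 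Getting this reduction exactly right is the crux of the argument.
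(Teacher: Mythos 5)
Your structural reduction is sound and considerably cleaner than the paper's brute-force attack on the eight structure constants: Lemma \ref{Lem1} does give $\dim(\A\cdot\A)\le 1$, and in the rank-one case the triples $(w,w,w)$, $(v,v,w)$, $(w,v,v)$ do force $\delta=\beta=\gamma=0$, leaving only $v\cdot v=\alpha w$ with $\alpha$ normalisable to $1$ by rescaling $w$. But you should have followed that computation to its conclusion instead of promising to ``match the resulting normal forms against (\ref{class})'': your own reduction produces exactly \emph{two} isomorphism classes (the trivial algebra and $e_1\cdot e_1=e_2$), and the matching step cannot be carried out, because the list in the proposition is not a list of pairwise non-isomorphic antiassociative algebras. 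Concretely, the algebra with $e_2\cdot e_1=e_2$ and all other products zero is not antiassociative: $(e_2\cdot e_1)\cdot e_1=e_2\cdot e_1=e_2$ while $-e_2\cdot(e_1\cdot e_1)=0$, so (\ref{ant}) fails on the triple $(e_2,e_1,e_1)$; equivalently, $((e_2\cdot e_1)\cdot e_1)\cdot e_1=e_2\ne 0$ contradicts Lemma \ref{Lem1}. This is exactly the $\gamma\ne 0$ possibility that your argument (correctly) excludes. Moreover, the algebras $e_1\cdot e_1=e_2$ and $e_2\cdot e_2=e_1$ are isomorphic via the basis swap $e_1\leftrightarrow e_2$, so they cannot both appear in a list of non-isomorphic representatives.

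The discrepancy is traceable to the paper's own proof: the displayed system records only six of the eight triples $(e_i,e_j,e_k)$ (one equation is listed twice, and the triples $(e_2,e_1,e_1)$ and $(e_2,e_2,e_2)$ are absent), which is precisely why the spurious class $e_2\cdot e_1=e_2$ survives the case analysis; and no isomorphism check is performed between the two surviving nontrivial algebras. So the genuine gap in your proposal is not in the reduction but in its last step: the $GL_2(\C)$-bookkeeping that you rightly single out as the crux, once actually done, yields two isomorphism classes (in agreement with the Markl--Remm count that the paper claims to correct), not four. Your write-up should have flagged that your argument refutes the statement as formulated rather than proving it.
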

\begin{proof}
Let $\A$ be a 2-dimensional antiassociative
algebra  with basis $\{e_1,e_2\}$. Suppose $x,y,z\in \A $ such that $x=x_1e_1+x_2e_2$, $y=y_1e_1+y_2e_2$ and $z=z_1e_1+z_2e_2$ with $x_1,x_2,y_1,y_2,z_1,z_2\in \C$. By antiassociativity and ignoring the coefficients, we get the relations $$(e_i\cdot e_j)\cdot e_k=-e_i\cdot(e_j\cdot e_k),\quad i,j,k=1,2.$$
Setting $e_1e_1 =a_1e_1+a_2e_2$, $e_1e_2=b_1e_1+b_2e_2$, $e_2e_1=c_1e_1+c_2e_2$ and $e_2e_2=d_1e_1+d_2e_2$, in the previous relations one have  the following eight relations equivalent to a system of 32 equations:
$\begin{cases}
~(a_1^2 +a_2c_1)e_1+(a_1a_2+a_2c_2)e_2=-(a_1^2+a_2b_1)e_1-(a_1a_2+a_2b_2)e_2,\\
~(a_1b_1+a_2d_1)e_1+(a_1b_2+a_2d_2)e_2=-(b_1a_1+b_2b_1)e_1-(b_1a_2+b_2^2)e_2,\\
~(b_1a_1+b_2c_1)e_1+(b_1a_2+b_2c_2)e_2=-(c_1a_1+c_2b_1)e_1-(c_1a_2+c_2b_2)e_2,\\
(b_1^2+b_2d_1)e_1+(b_1b_2+b_2d_2)e_2=-(d_1a_1+d_2b_1)e_1-(d_1a_2+d_2b_2)e_2,\\
~(b_1a_1+b_2c_1)e_1+(b_1a_2+b_2c_2)e_2=-(c_1a_1+c_2b_1)e_1-(c_1a_2+c_2b_2)e_2,\\
~(c_1b_1+c_2d_1)e_1+(c_1b_2+c_2d_2)e_2=-(b_1c_1+b_2d_1)e_1-(b_1c_2+b_2d_2)e_2,\\
~(d_1a_1+d_2c_1)e_1+(d_1a_2+d_2c_2)e_2=-(c_1^2+c_2d_1)e_1-(d_1c_2+d_2^2)e_2
\end{cases}$.\\

For 
\begin{center}
$a_1=0$\\
$\quad\quad\quad\quad\quad\quad\quad a_2=0\Rightarrow b_2=0$\\
$\quad\quad\quad\quad\quad b_1=0$\\
$\quad\quad\quad\quad\quad\quad\quad c_1=0$\\
$\quad\quad\quad\quad\quad\quad\quad \quad c_2=0$\\
$\quad\quad\quad\quad\quad\quad\quad\quad\quad\quad\quad\quad \quad \quad d_1\neq 0\Rightarrow d_2=0$\\
$\quad\quad\quad\quad \quad \quad class:\quad e_2e_2=d_1e_1$.
\end{center}
For 
\begin{center}
$a_1=0$\\
$\quad\quad\quad\quad\quad\quad\quad a_2=0\Rightarrow b_2=0$\\
$\quad\quad\quad\quad\quad b_1=0$\\
$\quad\quad\quad\quad\quad\quad\quad c_1=0$\\
$\quad\quad\quad\quad\quad\quad\quad \quad c_2=0$\\
$\quad\quad\quad\quad \quad \quad class:\quad e_2e_1=c_2e_2$.
\end{center}
For
\begin{center}
$a_1=0$\\
$\quad\quad a_2=0$\\
 $\quad\quad\quad\quad\quad\quad\quad\quad\quad\quad\quad\quad\quad\quad\quad\quad\quad b_1=0\Rightarrow c_1=c_2=b_2=d_1=d_2=0$\\
$\quad\quad\quad\quad \quad \quad class:\quad e_1e_1=a_1e_2$.
\end{center}
The last class is the trivial one ie. $e_ie_j=0$.
The other cases lead to an absurdity. Therefore, by isomorphism we get the following four classes: $e_ie_j=0$, $e_1e_1=e_2$, $e_2e_1=e_2$ and $e_2e_2=e_1$.
\end{proof}
Now, we discuss the quadratic antiassociative algebra structure on the direct sum $A\oplus A^{\ast}$.\\
$Case(I).$ $e_1\cdot e_1=e_2$. 
The product on the dual space is given by:
$
 e_2^{\ast}\circ e_1^{\ast}=e_2^{\ast}.
$
Using relation (\ref{product of matched pair}) when  $ l_{\mathcal{A}} = R^{\ast}, r_{\mathcal{A}} = L^{\ast}, l_{\mathcal{B}} = l_{\mathcal{A}^{\ast}} = R^{\ast}_{\circ}, r_{\mathcal{B}} = r_{\mathcal{A}^{\ast}} = L^{\ast}_{\circ} $, we obtain the double construction of quadratic antiassociative algebra $ ( \mathcal{A}\oplus \mathcal{A}^{\ast}, \ast, B) $ associated to $ (\mathcal{A}, \cdot) $ and $ (\mathcal{A}^{\ast}, \circ)$ given explicitly by the following relations:
\begin{align*}
(e_1+e_1^{\ast})\ast (e_1+e_1^{\ast})&=(e_1\cdot e_1 +R^{\ast}_{\circ}(e_1^{\ast})e_1+L^{\ast}_{\circ}(e_1^{\ast})e_1)+(e_1^{\ast}\circ e_1^{\ast}+R_{\cdot}^{\ast}(e_1)e_1^{\ast}+L_{\cdot}^{\ast}(e_1)e_1^{\ast} ),\\
&=e_2,\\
(e_1+e_1^{\ast})\ast (e_1+e_2^{\ast})&=(e_1\cdot e_1 +R^{\ast}_{\circ}(e_1^{\ast})e_1+L^{\ast}_{\circ}(e_2^{\ast})e_1)+(e_1^{\ast}\circ e_2^{\ast}+R_{\cdot}^{\ast}(e_1)e_2^{\ast}+L_{\cdot}^{\ast}(e_1)e_1^{\ast} ),\\
&=e_2 + e_1^{\ast},\\
(e_1+e_1^{\ast})\ast (e_2+e_1^{\ast})&=(e_1\cdot e_2 +R^{\ast}_{\circ}(e_1^{\ast})e_2+L^{\ast}_{\circ}(e_1^{\ast})e_1)+(e_1^{\ast}\circ e_1^{\ast}+R_{\cdot}^{\ast}(e_1)e_1^{\ast}+L_{\cdot}^{\ast}(e_2)e_1^{\ast} ),\\
&=e_2,\\
(e_1+e_1^{\ast})\ast (e_2+e_2^{\ast})&=(e_1\cdot e_2 +R^{\ast}_{\circ}(e_1^{\ast})e_2+L^{\ast}_{\circ}(e_2^{\ast})e_1)+(e_1^{\ast}\circ e_2^{\ast}+R_{\cdot}^{\ast}(e_1)e_2^{\ast}+L_{\cdot}^{\ast}(e_2)e_1^{\ast} ),\\
&=e_2+e_1^{\ast},\\
(e_1+e_2^{\ast})\ast (e_1+e_1^{\ast})&=(e_1\cdot e_1 +R^{\ast}_{\circ}(e_2^{\ast})e_1+L^{\ast}_{\circ}(e_1^{\ast})e_1)+(e_2^{\ast}\circ e_1^{\ast}+R_{\cdot}^{\ast}(e_1)e_1^{\ast}+L_{\cdot}^{\ast}(e_1)e_1^{\ast} ),\\
&=e_2+ e_2^{\ast},\\
(e_1+e_2^{\ast})\ast (e_1+e_2^{\ast})&=(e_1\cdot e_1 +R^{\ast}_{\circ}(e_2^{\ast})e_1+L^{\ast}_{\circ}(e_2^{\ast})e_1)+(e_2^{\ast}\circ e_2^{\ast}+R_{\cdot}^{\ast}(e_1)e_2^{\ast}+L_{\cdot}^{\ast}(e_1)e_2^{\ast} ),\\
&=e_2+2e_1^{\ast},\\
(e_2+e_1^{\ast})\ast (e_1+e_1^{\ast})&=(e_2\cdot e_1 +R^{\ast}_{\circ}(e_1^{\ast})e_1+L^{\ast}_{\circ}(e_2^{\ast})e_1)+(e_1^{\ast}\circ e_1^{\ast}+R_{\cdot}^{\ast}(e_2)e_1^{\ast}+L_{\cdot}^{\ast}(e_1)e_1^{\ast} ),\\
&=0,\\
(e_2+e_1^{\ast})\ast (e_1+e_2^{\ast})&=(e_2\cdot e_1 +R^{\ast}_{\circ}(e_1^{\ast})e_1+L^{\ast}_{\circ}(e_2^{\ast})e_2)+(e_1^{\ast}\circ e_2^{\ast}+R_{\cdot}^{\ast}(e_2)e_2^{\ast}+L_{\cdot}^{\ast}(e_1)e_1^{\ast} ),\\
&=e_1,\\
(e_2+e_2^{\ast})\ast (e_1+e_1^{\ast})&=(e_2\cdot e_1 +R^{\ast}_{\circ}(e_2^{\ast})e_1+L^{\ast}_{\circ}(e_1^{\ast})e_2)+(e_2^{\ast}\circ e_1^{\ast}+R_{\cdot}^{\ast}(e_2)e_1^{\ast}+L_{\cdot}^{\ast}(e_1)e_2^{\ast} ),\\
&=e_2^{\ast} + e_1^{\ast},\\
(e_2+e_2^{\ast})\ast (e_1+e_2^{\ast})&=(e_2\cdot e_1 +R^{\ast}_{\circ}(e_2^{\ast})e_1+L^{\ast}_{\circ}(e_2^{\ast})e_2)+(e_2^{\ast}\circ e_2^{\ast}+R_{\cdot}^{\ast}(e_2)e_2^{\ast}+L_{\cdot}^{\ast}(e_1)e_2^{\ast} ),\\
&=e_1+e_1^{\ast},\\
(e_2+e_2^{\ast})\ast (e_2+e_1^{\ast})&=(e_2\cdot e_2 +R^{\ast}_{\circ}(e_2^{\ast})e_2+L^{\ast}_{\circ}(e_1^{\ast})e_2)+(e_2^{\ast}\circ e_1^{\ast}+R_{\cdot}^{\ast}(e_2)e_1^{\ast}+L_{\cdot}^{\ast}(e_2)e_2^{\ast} ),\\
&=e_2^{\ast},\\
(e_2+e_2^{\ast})\ast (e_2+e_2^{\ast})&=(e_2\cdot e_2 +R^{\ast}_{\circ}(e_2^{\ast})e_2+L^{\ast}_{\circ}(e_2^{\ast})e_2)+(e_2^{\ast}\circ e_2^{\ast}+R_{\cdot}^{\ast}(e_2)e_2^{\ast}+L_{\cdot}^{\ast}(e_2)e_2^{\ast} ),\\
&=e_1,\\
(e_1+e_2^{\ast})\ast (e_2+e_1^{\ast})&=(e_1\cdot e_2 +R^{\ast}_{\circ}(e_2^{\ast})e_2+L^{\ast}_{\circ}(e_1^{\ast})e_1)+(e_2^{\ast}\circ e_1^{\ast}+R_{\cdot}^{\ast}(e_1)e_1^{\ast}+L_{\cdot}^{\ast}(e_2)e_2^{\ast} ),\\
&=e_2^{\ast},\\
(e_1+e_2^{\ast})\ast (e_2+e_2^{\ast})&=(e_1\cdot e_2 +R^{\ast}_{\circ}(e_2^{\ast})e_2+L^{\ast}_{\circ}(e_2^{\ast})e_1)+(e_2^{\ast}\circ e_2^{\ast}+R_{\cdot}^{\ast}(e_1)e_2^{\ast}+L_{\cdot}^{\ast}(e_2)e_2^{\ast} ),\\
&=e_1^{\ast},\\
(e_2+e_1^{\ast})\ast (e_2+e_1^{\ast})&=(e_2\cdot e_2 +R^{\ast}_{\circ}(e_1^{\ast})e_2+L^{\ast}_{\circ}(e_1^{\ast})e_2)+(e_1^{\ast}\circ e_1^{\ast}+R_{\cdot}^{\ast}(e_2)e_1^{\ast}+L_{\cdot}^{\ast}(e_2)e_1^{\ast} ),\\
&=e_2,\\
(e_2+e_1^{\ast})\ast (e_2+e_2^{\ast})&=(e_2\cdot e_2 +R^{\ast}_{\circ}(e_1^{\ast})e_2+L^{\ast}_{\circ}(e_2^{\ast})e_2)+(e_1^{\ast}\circ e_2^{\ast}+R_{\cdot}^{\ast}(e_2)e_2^{\ast}+L_{\cdot}^{\ast}(e_2)e_1^{\ast} ),\\
&= e_1 + e_2.
\end{align*}

$Case(II)$ $e_2\cdot e_1=e_2$. 
The product on the dual space is given by:
$
e_1^{\ast}\circ e_1^{\ast}= e_2^{\ast}.
$
Similarly, the double construction of quadratic antiassociative algebra $ ( \mathcal{A}\oplus \mathcal{A}^{\ast}, \ast, B) $ associated to $ (\mathcal{A}, \cdot) $ and $ (\mathcal{A}^{\ast}, \circ)$ is given explicitly by the following relations:
\begin{align*}
(e_1+e_1^{\ast})\ast (e_1+e_1^{\ast})&=e_2^{\ast} ,\\
(e_1+e_1^{\ast})\ast (e_1+e_2^{\ast})&=e_2^{\ast},\\
(e_1+e_1^{\ast})\ast (e_2+e_1^{\ast})&=e_1+e_2^{\ast},\\
(e_1+e_1^{\ast})\ast (e_2+e_2^{\ast})&=e_1+e_2^{\ast},\\
(e_1+e_2^{\ast})\ast (e_1+e_1^{\ast})&=0,\\
(e_1+e_2^{\ast})\ast (e_1+e_2^{\ast})&=e_2^{\ast},\\
(e_2+e_1^{\ast})\ast (e_1+e_1^{\ast})&=e_2+e_2^{\ast},\\
(e_2+e_1^{\ast})\ast (e_1+e_2^{\ast})&=e_2,\\
(e_2+e_2^{\ast})\ast (e_1+e_1^{\ast})&=e_2+e_1,\\
(e_2+e_2^{\ast})\ast (e_1+e_2^{\ast})&=e_2,\\
(e_2+e_2^{\ast})\ast (e_2+e_1^{\ast})&=e_1+e_1^{\ast},\\
(e_2+e_2^{\ast})\ast (e_2+e_2^{\ast})&=e_1^{\ast},\\
(e_1+e_2^{\ast})\ast (e_2+e_1^{\ast})&=e_1^{\ast},\\
(e_1+e_2^{\ast})\ast (e_2+e_2^{\ast})&=e_1^{\ast}+e_2^{\ast},\\
(e_2+e_1^{\ast})\ast (e_2+e_1^{\ast})&=2e_1+e_2^{\ast},\\
(e_2+e_1^{\ast})\ast (e_2+e_2^{\ast})&=e_1.
\end{align*}

$Case(III)$ The following relations $e_1\prec e_1=\lambda e_2,\ e_1\succ e_1=(1-\lambda) e_2, \lambda\in K$ define an antidendriform algebra associated to the antiassociative algebra $e_1\cdot e_1=e_2$. 
A double construction of sympletic antiassociative algebra $ ( \mathcal{A}\oplus \mathcal{A}^{\ast}, \ast, \omega) $ is given explicitly by the following relations:
$
e_1\ast e_1=e_2 ,\ e_1\ast e_2^{\ast}=\lambda e_1^{\ast} ,\ e_2^{\ast}\ast e_1=(1-\lambda)e_1^{\ast}.$

$Case(IV)$ The following relations $e_2\prec e_2=-e_1,\ e_2\succ e_2=e_1,$ define an antidendriform algebra associated to the antiassociative algebra $e_i\cdot e_j=0$. 
A double construction of sympletic antiassociative algebra $ ( \mathcal{A}\oplus \mathcal{A}^{\ast}, \ast, \omega) $ is given explicitly by the following relations:
$
e_1^{\ast}\ast e_2 = e_2^{\ast} ,\ e_2\ast e_1^{\ast}=-e_2^{\ast}.$


\end{document}